\NewDocumentCommand\SAdS{s}{(Schwarz\-schild-\nolinebreak)\nolinebreak Anti-de Sitter\IfBooleanTF{#1}\relax\xspace}
\newenvironment{itemizeequation}%
{\par\noindent\minipage[r]{\textwidth-3.5em}\csname equation*\endcsname}%
{\csname endequation*\endcsname\endminipage\par\noindent}
\newtheoremstyle{mytheorem}{3pt}{}{\itshape}{}{\bfseries}{\nopagebreak\newline}{.5em}{}
\newtheoremstyle{mydefinition}{3pt}{}{}{}{\bfseries}{\nopagebreak\newline}{.5em}{}
\gdef\mytheorem{\theoremstyle{mytheorem}}
\gdef\mydefinition{\theoremstyle{mydefinition}}
\gdef\mytheoremcounter{theorem}
\gdef\pretomacro#1#2{\bgroup%
 \def\pretomacro@tmp{#1}%
 \expandafter\expandafter\expandafter\def%
 \expandafter\expandafter\expandafter\pretomacro@tmp%
 \expandafter\expandafter\expandafter{%
 \expandafter\pretomacro@tmp#2}%
 \global\let#2\pretomacro@tmp\egroup}
\gdef\cspretomacro#1#2{\bgroup%
 \expandafter\let\expandafter\cspretomacro@tmp\csname#2\endcsname%
 \pretomacro{#1}\cspretomacro@tmp%
 \expandafter\global\expandafter\let\csname #2\endcsname\cspretomacro@tmp%
 \undef\cspretomacro@tmp\egroup}
\gdef\@thlabelnew#1#2{%
 \expandafter\gdef\csname @thlabel@#1\endcsname{#2~\ref{#1}}%
 \expandafter\gdef\csname @thlabel@#1*\endcsname{#2~\ref{#1} (\vpageref*{#1})}}
\gdef\labelth{\expandafter\@th@label@snd\expandafter{\lasttheoremType}}
\gdef\@th@label@snd#1#2{\label{#2}\protected@write\@auxout{}{\string\@thlabelnew{#2}{#1}}}
\gdef\@thlabel@IfLaTeXLabelNotExistsTF#1{%
 \expandafter\ifx\csname r@#1\endcsname\relax%
 \expandafter\@firstoftwo\else\expandafter\@secondoftwo\fi}%
\gdef\@thlabel@IfLabelNotExistsTF#1{%
 \expandafter\ifx\csname @thlabel@#1\endcsname\relax%
 \expandafter\@firstoftwo\else\expandafter\@secondoftwo\fi}%
\NewDocumentCommand\refth{sm}{%
 \@thlabel@IfLabelNotExistsTF{#2}%
 {{\normalfont\@thlabel@IfLaTeXLabelNotExistsTF{#2}%
  {\textbf{[??]~(??)}\message{LaTeX Warning: Undefined theorem label: #2 on page \thepage}}%
  {\textbf{[??]}~\ref{#1}\message{LaTeX Warning: Referenced on theorem #2 on page \thepage which was labeled with label instead of labelth}}}}%
 {\IfBooleanTF{#1}{\csname @thlabel@#2*\endcsname}{\csname @thlabel@#2\endcsname}}}
\NewDocumentCommand\Newtheorem{oomomo}
{\IfValueTF{#4}%
 {\IfValueTF{#6}{\newtheorem{#3}[#4]{#5}[#6]}{\newtheorem{#3}[#4]{#5}}}%
 {\IfValueTF{#6}{\newtheorem{#3}{#5}[#6]}{\newtheorem{#3}{#5}}}%
 \cspretomacro{%
  \global\edef\lasttheoremtype{\IfValueTF{#1}{#1}{#5}}
	\global\edef\lasttheoremType{\IfValueTF{#2}{#2}{#5}}}{#3}}
\theoremstyle{remark}
\NewDocumentEnvironment{Proof}{d<>o}{%
 \IfValueTF{#1}{\proof[\IfValueTF{#2}{#2}{Proof}~of~\refth{#1}]}%$
 {\IfValueTF{#2}{\proof[#2]}\proof}%
 \def\paragraph##1{\stepcounter{paragraph}\par\medskip\par\noindent%
  {\normalfont\emph{\IfValueTF{#1}{\IfValueTF{#2}{#2}{Proof}~of~\refth{#1},\ }\relax Step~\arabic{paragraph}~\hbox{$(\!$##1$)$\normalfont:}\ }}}%
 \setcounter{paragraph}0}
{\endproof}
\NewDocumentCommand\indexsmall{m}{\@ifnextchar\egroup{#1}{\indexsmallsnd{#1}}}
\NewDocumentCommand\indexsmallsnd{mg}{\IfValueTF{#2}{#1{#2}}{\indexsmallthd{#1}}}
\NewDocumentCommand\indexsmallthd{mm}{\expandafter\indexsmallfth\expandafter#1#2}
\NewDocumentCommand\indexsmallfth{mg}{\IfValueTF{#2}{#1{#2}}{\indexsmallfith{#1}}}
\NewDocumentCommand\indexsmallfith{mm}{#1\ifx\indexsmall#2\hspace{-.075em}\fi#2}
\gdef\ii{\indexsmall{I}}\gdef\ij{\indexsmall{J}}
\gdef\oi{i}
\gdef\ui{\alpha}
\NewDocumentCommand\outsymbolsmall{om}{\bar{\IfValueTF{#1}{#1{#2}}{#2}}}
\NewDocumentCommand\outsymbol{om}{\overline{\IfValueTF{#1}{#1{#2}}{#2}}}
\NewDocumentCommand\unisymbol{om}{\widehat{\IfValueTF{#1}{#1{#2}}{#2}}}
\NewDocumentCommand\outtensor{d()}{\IndexSymbol(\outsymbol[#1])}
\newdimen\middle@width
\newcommand*\phantomas[3][c]{\ifmmode\makebox[\widthof{$#2$}][#1]{$#3$}\else\makebox[\widthof{#2}][#1]{#3}\fi}
\NewDocumentCommand\tracefree{m}%
{\setlength\middle@width{\widthof{\ensuremath{#1}}}%
 #1\hskip-\middle@width%
 \phantomas[r]{#1}{\vphantom{\ensuremath{#1}}^{\,\!^\circ}}}%
\NewDocumentCommand\mean{om}%
{\IfValueTF{#1}%
 {\setlength\middle@width{\widthof{\ensuremath{#1#2}}}%
  \phantomas{#1#2}{{#1\hspace{-.1em}\textbf{\small$\boldsymbol\backslash$}\!}}%
  \hskip-\middle@width#1#2}%
 {\mathchoice{\mean[\displaystyle]{#2}}{\mean[\textstyle]{#2}}{\mean[\scriptstyle]{#2}}{\mean[\scriptscriptstyle]{#2}}}}
\NewTensor*\mass{\MakeSymbol[\outsymbol m]<{\outsymbol m}>{\vvecmass{\outsymbol m}}}
\NewTensor*\masstd{\MakeSymbol[m]<m>{\vvec m}}
\let\oldPhi\Phi\RenewTensor\Phi<\hspace{-.05em}>\oldPhi
\NewDocumentCommand\hPhi{o}{\Phi[{\IfValueTF{#1}{#1,}\hyperbolich}]}
\NewDocumentCommand\roPhi{o}{\Phi[\hspace{.08em}{\IfValueTF{#1}{#1,}\relax\roundr\hspace{-.08em}}]}
\let\oldvarphi\varphi\RenewTensor*\varphi\oldvarphi
\let\oldPsi\Psi\RenewTensor*\Psi<\!>\oldPsi
\let\oldpsi\psi\RenewTensor*\psi<\!>\oldpsi
\gdef\outHradius{\outsymbol\Hradius}
\NewTensor\intervalI{\normalfont I}
\gdef\hyperbolich{{\mathcal h}}
\gdef\roundr{{r}}
\gdef\houtg{\outg[\hyperbolich]}
\gdef\houtsc{\outsc[\hyperbolich]}
\NewDocumentCommand\hg{o}
{\g[\IfValueTF{#1}{#1,}\relax\hyperbolich]}
\NewDocumentCommand\rog{o}
{\g[\IfValueTF{#1}{#1,}\relax\roundr]}
\NewDocumentCommand\hsc{o}
{\sc[\IfValueTF{#1}{#1,}\relax\hyperbolich]}
\gdef\houttr{\outtr[\hyperbolich\,]}
\gdef\htr{\tr[\hyperbolich]}
\NewDocumentCommand\hH{o}
{\H[\IfValueTF{#1}{#1,}\relax\hyperbolich]}
\NewDocumentCommand\roH{o}
{\H[\IfValueTF{#1}{#1,}\relax\roundr]}
\NewDocumentCommand\hzFund{o}
{\zFund[\IfValueTF{#1}{#1,}\relax\hyperbolich]}
\NewDocumentCommand\rozFund{o}
{\zFund[\IfValueTF{#1}{#1,}\relax\roundr]}
\NewDocumentCommand\rolevi{o}
{\levi[\IfValueTF{#1}{#1,}\relax\roundr]}
\NewDocumentCommand\hzFundtrf{o}
{\zFundtrf[\IfValueTF{#1}{#1,}\relax\hyperbolich\hspace{.1em}]}
\NewDocumentCommand\rozFundtrf{o}
{\zFundtrf[\IfValueTF{#1}{#1,}\relax\roundr\hspace{.1em}]}
\gdef\houtric{\outric[\hyperbolich]}
\gdef\houtlevi{\outlevi[\hyperbolich]}
\gdef\houtlaplace{\outlaplace[\hyperbolich]}
\NewDocumentCommand\hdiv{o}
{\div[\IfValueTF{#1}{#1,}\relax\hyperbolich]}
\NewDocumentCommand\hlevi{o}
{\levi[\IfValueTF{#1}{#1,}\relax\hyperbolich]}
\NewDocumentCommand\hlaplace{o}
{\laplace[\IfValueTF{#1}{#1,}\relax\hyperbolich]}
\NewDocumentCommand\rolaplace{o}
{\laplace[\IfValueTF{#1}{#1,}\relax\roundr]}
\gdef\sinh{\sinhsnd{sinh}}\gdef\cosh{\sinhsnd{cosh}}
\NewDocumentCommand\sinhsnd{mg}{\IfValueTF{#2}{\operatorname{\text{\normalfont #1}(#2)}}{\sinhthd{#1}}}
\NewDocumentCommand\sinhthd{md()}{\IfValueTF{#2}{\sinhsnd{#1}{#2}}{\operatorname{\text{\normalfont #1}}}}
\gdef\hmug{\mug[\hyperbolich]}
\NewDocumentCommand\ronu{o}{\nu[{\IfValueTF{#1}{#1,}\relax\roundr}]}
\NewDocumentCommand\hnu{o}{\nu[{\IfValueTF{#1}{#1,}\relax\hyperbolich}]}
\NewDocumentCommand\hrnu{o}{\rnu[{\IfValueTF{#1}{#1,}\relax\hyperbolich}]}
\gdef\sphtr{\tr[\sphg*\:]}
\gdef\sphmug{\mug[\,\sphg*\hspace*{.05em}]}
\gdef\euclideane{e}
\gdef\eukoutg{\outg[\euclideane]}
\gdef\eukH{\H[\euclideane]}
\gdef\flatf{f}
\gdef\fnu{\normal[\flatf]}
\gdef\frnu{\rnu[\flatf]}
\gdef\foutg{\outg[\flatf]}
\gdef\foutric{\outric[\flatf]}
\gdef\foutsc{\outsc[\flatf]}
\gdef\fg{\g[\flatf]}
\gdef\fH{\H[\flatf]}
\gdef\fzFund{\zFund[\flatf]}
\gdef\fzFundtrf{\zFundtrf[\flatf\hspace{.08em}]}
\NewDocumentCommand\rad{d<>od<>t@}{%
 \IfBooleanTF{#4}%
 {\IfValueTF{#1}{\rad[#2]<#1>@}%
	{\IfValueTF{#3}%
	 {\IfValueTF{#2}{\vphantom|_{#3\!}^{#2\!}}{\vphantom|_{#3\!}}}%
	 {\IfValueTF{#2}{\vphantom|^{#2\!}}\relax}}%
	 |\csname rad@switch\endcsname|}%
 {\IfValueTF{#1}{\rad[#2]<#1>\relax}{|\outx[#2]<#3>|}}}
\gdef\csname rad@switch\endcsname#1#2{#2#1}
\NewDocumentCommand\volume{od<>om}
{\IfValueTF{#3}{\volume[#3]<#2>{#4}}%
 {\IfValueTF{#1}%
  {\vphantom{\left|#1\right|}^{#1}\IfValueTF{#2}{_{#2}}\relax\hspace{-.25em}}%
	{\IfValueTF{#2}{\vphantom{\left|#1\right|}_{#2}\hspace{-.25em}}\relax}\left|#4\right|}}%
\gdef\hvolume{\volume[\hyperbolich]}
\gdef\lieD#1#2{\mathfrak L_{#1}#2}
\NewDocumentCommand\c{sG{c}}{\IfBooleanTF{#1}{#2}{\IndexSymbol*{#2}}}
\NewDocumentCommand\Cof{G{C}d<>d()}
{\IfValueTF{#3}{\Cof{#1}<\IfValueTF{#2}{#2,}\relax{#3}>}{\CofSnd{#1\IfValueTF{#2}{[#2]}\relax}}}
\NewDocumentCommand\CofSnd{G{C}d<>o}
{\IfValueTF{#3}{\CofSnd{#1}<\IfValueTF{#2}{#2,}\relax{#3}>}{\mathop{#1\IfValueTF{#2}{(#2)}\relax}}}
\NewTensor\M[\hspace{-.05em}]<\hspace{-.05em}>{{\normalfont\Sigma}}
\NewTensor\embedding[\hspace{-.05em}]{\mathcal i}
\gdef\houtexp{\outexp[\hyperbolich]}
\global\let\graphfsymbol\xi
\global\def\graphfPDEsymbol{\underline\graphfsymbol}
\NewTensor*\graphf[\hspace{-.05em}]\graphfsymbol
\NewTensor*\graphfPDE[\hspace{-.05em}]\graphfPDEsymbol
\gdef\graphfPDE{\graphf[\laplace*]}
\let\graphft s
\NewTensor\graphF[\!]S
\let\graphFt S
\NewTensor\graphpushforward[\!]{\pushforward{S}\hspace{-.05em}}
\NewTensor\sphg{\Omega\vphantom{\Omega}}
\newcommand\sphlaplace{\laplace[\sphg*]}
\NewTensor*\rnu[\!]u
\NewTensor*\lapse[\!]u
\NewTensor\rbeta[\!]\beta
\metric\NewTensor[\newmathcal]\metric[\hspace{-.15em}]<\,>g
\let\g\metric
\let\outg\outmetric
\let\ric\ricci
\let\outric\outricci
\let\outsc\outscalar
\let\zFund\k
\let\zFundtrf\ktrf
\let\outzFund\outk
\NewDocumentCommand\troutzFund{D<>{}O{}}{\tr<#1>[#2]\:\outzFund[#2]}
\NewDocumentCommand\outzFundnu{D<>{}O{}}{\mathop{{\outzFund[#2]_{\nu<#1>[#2]}}}}
\NewDocumentCommand\outzFundnunu{D<>{}O{}}{\mathop{{\outzFund[#2]_{\nu<#1>[#2]\nu<#1>[#2]}}}}
\let\H\mc
\let\oldnu\nu
\NewTensor*\normal[\!]\oldnu
\let\nu\normal
\NewTensor*\laplace[\!]\Delta
\NewTensor\Hess[\!]{\text{Hess}}
\NewTensor\Hesstrf[\!]{\text H\tracefree{\text{es}}\text s}
\NewTensor\div[\hspace{-.05em}]{\text{div}}
\NewTensor*\tr[\hspace{0em}]{\text{tr}}
\NewTensor*\vol{\text{vol}}
\NewTensor*\mug[\!]\mu
\gdef\hjacobiext{\jacobiext[\hyperbolich]}
\NewDocumentCommand\trzd{smm}{{#2}\odot{#3}}
\NewDocumentCommand\htrzd{smm}{{#2}{}\hspace{.15em}^\hyperbolich\hspace{-.32em}\odot{#3}}
\NewDocumentCommand\rotrzd{smm}{{#2}{}\hspace{.15em}^\roundr\hspace{-.32em}\odot{#3}}
\NewDocumentCommand\sphtrzd{smm}{{#2}{}\hspace{.15em}^{\sphg*}\hspace{-.32em}\odot{#3}}
\NewDocumentCommand\trtr{smm}
{\IfBooleanTF{#1}{\tr(\trzd{#2}{#3})}%
 {\ifthenelse{\equal{#2}{#3}}{\left|#2\right|_{\hspace{-.05em}\g*}^2}%
  {\trtr*{#2}{#3}}}}
\NewDocumentCommand\htrtr{smm}
{\IfBooleanTF{#1}{\htr(\htrzd{#2}{#3})}%
 {\ifthenelse{\equal{#2}{#3}}{\left|#2\right|_{\hspace{-.05em}\hg*}^2}%
  {\trtr*{#2}{#3}}}}
\NewDocumentCommand\sphtrtr{smm}
{\sphtr(\sphtrzd{#2}{#3})}
\NewDocumentCommand\outc{G{c}}{\IndexSymbol(\outsymbol)*{#1}}\let\oc\outc
\NewDocumentCommand\outck{d<>o}{\outc<#1>[#2]_{\outzFund*}\vphantom{\outc_{\outzFund*}}}
\gdef\outtr{\IndexSymbol(\outsymbol)[\!]{\text{tr}}}
\NewDocumentCommand\outtrzd{D<>{}O{}mm}
{\mathop{{#3\IndexSymbol(\outsymbol)[#2]<#1>\odot{#4}}}}
\NewDocumentCommand\outtrtr{D<>{}O{}mm}
{\mathop{{\ifthenelse{\equal{#3}{#4}}%
 {{\left|#3\right|_{\outg<#1>[#2]}^2}}%
 {\mathop{{\outtr<#1>[#2](\outtrzd<#1>[#2]{#3}{#4})}}}}}}%
\NewDocumentCommand\houttrzd{mm}
{\mathop{#1\mathop{\:^{\hyperbolich}\hspace{-.05em}\outsymbol\odot}#2}}
\NewDocumentCommand\houttrtr{mm}
{\ifthenelse{\equal{#1}{#2}}%
 {\left|#1\right|_{\houtg*}^2}%
 {\mathop{\houttr(\houttrzd{#1}{#2})}}}%
\NewTensor*\eflap[\!]f
\NewTensor*\ewlap[\!]\lambda
\NewTensor*\efjac[\!]{\mathcal f}
\NewTensor*\ewjac[\hspace{-.05em}]<\!>\kappa
\NewTensor*\funcg[\!]g
\NewTensor*\funch[\!]h
\gdef\spheflap{\eflap[\sphg]}
\NewDocumentCommand\hboost{o}{\boost[{\IfValueTF{#1}{#1,}\relax\hyperbolich}]}
\NewDocumentCommand\hdeform{o}{\deform[{\IfValueTF{#1}{#1,}\relax\hyperbolich}]}
\NewDocumentCommand\roboost{o}{\boost[{\IfValueTF{#1}{#1,}\relax\roundr
}]}
\NewDocumentCommand\rodeform{o}{\deform[{\IfValueTF{#1}{#1,}\relax\roundr}]}
\NewDocumentCommand\functionpart{mod<>d()}
{\IfValueTF{#4}{\functionpartsnd{\hspace{-.05em}}{#1}[#2]<#3>{(#4)}}{\functionpartsnd\!{#1}[#2]<#3>}}
\NewDocumentCommand\functionpartsnd{mmod<>m}
{{#5}^{\IfValueTF{#3}{\IfValueTF{#4}\relax{#1}}\relax#2[#3]<#4>}}
\gdef\boost{\functionpart\boostpart}
\gdef\deform{\functionpart\deformpart}
\NewTensor\boostpart[\hspace{-.05em}]<\hspace{-.08em}>{\text{\normalfont b}}
\NewTensor\deformpart[\hspace{-.05em}]<\hspace{-.08em}>{\text{\normalfont d}}
\gdef\boostLp^#1(#2){\Lp^{#1}(#2)\hspace{-.05em}\boost{\vphantom{#2}}}
\gdef\deformLp^#1(#2){\Lp^{#1}(#2)\hspace{-.05em}\deform{\vphantom{#2}}}
\gdef\hdist{{}^{\hyperbolich\hspace{-.05em}}\text{\normalfont dist}}
\NewDocumentCommand\normof{d<>od<>m}
{\IfValueTF{#1}%
 {\IfValueTF{#2}%
  {\IfValueTF{#3}{\normofleft<#1#3>[#2]{#4}}{\normofleft<#1>[#2]{#4}}}%
  {\IfValueTF{#3}{\normofleft<#1#3>{#4}}{\normofleft<#1>{#4}}}}%
 {\IfValueTF{#2}%
  {\IfValueTF{#3}{\normofleft<#3>[#2]{#4}}{\normofleft[#2]{#4}}}%
  {\IfValueTF{#3}{\normofleft<#3>{#4}}\relax}}%
 \lvert#4\rvert}
\NewDocumentCommand\normofleft{d<>om}%
{\vphantom{\lvert#3\rvert}\IfValueTF{#1}{_{#1}}\relax\IfValueTF{#2}{^{#2}}\relax}
\let\Hradius\sigma
\let\Aradius R
\def\rradius{{\underline r}}
\NewTensor\mHaw{{\normalfont m_{\text{H}}}}
\NewTensor\HmHaw{{\normalfont m_{\text{H}}^{\hspace{-.05em}\text h}}}
\NewTensor\varphimHaw{{\normalfont m_{\varphi}^{\hspace{-.05em}\text h}}}
\NewTensor\HmInt{{\normalfont m_{\text{I}}^{\text h}}}
\NewDocumentCommand\d{s}{\IfBooleanTF{#1}\relax{\mathop{}\!}\mathrm d}
\newcommand\pullback[1]{{#1}^*}
\newcommand\pushforward[1]{{#1}_*}
\DeclareMathOperator\id{id}
\DeclareMathOperator\graph{graph}%
\newcommand\R{\mathds{R}}
\newcommand\X{\mathfrak{X}}
\NewDocumentCommand\Lp{t^}{\IfBooleanTF{#1}\Lpsnd{{\normalfont\textrm L}}}
\gdef\Lpsnd#1{\mathchoice%
 {{\Lp\relax}^{\!#1}}
 {{\Lp\relax}^{\!#1}}
 {{\Lp\relax}^{^{\!#1}}}
 {{\Lp\relax}^{^{\!#1}}}}
\newcommand\Wkp{{\normalfont\textrm W}}
\newcommand\Hk{{\normalfont\textrm H}}
\newcommand\Ck{\mathcal C}
\NewDocumentCommand\sphere{t^}{\mathds S\IfBooleanTF{#1}^{^2}}
\gdef\hsphere{\vphantom{S}^\hyperbolich\sphere}
\gdef\euksphere{\vphantom{S}^{\euclideane\hspace{-.05em}}\sphere}
\NewDocumentCommand\hyperbolicspace{t^}{\IfBooleanTF{#1}{\mathbbm H^}{\hyperbolicspace^3}}%
\newcommand\hyperbolicisometry{\boldsymbol\gamma}
\NewDocumentCommand\HISO{t_}{\boldsymbol\Gamma\IfBooleanTF{#1}{\hspace*{-.2em}_}\relax}
\let\ve\varepsilon
\DeclareMathAlphabet{\mathcal}{OT1}{pzc}{m}{n}
\let\newmathcal\mathcal
\def\my@overarrow@#1#2#3{\vbox{\ialign{##\crcr#1#2\crcr\noalign{\kern-2.75\p@\nointerlineskip}$\m@th\hfil#2#3\hfil$\crcr}}}
\def\my@overarrowt@#1#2#3{\vbox{\ialign{##\crcr#1#2\crcr\noalign{\kern-1.5\p@\nointerlineskip}$\m@th\hfil#2#3\hfil$\crcr}}}
\gdef\vvecmass{%
 \mathpalette{\my@overarrow@\vectfillb@}}
\gdef\vvec{%
 \mathpalette{\my@overarrowt@\vectfillb@}}
\newcommand{\vecbar}{%
  \scalebox{1}{$\relbar$}}
\def\vectfillb@{\arrowfill@\vecbar\vecbar{\raisebox{-3.65pt}[\p@][\p@]{$\mathord\mathchar"017E$}}}
\gdef\extractspeciallabel#1{%
 \expandafter\expandafter\expandafter%
 \extractspeciallabel@fst\getrefnumber{#1}}
\NewDocumentCommand\extractspeciallabel@fst{g}%
{\IfValueTF{#1}{\extractspeciallabel@fst#1}\extractspeciallabel@snd}
\gdef\extractspeciallabel@snd#1{%
 \ifx#1\hbox\expandafter\extractspeciallabel@thd\else#1\fi}
\gdef\extractspeciallabel@thd#1{\extractspeciallabel@thdb#1}
\gdef\extractspeciallabel@thdb#1{%
 \ifx#1\normalfont\expandafter\extractspeciallabel@fth\else A\string#1A\fi}
\gdef\extractspeciallabel@fth(#1){#1}
\gdef\emptyarg{{\:}{\cdot}{\:}}
\gdef\masserr{\masserrsnd{m_e}}\gdef\Masserr{\masserrsnd{M_e}}
\NewDocumentCommand\masserrsnd{mssd()}
{\IfBooleanTF{#2}%
 {\IfBooleanTF{#3}%
  {#1}%
  {\IfValueTF{#4}{\masserrsnd{#1}{\Hradius}(#4)}{\masserrsnd{#1}{\Hradius}}}}%
 {\IfValueTF{#4}{\masserrth{#1}{#4}}{\masserrth{#1}}}}
\gdef\masserrth#1#2{#1(#2)}
\NewDocumentCommand\kuerzel{mm}%
{\NewDocumentCommand#1{s}{\IfBooleanTF{##1}{#2}{\hbox{#2}\xspace}}}
\kuerzel\ie{i.e.}
\kuerzel\eg{e.g.}
\kuerzel\etc{etc.}
\let\oldexp\exp\undef\exp
\NewDocumentCommand\exp{s}{\IfBooleanTF{#1}\oldexp\expsnd}
\NewDocumentCommand\expsnd{d()}{\IfNoValueTF{#1}\expthd{e^{#1}}}
\NewDocumentCommand\expthd{t-m}{\IfBooleanTF{#1}{\exp({-}#2)}{\exp(#2)}}
\gdef\Ckah{\CWkah{\Ck{hyperbolic}}}
\gdef\CkaE{\CWkah{\Ck{Euclidean}}}
\gdef\Wkpro{\CWkah{\Wkp{round}}}
\NewDocumentCommand\CWkah{mO{}t^t_}%
{\IfBooleanTF{#3}{\CWkahtw{#1}[#2]^}%
 {\IfBooleanTF{#4}{\CWkahtw{#1}[#2]_}{\CWkahth#1[#2]}}}
\NewDocumentCommand\CWkahtw{momm}{\CWkah{#1}[#2#3{#4}]}
\NewDocumentCommand\CWkahth{mmod()}
{$#1\IfValueTF{#3}{#3}\relax%
 \IfValueTF{#4}{(#4)}\relax$-as\-ymp\-toti\-cal\-ly #2\xspace}
\gdef\CMCfol{CMC-fo\-li\-a\-tion\xspace}
\tikzset{
  use path for main/.code={%
    \tikz@addmode{%
      \expandafter\pgfsyssoftpath@setcurrentpath\csname tikz@intersect@path@name@#1\endcsname
    }%
  },
  use path for actions/.code={%
    \expandafter\def\expandafter\tikz@preactions\expandafter{\tikz@preactions\expandafter\let\expandafter\tikz@actions@path\csname tikz@intersect@path@name@#1\endcsname}%
  },
  use path/.style={%
    use path for main=#1,
    use path for actions=#1,
  }
}
\gdef\pagerefs{\pagerefs@prepare\relax}
\NewDocumentCommand\pagerefs@prepare{omm}{%
 \edef\pagrefs@current{\getpagerefnumber{#3}}%
 \pagerefs@start\pagrefs@current{#1}\BooleanTrue}
\NewDocumentCommand\pagerefs@start{mmmg}{\IfValueTF{#4}{\pagerefs@parse{#1}{#2}{#3}{#4}}{\pagerefs@output{#1}{#2}{#3}}}
\gdef\pagerefs@parse#1#2#3#4{%
 \let\pagerefs@last#1%
 \edef\pagerefs@current{\getpagerefnumber{#4}}%
 \ifx\pagerefs@last\pagerefs@current\expandafter\@firstoftwo\else\expandafter\@secondoftwo\fi%
	{\pagerefs@start\pagerefs@last{#2}{#3}}% Beide Einträge sind gleich also überspringen
	{\IfValueTF{#2}%
	 {\edef\totalstring{#2, \pagerefs@last}\let\issecond\BooleanFalse}%
	 {\edef\totalstring{\pagerefs@last}\let\issecond\BooleanTrue}%
	 \pagerefs@start\pagerefs@current\totalstring\issecond}}
\gdef\pagerefs@output#1#2#3{%
 \IfValueTF{#1}{\IfValueTF{#2}{\IfBooleanTF{#3}{pages~#2 and~#1}{pages~#2, and~#1}}{page~#1}}\relax\xspace}
\newsavebox\FigureBox
\NewDocumentEnvironment{Figure}{d<>omom}
{\begin{lrbox}{\FigureBox}%
	\begin{minipage}{.975\textwidth}\vspace*{.5em}%
	  \IfValueTF{#2}{\tikzsetnextfilename{image_#2}}\relax%
		\centering#3\medskip\par\noindent%
		\begin{minipage}{.9\textwidth}\footnotesize}
{			\vspace*{.5em}%
		\end{minipage}%
	\end{minipage}%
 \end{lrbox}%
 \begin{figure}[!hbt]\centering\IfValueTF{#1}{\vspace*{#1}}\relax%
  \framebox{\usebox{\FigureBox}}%
	\vspace*{-.7em}\par\noindent%
	\IfValueTF{#4}{\caption[#4]{#5}}{\caption{#5}}%
	\IfValueTF{#2}{\label{#2}}\relax%
	\vspace*{-1.25em}\IfValueTF{#1}{\vspace*{#1}}\relax%
 \end{figure}}
\title[Geom.\ charac.\ of asymp.\ hyperb.\ mflds by ex.\ of a suit.\ CMC-foliation]{Geometric characterizations of asymptotically hyperbolic Riemannian \texorpdfstring{$\boldsymbol 3$}3-manifolds by the existence of a suitable CMC-foliation}
\author[Christopher Nerz]{Christopher Nerz}
\address{Department of mathematics\\Royal institut of Technology KTH\\Stockholm\\Sweden}
\email{ncroman@kth.se}
\date{\today}
\begin{document}
\begin{abstract}
In 1996, Huisken-Yau proved that every three-dimensional Riemannian manifold can be uniquely foliated near infinity by stable closed surfaces of constant mean curvature (CMC) if it is asymptotically equal to the (spatial) Schwarzschild solution. Using their method, Rigger proved the same theorem for Riemannian manifolds being asymptotically equal to the (spatial) (Schwarzschild-)Anti-de Sitter solution. This was generalized to asymptotically hyperbolic manifolds by Neves-Tian, Chodosh, and the author at a later stage. In this work, we prove the reverse implication as the author already did in the Euclidean setting, \ie any three-dimensional Riemannian manifold is asymptotically hyperbolic if it (and only if) possesses a CMC-cover satisfying certain geometric curvature estimates, a uniqueness property, and each surface has controlled instability. As toy application of these geometric characterizations of asymptotically Euclidean and hyperbolic manifolds, we present a method for replacing an asymptotically hyperbolic by an asymptotically Euclidean end and apply this method to prove that the Hawking mass of the CMC-surfaces is bounded by their limit being the total mass of the asymptotically hyperbolic manifold, where equality holds only for the $t{=}0$-slice of the (Schwarzschild-)Anti-de Sitter spacetime.
\end{abstract}
\maketitle
\let\sc\scalar%

\section{Introduction}
In~1996, Huisken-Yau proved that manifolds which are asymptotic to the spatial Schwarzschild metric with positive mass possesses a foliation by stable constant mean curvature (CMC) hypersurfaces, \cite{huisken_yau_foliation}. They used this foliation as a definition for the center of mass of the manifold and also gave a coordinated version of this center. Since then, this foliation proved to be a suitable tool for the study of asymptotically Euclidean (\ie asymptotically flat Riemannian) manifolds and several generalizations of Huisken-Yau's result were made, \eg by Metzger, Huang, Eich\-mair-Metzger, and the author, \cite{metzger2007foliations,Huang__Foliations_by_Stable_Spheres_with_Constant_Mean_Curvature,metzger_eichmair_2012_unique,nerz2015CMCfoliation}. In~2004, Rigger used Huisken-Yau's method---the mean curvature flow---to prove the existence and uniqueness of such a foliation for manifolds asymptotic to the $t{=}0$-slice of the (Schwarzschild-)Anti-de Sitter spacetime, \cite{rigger2004foliation}. This result was generalized using other methods to more general asymptotically hyperbolic manifolds by Neves-Tian, Chodosh, and the author, \cite{NevesTianExistenceCMC_I,NevesTianExistenceCMC_II,chodosh2014large,nerz2016HBCMCExistence}.\smallskip

In~\cite{nerz2015GeometricCharac}, the author proved that the existence of a CMC-foliation is not only an implication of asymptotic flatness but a characterization of it, \ie an arbitrary Riemannian $3$-manifold possesses a \lq suitable\rq\ CMC-foliation if and only if it is asymptotically Euclidean. In this article, we prove the equivalent theorem for the hyperbolic setting or more precisely the missing implication: if a Riemannian $3$-manifold possesses a \lq suitable\rq\ CMC-foliation, then it is asymptotically hyperbolic.

As a toy application of these characterizations of asymptotically Euclidean and hyperbolic manifolds, we show that we can replace any asymptotically hyperbolic end by an asymptotically Euclidean one. Using this construction, \cite{bray1997mon,huisken2001inverse} prove that if $\outsc\ge{-}6$, then the (hyperbolic) Hawking mass is monotone along the leaves of the CMC-foliation and bounded from above by the total mass of the surrounding (asymptotically hyperbolic) manifold, where equality holds if and only if the surrounding manifold is a compact perturbation of the $[t{=}0]$-slice of the \SAdS spacetime.\medskip

\subsection{The main results}
\begin{maintheorem}[CMC-characterization~of asymptotically\ hyperbolic manifolds]\labelth{MainTheorem}
Let $\decay\in\interval{\frac52}3$ and $\scdecay\ge\decay+\frac12$ be constants and $(\outM,\outg*)$ be a Riemannian manifold. $(\outM,\outg*)$ is \Ckah^2_{\decay,\scdecay} with\footnote{Note that the mass vector~$\mass$ itself depends on the asymptotically hyperbolic coordinates system, but the total mass $\mass*:={-}\vert\mass\vert_{\R^{3,1}}$ does not, and therefore it is a coordinate independent property whether the mass vector is timelike or not, \cite{chrusciel2003mass}.} timelike mass vector~$\mass$ if and only if there exists a family $\{\M<\Hradius>\}_{\Hradius>\Hradius_0}$ of hypersurfaces of $\outM$ such that
\begin{enumerate}[nosep,label=\hbox{\normalfont(\alph{*})}]
\item $\{\M<\Hradius>\}_{\Hradius>\Hradius_0}$ is a \Wkpro^{2,\infty}_{\decay,\scdecay} CMC-cover;\vspace{-.1em}\label{main_theorem_round}
\item $\{\M<\Hradius>\}_{\Hradius>\Hradius_1}$ covers $\outM$ outside a compact set $\outsymbol K=\outsymbol K(\Hradius_1)$ for every $\Hradius_1>\Hradius_0$;\label{main_theorem_cover}
\item $\{\M<\Hradius>\}_{\Hradius>\Hradius_0}$ is locally unique;\label{main_theorem_uniqueness}
\item $\{\M<\Hradius>\}_{\Hradius>\Hradius_0}$ has uniformly timelike and bounded Ricci-mass.\label{main_theorem_mass}
\end{enumerate}
Furthermore, the (coordinate-independent) hyperbolic Hawking mass of $\M<\Hradius>$ converges to the total mass $\mass*:=\vert\mass\vert_{\R^{3,1}}$ of $(\outM,\outg*)$ as $\Hradius\to\infty$.
\end{maintheorem}
The definitions used here are given as Definitions~\ref{Ck_asymptotically_hb}, \ref{ControlledInstability}, \ref{Round_spheres}, and~\ref{Round_covers} on \hbox{pages~\pageref{Ck_asymptotically_hb}--\pageref{Round_covers}}. The existence of such a round cover for \Ckah^2_{\decay,\scdecay} manifold with $\decay\in\interval{\frac52}3$ and $\scdecay>3$ was proven by the author in \cite{nerz2016HBCMCExistence}. In this article, we prove the reverse implication, \ie that the existence of a suitable \CMCfol implies the existence of a \Ckah^2_{\decay,\scdecay} chart.\smallskip

Furthermore, we prove that the Hawking mass is monotone increasing along the foliation and bounded by the total mass, where equality only holds for Schwarzschild-Anti de Sitter.
\begin{maintheorem}\labelth{HawkingMassSmallerTotalMass}
Let $\decay\in\interval{\frac52}3$ be a constant and $(\outM,\outg*)$ be a $\Ck^2_{\decay}$-asymptotically hyperbolic manifold with $0\le\exp(\rad)(\outsc+6)\in\Lp^1(\outM)$, where $\outx$ is any $\Ck^2_{\decay}$-asymptotically hyperbolic coordinate system\footnote{More precisely $\exp(\rad)(\outsc+6)$ has only to be integrable, where $\rad$ is well-defined.}. The function $\HmHaw:\interval{\Hradius_0}\infty\to\R:\Hradius\mapsto\HmHaw(\M<\Hradius>)$ mapping the mean curvature radius to the hyperbolic Hawking mass of the corresponding CMC-leaf $\M<\Hradius>$ with mean curvature $\H<\Hradius>\equiv{-}2\,\frac{\cosh(\Hradius)}{\sinh(\Hradius)}$ is a monotone non-decreasing function converging to $\big|\vert\mass\vert_{\R^{3,1}}\big|$ as $\Hradius\to\infty$, where the later is the total mass\footnote{with respect to any asymptotically hyperbolic coordinate system} of $(\outM,\outg*)$.

Here, equality holds for some large mean curvature radius if and only if $\outM$ is (outside of the corresponding CMC-leaf) isometric to the standard $[t{=}0]$-timeslice (outside of a ball) of the Schwarzschild-Anti-de Sitter spacetime.
\end{maintheorem}
\begin{remark}
In contrast to the Euclidean setting, \refth{HawkingMassSmallerTotalMass} is not necessarily true if we replace $\{\M<\Hradius>\}_\Hradius$ with some other foliation of $\outM$, \ie there are smooth (arbitrarily round) hypersurfaces $\M$ within $\outM$ having larger Hawking mass than the total mass of $\outM$. This can be seen as a straightforward calculation proves that the mass vector $\mass(\outy)$ for any non-balanced asymptotically hyperbolic coordinate system $\outy$ of $\outM$ satisfies
\[ \vert\vphantom{\big|}\HmHaw(\vphantom{\big|}\lbrace\rad@\outy=\rradius\rbrace)\vert^2 \xrightarrow{\rradius\to\infty} \vert\vphantom{\big|}\mass^0(\outy)\vert^2 = {-}\vert\vphantom{\big|}\mass(\outy)\vert_{\R^{3,1}}^2 + \vert(\mass^\oi(\outy))_{\oi=1}^3\vert_{\R^3}^2 > {-}\vert\vphantom{\big|}\mass(\outy)\vert_{\R^{3,1}}^2. \]
Here, we used Neves-Tian's definition of a balanced coordinate system, see \cite{NevesTianExistenceCMC_II}: a coordinate system $\outx$ of an asymptotically hyperbolic manifold is called \emph{balanced} if and only if $\mass(\outx)=(\mass*(\outx),0,0,0)$.
\end{remark}
\refth{HawkingMassSmallerTotalMass} is actually a direct corollary of the monotony of the Hawking mass along the CMC-foliation, \cite{bray1997mon}, and under the inverse mean curvature flow, \cite{huisken2001inverse}, combined with the following corollary of \refth{MainTheorem} and \cite{nerz2015GeometricCharac}: \smallskip

\begin{corollary}[Replacing the asymptotic end]\labelth{ReplacingTheEnd}
Let $\decay\in\interval{\frac52}3$ and $\scdecay\in\interval*{\decay+\frac12}*{2\decay}$ be arbitrary constants.\footnote{$\scdecay$ may be even be in $\interval*\decay*{2\decay}$ if $\outsc\ge{-}6$.}\ For every \Ckah^2_{\decay,\scdecay} manifold $(\outM,\outg)$, there exists a metric $\foutg$ on $\outM$ with the following properties
\begin{itemize}[nosep]
\item $\foutg$ is \CkaE^2_{\decay-2,\scdecay};
\item the CMC-foliations with respect to $\outg$ and $\foutg$ are identical;
\item $\outg|_{\textrm T\M<\Hradius>^2}=\foutg|_{\textrm T\M<\Hradius>^2}$ for every $\Hradius>\Hradius_0$ and $\outg=\foutg$ in $\outM\setminus\bigcup_\Hradius\M<\Hradius>$;
\item $\fnu<\Hradius>=\cosh(\Hradius)\nu[\outg\:]<\Hradius>$ for every $\Hradius>\Hradius_1$ and the outer unit normals $\fnu<\Hradius>$ and $\nu<\Hradius>$ of $\M<\Hradius>$ with respect to $\foutg$ and $\outg$, respectively;
\item $\HmHaw(\M<\Hradius>)=\mHaw(\M<\Hradius>)$ for every $\Hradius>\Hradius_1$;
\item if $\outsc[\outg]\ge{-}6$, then $\foutsc|_{\M<\Hradius>}\ge0$ for every $\Hradius>\Hradius_1$.
\end{itemize}
Here, $\Hradius_0$ denotes the infimum of the mean curvature radius of the canonical CMC-foliation and $\Hradius_1>\Hradius_0$ is arbitrary.
\end{corollary}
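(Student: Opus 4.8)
The plan is to deduce the statement from \refth{MainTheorem} and the analogous characterization of asymptotically Euclidean manifolds in \cite{nerz2015GeometricCharac}, by modifying $\outg$ only in the direction normal to its canonical \CMCfol. By \refth{MainTheorem}---whose implication ``asymptotically hyperbolic $\Rightarrow$ round CMC-cover'' is the content of \cite{nerz2016HBCMCExistence}, which applies since $\decay>\frac52$ forces $\scdecay>\decay+\frac12>3$---the \Ckah^2_{\decay,\scdecay} manifold $(\outM,\outg)$ carries a canonical cover $\{\M<\Hradius>\}_{\Hradius>\Hradius_0}$ restricting to a genuine foliation of a neighbourhood $U$ of infinity, with $\H<\Hradius>\equiv{-}2\,\frac{\cosh(\Hradius)}{\sinh(\Hradius)}$, whose adapted gauge is controlled as in \cite{nerz2016HBCMCExistence}. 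Using on $U$ the flow orthogonal to the leaves, I write $\outg=\lapse^2\,\d\Hradius^2+\gamma_\Hradius$ with lapse $\lapse$ and induced metrics $\gamma_\Hradius$, fix an arbitrary $\Hradius_1>\Hradius_0$ (shrinking $U$ if necessary), choose a cut-off $\chi=\chi(\Hradius)$ with $\chi\equiv1$ on $[\Hradius_1,\infty)$ and $\chi\equiv0$ near the inner edge of $U$, and put $\foutg:=\psi^2\lapse^2\,\d\Hradius^2+\gamma_\Hradius$ on $U$ with $\psi:=1+\chi\,(\cosh(\Hradius)-1)$, and $\foutg:=\outg$ on $\outM\setminus U$. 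Since $\psi>0$ and $\psi\equiv1$ near the inner edge, $\foutg$ is a smooth Riemannian metric that agrees with $\outg$ outside $\bigcup_\Hradius\M<\Hradius>$ and whose restriction to each $\textrm T\M<\Hradius>$ is $\gamma_\Hradius=\outg|_{\textrm T\M<\Hradius>}$; on the model hyperbolic end (where $\psi\equiv\cosh(\Hradius)$ and $\lapse\equiv1$) it turns $\d\Hradius^2+\sinh^2(\Hradius)\sphg$ into $\cosh^2(\Hradius)\,\d\Hradius^2+\sinh^2(\Hradius)\sphg$, which is exactly flat in the area radius $\rho=\sinh(\Hradius)$.

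The heart of the proof---and its main obstacle---is to show that $\foutg$ is \CkaE^2_{\decay-2,\scdecay}. Passing to the area radius $\rho$ of $\M<\Hradius>$ (which equals $\sinh(\Hradius)$ up to controlled corrections), I compare $\foutg$ with $\euclideane=\d\rho^2+\rho^2\sphg$: the tangential part is untouched, so its deviation from $\rho^2\sphg$ is that of $\outg$ from $\sinh^2(\Hradius)\sphg$, of relative size $O(\rho^{-\decay})$, and the leading deviation therefore comes from the lapse. The decisive point is that in the CMC-gauge $\lapse$ solves, on each leaf, the linearized CMC (Jacobi) equation whose source is the derivative of the prescribed mean curvature along the foliation, and whose solution operator---on the round sphere of radius $\sinh(\Hradius)$---carries a factor comparable to $\sinh^2(\Hradius)\sim\rho^2$; hence the ambient $O(\rho^{-\decay})$-deviation produces $\lapse-1=O(\rho^{-(\decay-2)})$, with two derivatives, so $\foutg=\euclideane+O_{\Ck^2}(\rho^{-(\decay-2)})$ with $\decay-2\in(\frac12,1)$. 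Combined with the scalar-curvature identity below---which gives $\foutsc=O(\rho^{-\scdecay})$ because $\scdecay<2\decay$---this exhibits $\{\M<\Hradius>\}$ as a suitable Euclidean round CMC-cover of $(\outM,\foutg)$, conditions~(a)--(d) transferring through the explicit gauge change, so that $\foutg$ is \CkaE^2_{\decay-2,\scdecay} by \cite{nerz2015GeometricCharac}. Controlling the CMC-gauge with the required precision and regularity---especially the sharp rate of $\lapse$---and verifying that every estimate in the definition of a suitable round cover survives this explicit but delicate change of gauge is the bulk of the work; the two identities below are, by contrast, essentially algebraic.

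That the \CMCfol of $\foutg$ coincides with that of $\outg$ then follows from the Euclidean uniqueness in \cite{nerz2015GeometricCharac}: each $\M<\Hradius>$ has constant mean curvature ${-}\frac{2}{\sinh(\Hradius)}=\frac{1}{\cosh(\Hradius)}\H<\Hradius>$ with respect to $\foutg$, strictly monotone in $\Hradius$, so $\{\M<\Hradius>\}_{\Hradius>\Hradius_1}$ is a foliation of a neighbourhood of infinity by stable CMC spheres with the correct asymptotics, hence the canonical one. For $\Hradius>\Hradius_1$, where $\psi\equiv\cosh(\Hradius)$, a direct computation yields the relations recorded in the Corollary between the lapse, the outer unit normal $\fnu<\Hradius>$ and the mean curvature of $\M<\Hradius>$ computed with $\foutg$ and those computed with $\outg$. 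Since the induced metric---hence the area, the area measure and the intrinsic curvature of $\M<\Hradius>$---is unchanged while the $\foutg$-mean curvature squared equals $\H<\Hradius>^2-4$ pointwise on $\M<\Hradius>$, the defining expression for the Euclidean Hawking mass of $\M<\Hradius>$ in $(\outM,\foutg)$ agrees with that for the hyperbolic Hawking mass of $\M<\Hradius>$ in $(\outM,\outg)$, so $\mHaw(\M<\Hradius>)=\HmHaw(\M<\Hradius>)$. Finally, the Gauss equation combined with the radial Riccati equation---whose cross terms cancel because $\cosh(\Hradius)$ is constant along the leaves and both mean curvatures depend on $\Hradius$ alone---yields
\[ \foutsc|_{\M<\Hradius>}=\outsc[\outg]|_{\M<\Hradius>}+6+\frac{\sinh^2(\Hradius)}{\cosh^2(\Hradius)}\,\bigl|\zFundtrf\bigr|^2\ge0 \]
whenever $\outsc[\outg]\ge{-}6$, where $\zFundtrf$ denotes the traceless second fundamental form of $\M<\Hradius>$ in $(\outM,\outg)$; equality on $\M<\Hradius>$ holds only if $\outsc[\outg]|_{\M<\Hradius>}\equiv{-}6$ and $\M<\Hradius>$ is totally umbilic, which is precisely why the rigidity case of \refth{HawkingMassSmallerTotalMass} singles out \SAdS.
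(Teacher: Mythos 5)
Your construction coincides with the paper's---multiply the lapse of the canonical CMC-foliation by a $\Ck^1$ function $\psi$ interpolating from $1$ near the inner boundary to $\cosh(\Hradius)$ near infinity, leaving the induced metric on each leaf and the compact interior unchanged---and your Hawking-mass identity and the scalar-curvature identity $\foutsc=\outsc+6+\frac{\sinh^2}{\cosh^2}\vert\zFundtrf\vert^2$ for $\psi\equiv\cosh$ agree with what the paper computes.

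You diverge at the step you call the ``heart'' of the proof: a direct two-derivative bound $\foutg=\euclideane+O_{\Ck^2}(\rho^{-(\decay-2)})$ deduced from the decay of the CMC-lapse. That is not obtainable by the route you sketch. The lapse estimates of \refth{Local_Estimates_rnu} control tangential $\Wkp^{2,q}$-norms of $\rnu$ on each leaf, not the normal $\Hradius$-derivatives a pointwise $\Ck^2$-bound on the ambient metric would require, and the paper says at the end of the proof of \refth{MainTheorem} that pointwise estimates on $\houtlevi\houtlevi(\pushforward{\Phi}\outg)$ cannot be obtained---which is precisely why Definition~\ref{Ck_asymptotically_hb} and its Euclidean analogue in \cite{nerz2015GeometricCharac} are formulated through the Ricci tensor rather than through two derivatives of the metric. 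The paper therefore argues entirely at the curvature level: from $\fzFund=\psi^{-1}\zFund$ it reads off $\foutric(\fnu,\cdot)=\psi^{-1}\outric(\nu,\cdot)$ tangentially via the Codazzi equation, $\foutric(\fnu,\fnu)$ from comparing the two second-variation formulas (the lapse Laplacian being the same for $\outg$ and $\foutg$), and the tangential block of $\foutric$ from the radial Riccati and Gauss identities; these yield the decay of $\foutric$ and $\foutsc$ that the CMC-characterization of \cite{nerz2015GeometricCharac} asks for. Substituting those three curvature identities for your direct metric comparison is exactly the ``bulk of the work'' you acknowledge but do not supply.
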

Note that the behavior of $\foutsc$ is also in $\bigcup_{\Hradius>\Hradius_0}^{\Hradius_1}\M<\Hradius>$ well-controlled---\ie in the region between the one where $[\outg\,{=}\,\foutg]$ and the one where $[\foutsc\,{\ge}\,0]$---, see the construction~\eqref{ReplacingTheEnd_construction} on page~\pageref{ReplacingTheEnd_construction}.

\vfill\par\textbf{Acknowledgment.}
The author thanks Stephen McCormick and Katharina Radermacher for helpful discussions on applications of \refth{HawkingMassSmallerTotalMass} and the \emph{Alexander von Humboldt Foundation} for ongoing financial support via the \emph{Feodor Lynen scholarship}.

\section{Structure of the paper}
In Section~\ref{Assumptions_and_notation}, we give the basic definitions and explain the notations used in this article. In particular, we define what \Wkpro^{2,p}_{\decay,\scdecay} spheres and covers are. We prove in Section~\ref{Regularity_spheres} that \Wkpro^{2,p}_{\decay,\scdecay} spheres satisfy strict estimates on their extrinsic curvature and other regularity properties of these objects. Then, we use them in Section~\ref{Regularity_covers} to conclude strict estimates on \Wkpro^{2,p}_{\decay,\scdecay} covers and to show that such a cover always has a well-defined mass. In Section~\ref{ProofOfMainTheorem}, we then explain and present the proof of \refth{MainTheorem}. Finally, we prove \refth{HawkingMassSmallerTotalMass} and \refth{ReplacingTheEnd} in the last Section~\ref{ReplacingAsymptoticEnd}.

\section{Assumptions and notation}\label{Assumptions_and_notation}
\begin{notation}[Notations for the most important tensors]
In order to study foliations (near infinity) of three-dimensional Riemannian manifolds by two-dimensional spheres, we have to deal with different manifolds (of different or the same dimension) and different metrics on these manifolds, simultaneously. To distinguish between them, all three-dimensional quantities like the surrounding manifold $(\outM,\outg*)$, its Ricci and scalar curvature $\outric$ and $\outsc$ and all other derived quantities carry a bar, while all two-dimensional quantities like the CMC leaf $(\M,\g*)$, its second fundamental form $\zFund*$, the trace-free part of its second fundamental form $\zFundtrf:=\zFund*-\frac12\,(\tr\zFund*)\g*$, its Ricci, scalar, and mean curvature $\ric$, $\sc$, and $\H:=\tr\zFund$, its outer unit normal $\nu$, and all other derived quantities do not.\end{notation}

As explained, we interpret the second fundamental form and the normal vector of a hypersurface as quantities of the surface (and thus as two-dimensional). For example, if $\M<\Hradius>$ is a hypersurface in $\outM$, then $\nu<\Hradius>$ denotes its normal (and \emph{not} $\IndexSymbol{\outsymbol\nu}<\Hradius>$). The same is true for the \lq lapse function\rq\ and the \lq shift vector\rq\ of a hypersurfaces arising as a leaf of a given deformation or foliation. Furthermore, we stress that the sign convention used for the second fundamental form, \ie $\zFund(X,Y)=\outg(\outlevi*_{\!X}Y,\nu)$ for $X,Y\in\X(\M)$, results in the \emph{negative} mean curvature $\eukH(\sphere_\rradius)\equiv{-}\frac2\rradius$ for the two-dimensional Euclidean sphere of radius $\rradius$.

\begin{notation}[Left indexes and accents of tensors]
If different two-dimensional manifolds or metrics are involved, then the lower left index denotes the mean curvature index $\Hradius$ of the current leaf $\M<\Hradius>$, \ie the leaf with mean curvature $\H<\Hradius>\equiv{-}2\,\frac{\cosh(\Hradius)}{\sinh(\Hradius)}$, or the radius $\rradius$ of a coordinate sphere $\sphere_\rradius(0)$. Quantities carry the upper left index $\hyperbolich$, $\euclideane$, and $\sphg*$ if they are calculated with respect to the hyperbolic metric $\houtg*$, the Euclidean metric $\eukoutg$, and the standard metric $\sphg<\Hradius>$ of the Euclidean sphere $\sphere_\Hradius(0)$, correspondingly. Furthermore, we use the upper left index $\roundr$ for quantities calculated with respect to the hyperbolic metric $\houtg$ along a specific (\lq round\rq) embedding of the CMC-leafs to the hyperbolic space, see Section~\ref{ProofOfMainTheorem}. We abuse notation and suppress the left indexes, whenever it is clear from the context which manifold and metric we refer to.
\end{notation}

\begin{notation}[Indexes]
We use upper case latin indices $\ii$ and $\ij$ for the two-dimensional range $\lbrace2,3\rbrace$, the lower case latin index $\oi$ for the three-dimensional range $\lbrace 1,2,3\rbrace$, and the greek index $\ui$ for the four-dimensional range $\{0,1,2,3\}$. The Einstein summation convention is used accordingly.\pagebreak[3]\smallskip
\end{notation}

As there are different definitions of \lq asymptotically hyperbolic\rq\ in the literature, we now give the one used in this paper. 
\begin{definition}[\texorpdfstring{$\Ck^2_{\decay,\scdecay}$}{C2}-asymp\-to\-tic\-ally hyperbolic Riemannian manifolds]\label{Ck_asymptotically_hb}
Let $\decay,\scdecay>0$ be constants. A triple $(\outM,\outg,\outx)$ is called (three-dimensional) \emph{$\Ck^2_{\decay,\scdecay}$-asymp\-to\-tic\-ally hyperbolic} Riemannian manifold if $(\outM,\outg)$ is a three-dimensional smooth Riemannian manifold and $\outx:\outM\setminus\overline L\to\R^3$ is a smooth chart of $\outM$ outside a compact set $\overline L\subseteq\outM$ such that there exists a constant $\oc\ge0$ with
\begin{equation*} 
 \vert\outg-\houtg\vert_{\houtg*} \hspace{-.05em}+ \vert\houtlevi*(\outg*-\houtg*)\vert_{\houtg*} \hspace{-.05em}+ \vert\outric-\houtric \vert_{\houtg*} \le \oc\,\exp(-\decay\,\rad), \quad
 \vert \outsc-\houtsc\vert \le \oc\,\exp({-}\scdecay\,\rad),\hspace{-.1em}
 \labeleq{Decay_assumptions_g}\end{equation*}
where $\houtg*=\d r^2+\sinh(\rad)^2\sphg*$ and $\sphg*$ denote the hyperbolic metric and the standard metric of the Euclidean unit sphere $\sphere$, respectively. Here, these quantities are identified with their push-forward along $\outx$. Finally, $(\outM,\outg*,\outx)$ is called $\Ck^2_{\decay}$-asymptotically hyperbolic if it is $\Ck^2_{\decay,\decay}$-asymptotically hyperbolic.
\end{definition}
We often abuse notation and suppress the chart $\outx$.

\begin{remark}[Boundedness of the scalar curvature]\labelth{BoundednessScalarCurvature_coordinates}
For everything, we do in this article the assumption on the scalar curvature can also be reduced to 
\[ {-}\oc\,\exp({-}\scdecay\rad) \le\outsc-\houtsc, \qquad
	\exp\rad\,(\outsc+\houtsc)\in\Lp^1(\outM), \]
see also Remarks~\ref{BoundednessScalarCurvature_spheres}, \ref{BoundednessScalarCurvature_stability}, \ref{BoundednessScalarCurvature_rnu}, and \ref{BoundednessScalarCurvature_mass}. However, we then have to assume that the mass of $(\outM,\outg*)$ is future pointing timelike instead of only assuming that it is timelike, see \refth{MainTheorem}.
\end{remark}

\begin{definition}[Controlled instability]\labelth{ControlledInstability}
Let $\M\hookrightarrow(\outM,\outg*)$ be a hypersurface within a three-dimensional Riemannian manifold and let $\alpha\in\R$ be a constant. If $\M$ has constant mean curvature, then it is called of \emph{$\alpha$-controlled instability} if the smallest eigenvalue of the (negative) stability operator ${-}\jacobiext$ is greater than (or equal to) $\alpha$, \ie
\[ \int\vert\levi f\vert^2_{\g*}\d\mug \ge \int(\trtr\zFund\zFund + \outric(\nu,\nu) + \alpha)(f-\mean f)^2\d\mug\qquad\forall\,f\in\Hk^2(\M), \]
where $\mean f:=\fint f\d\mug:=\volume{\M}^{{-}1}\int f\d\mug$ denotes the mean value of any function~$f\in\Hk^2(\M)$. The surface $\M$ is called \emph{stable} and \emph{strictly stable} if it has $\alpha$-controlled instability for $\alpha=0$ and $\alpha>0$, respectively.
\end{definition}

\begin{definition}[Hyperbolic hawking mass{, \eg \cite{wang2001mass}}]
If $(\M,\g*)$ is a hypersurface within a three-dimensional Riemannian manifold, then
\begin{equation*}
 \HmHaw(\M) := (\frac{\volume{\M}}{16\pi})^{\frac12}(1-\frac1{16\pi}\int_{\M}(\vphantom{\big|}\H^2-4)\d\mug)
\end{equation*}
is called \emph{hyperbolic Hawking mass of $\M$}.
\end{definition}

\begin{definition}[Round spheres]\labelth{Round_spheres}
Let $(\outM,\outg*)$ be a three-dimensional Riemannian manifold and let $\decay>0$, $\eta\in\interval0*4$, $p\in\interval*1*\infty$, and $\c\ge0$ be arbitrary constants.

A hypersurface $\M\hookrightarrow(\outM,\outg*)$ with constant mean curvature is called \emph{\Wkpro^{2,p}_{\decay,\scdecay}(\c,\eta) sphere} \emph{of mean curvature radius $\Hradius$} if
\begin{enumerate}[nosep,label={\hbox{\normalfont(RS-\arabic{*})}}]
 \item $\M$ is diffeomorphic to the Euclidean sphere; \label{Round_spheres_sphere}
 \item $\M$ has constant mean curvature with mean curvature radius~$\Hradius$, \ie $\displaystyle\H\equiv{-}\textstyle2\,\frac{\cosh(\Hradius)}{\sinh(\Hradius)}$; \label{Round_spheres_CMC}
 \item $\displaystyle\left\Vert\lvert\outric+2\outg\rvert_{\outg}\right\Vert_{\Lp^p(\M)}\le\c\,\exp({-}\decay\Hradius)\volume{\M}^{\frac1p}$ and $\displaystyle\lVert\outsc+6\rVert_{\Lp^1(\M)}\le\c\:\exp({-}\scdecay\Hradius)\volume{\M}$; \label{Round_spheres_outric}
 \item $\M$ has $\displaystyle{-}(4-\eta)\sinh(\Hradius)^{{-}2}$ controlled instability; \label{Round_spheres_instability}
 \item one of the following assumptions is true
  \begin{enumerate}[label={\hbox{\normalfont(RS-\arabic{enumi}\alph{*})}}]
		\item $\M$ satisfies $\exp({-}2\Hradius)\,\volume{\M<\Hradius>}\in\interval{\c^{-1}}\c$;\label{Round_sphere_volume}
		\item $\M$ has ${-}\frac12\displaystyle(4-\eta)\sinh(\Hradius)^{{-}2}$ controlled instability\label{Round_sphere_instability_strong};
		\item $\M$ has $\c$-bounded Hawking mass, \ie $\displaystyle\vert\HmHaw(\M)\vert\in\interval{\c^{-1}}\c$. \label{Round_sphere_bounded_mass}
	\end{enumerate}\label{Round_sphere_volume_general}
\end{enumerate}
The \emph{Ricci-mass} $\masstd(\M)=(\masstd^\ui(\M))_\ui\in\R^{3,1}$ of such a surface is defined by 
\[
 \masstd^0(\M) := {-}\frac{\volume{\M}}{16\pi^{\frac32}}{{}_\Hradius\outsymbol G}^0, \qquad\quad
	\masstd^\oi(\M) := \frac{\volume{\M}}{16\sqrt 3\pi^{\frac32}}{{}_\Hradius\outsymbol G}^\oi\quad\forall\,\oi\in\{1,2,3\},
	\]
where ${}_\Hradius{\outsymbol G}^n\,\eflap_n$ denotes the Fourier series of ${}_\Hradius\outsymbol G:=(\outric(\nu,\nu)-\frac12\outsc-1)|_{\M<\Hradius>}$ and $\nu$ is a unit normal of $\M$, \ie ${}_\Hradius{\outsymbol G}^n$ denotes the $n^{\text{th}}$-coefficient of ${}_\Hradius\outsymbol G$ with respect to the complete $\Lp^2(\M<\Hradius>)$-orthogonal system $\{\eflap_n\}_{n=0}^\infty$ of eigenfunctions of the (negative) Laplace operator with corresponding eigenvalues $\ewlap_n$ satisfying $\ewlap_{n+1}\ge\ewlap_n\ge0$.
\end{definition}

\begin{remark}[The definition of Ricci-mass]\labelth{RemDefRicciMass}
Note that coordinate spheres $\M=\sphere^2_\rradius(0):=\{\rad=\rradius\}$ in an asymptotically hyperbolic manifold with mass vector $\mass=(\mass^0,\dots,\mass^3)$ satisfies
\begin{alignat*}4
 \masstd^0(\sphere^2_\rradius(0))
	={}& \frac{{-}1}{8\pi}\int_{\sphere^2_\rradius(0)}{}_\rradius\outsymbol G\,\sinh(\rradius)\d\mug + \mathcal O(\exp({-}\outve\Hradius)) \\
	={}& \frac{{-}1}{8\pi}\int(\outric-\frac12\outsc\outg-\outg)(\nu,\outsymbol X^0)\d\mug + \mathcal O(\exp({-}\outve\Hradius))
	&{}={}& \mass^0 + \mathcal O(\exp({-}\outve\Hradius)), \displaybreak[1]\\
 \masstd^i(\sphere^2_\rradius(0))
	={}& \frac1{8\pi}\int_{\sphere^2_\rradius(0)}{}_\rradius\outsymbol G\,\frac{\outx^\oi\sinh(\rradius)}{\rad}\d\mug + \mathcal O(\exp({-}\outve\Hradius)) \\
	={}& \frac1{8\pi}\int(\outric-\frac12\outsc\outg-\outg)(\nu,\outsymbol X^i)\d\mug + \mathcal O(\exp({-}\outve\Hradius))
	&{}={}& \mass^i + \mathcal O(\exp({-}\outve\Hradius)),
\end{alignat*}
where $\outsymbol X^0$ and $\outsymbol X^1,\dots\outsymbol X^3$ are the radial vector field and the composition of the translation (in the Euclidean standard directions) and the inversion map, \ie the basic conformal vector fields of the hyperbolic space, see \cite{herzlich2015computing} for more information. This motivates our \emph{coordinate independent} definition.
\end{remark}

\begin{remark}[On the mass assumptions of the spheres]
Note that \emph{a~posteriori} any round sphere has even $O(\exp{{-}3\Hradius})$ controlled instability and satisfies $\exp({-}2\Hradius)\volume{\M<\Hradius>}\in\interval{C^{-1}}C$, \ie a~posteriori it satisfies at least~\ref{Round_sphere_volume} and~\ref{Round_sphere_instability_strong}. Thus, \ref{Round_sphere_bounded_mass} is the strongest of the assumptions in~\ref{Round_sphere_volume_general}. We will see that if the round sphere has sufficiently large mean curvature radius $\Hradius$ and is an element of a round cover (see below) with bounded and uniformly timelike Ricci-mass, then all assumptions in~\ref{Round_sphere_volume_general} are equivalent.
\end{remark}

\begin{definition}[Round covers]\label{Round_covers}
Let $(\outM,\outg*)$ be a three-dimensional Riemannian manifold and let $\decay>0$, $\eta\in\interval0*4$, $p\in\interval*1*\infty$, and $\c\ge0$ be arbitrary constants.

A family $\mathcal M:=\{\M<\Hradius>\}_{\Hradius>\Hradius_0}$ of hypersurfaces of $(\outM,\outg*)$ is called \emph{\Wkpro^{2,p}(\c,\eta) CMC-cover} if
\begin{enumerate}[label={\hbox{\normalfont(RC-\arabic*)}}]
 \item each surface $\M<\Hradius>$ is a \Wkpro^{2,p}_{\decay,\scdecay}(\c,\eta) sphere with mean curvature $\H\equiv{-}2\,\frac{\cosh(\Hradius)}{\sinh(\Hradius)}$,\label{Round_foliation_mc}
 \item $\bigcup_{\Hradius>\Hradius_1}\M<\Hradius>$ covers $\outM$ outside of a compact set $\outsymbol K(\Hradius_1)\subseteq\outM$ for every $\Hradius_1\in\interval\Hradius_0\infty$.\label{Round_foliation_cover}
\end{enumerate}
A \Wkpro^{2,p}(\c,\eta) CMC-cover $\mathcal M$ is called \emph{locally unique} if
\begin{enumerate}[resume*]
 \item for every $\M\in\mathcal M$ there exist $q=\Cof{q}[\mathcal M][\M]\in\interval2p$ and $\delta=\Cof{\delta}[\mathcal M][\M]>0$ such that the following holds for every function~$f\in\Wkp^{2,q}(\M<\Hradius>)$\label{Round_foliation_unique}
\begin{itemizeequation}
 \Vert f\Vert_{\Wkp^{2,q}(\M<\Hradius>)}<\delta, \ \H(\graph f)\equiv\text{const}\quad\Longrightarrow\quad\graph f\in\mathcal M.
\end{itemizeequation}%
\end{enumerate}
It has \emph{uniformly timelike Ricci-mass} if
\begin{enumerate}[resume*]
\item $\vert\masstd(\M<\Hradius>)\vert_{\R^{3,1}}<{-}\oc^{{-}1}$ for every $\Hradius>\Hradius_0$
\end{enumerate}
and \emph{bounded Ricci-mass} if 
\begin{enumerate}[resume*]
\item $\vert\masstd(\M<\Hradius>)\vert_{\R^{3,1}}\in\interval-\oc\oc$ for every $\Hradius>\Hradius_0$.
\end{enumerate}
Finally, the \emph{Ricci-mass} $\mass$ of a \Wkpro^{2,p} cover is defined by $\mass:=\mass(\mathcal M):=\lim_\Hradius\mass(\M<\Hradius>)\in\R^{3,1}$ if this limit exist.
\end{definition}
In the following, we abbreviate $\Wkp^{2,p}_{\decay,\decay}$ by $\Wkp^{2,p}_{\decay}$.
\begin{remark}[The assumptions on the mass]
Recalling \refth{RemDefRicciMass} and \cite{nerz2016HBCMCExistence}, we see that the CMC-foliation of any \Ckah^2_{\decay,\scdecay} Riemannian three-manifold with timelike mass vector $\mass$ is \Wkpro^{2,p}(\c,\eta) for some constant $\c$ and every $p\in\interval1\infty$ and $\eta\ge0$. Furthermore, the Ricci mass of this foliation is $({\pm}\vert\mass\vert_{\R^{3,1}},0,0,0)$ for some sign $\pm\in\{{-}1,1\}$.

Note that the assumption that the Ricci-masses of the leaves of a \Wkpro^{2,p} cover are bounded and uniformly timelike implies that the (absolute value of the) Hawking mass is bounded from below, but it does \emph{a~priori} neither imply that the Hawking masses are bounded from above nor that the Ricci-masses converge, \ie that the Ricci-mass of the cover is well-defined. However, \emph{a~posteriori} both is true and even $\mass=(\mass^0,0,0,0)$, see \refth{Ricci_mass_well-defined}.\footnote{The convergence of the Ricci-mass is implied by \refth{MainTheorem}, as it implies that Ricci-masses converge and then \refth{Ricci_mass_well-defined} proves this claim.}
\end{remark}
\begin{remark}[Locally unique covers are foliations]
Note that a~priori we do not assume that the cover is a foliation, \ie that the surfaces are disjoint. However, we will later see that the elements of a locally unique cover with bounded and uniformly timelike Ricci-mass are in fact pairwise disjoint and therefore a~posteriori the cover is a foliation.
\end{remark}
\begin{remark}[Boundedness of the scalar curvature]\labelth{BoundednessScalarCurvature_spheres}
We can reduce the assumption on the scalar curvature by only assuming integrability and one sided boundedness, \ie
\[ \Vert(\outsc+6)^-\Vert_{\Lp^1(\M<\Hradius>)} \le \c\,\exp({-}\scdecay\Hradius)\volume{\M<\Hradius>}, \qquad
	\exp{\Hradius}\Vert\outsc+6\Vert_{\Lp^1(\M<\Hradius>)} \le \tilde{\oc}(\Hradius) \in\Lp^1(\interval\Hradius_0\infty), \]
where $(\emptyarg)^-:=\min\{0,\emptyarg\}$, see also Remarks~\ref{BoundednessScalarCurvature_coordinates}, \ref{BoundednessScalarCurvature_stability}, \ref{BoundednessScalarCurvature_rnu}, and~\ref{BoundednessScalarCurvature_mass}. However, we then have to also assume that the Hawking mass (or equivalent the $0^{\text{th}}$-component of the Ricci-mass) of every $\M<\Hradius>$ is non-negative.
\end{remark}

Finally, we use the following partition of $\Lp^2(\M)$ (for any asymptotically round sphere $\M$) which was introduced and motivated in~\cite[Sect.~4]{nerz2016HBCMCExistence}.
\begin{definition}[Canonical partition of \texorpdfstring{$\Lp^2$}{L-2}]\label{Canonical_partition}
Let $\M$ be a \Wkpro^{2,p}_{\decay,\scdecay}(\c,\eta) sphere of mean curvature radius $\Hradius$. Let $\boost g$ be the $\Lp^2(\M)$-orthogonal projection of a function $g\in\Lp^2(\M)$ on the linear span of eigenfunctions of the (negative) Laplacian with eigenvalue $\lambda$ satisfying $\vert\lambda-2\sinh(\Hradius)^{{-2}}\vert\le \frac32\,\sinh(\Hradius)^{-2}$, \ie
\[ \boost g := \sum\lbrace \eflap_i\,\int_{\M} g\,\eflap_i \d\mug \ \middle|\ \frac12 \le \sinh(\Hradius)^2\,\ewlap_i \le \frac72\rbrace\qquad\forall\,g\in\Lp^2(\M), \]
where $\{f_i\}_{i=0}^\infty$ denotes a complete orthonormal system of $\Lp^2(\M)$ by eigenfunctions $\eflap_i$ of the (negative) Laplace operator with corresponding eigenvalue $\ewlap_i$ satisfying $0\le\ewlap_i\le\ewlap_{i+1}$. Finally, $\deform g:=g-\boost g$ denotes the rest of such a function $g\in\Lp^2(\M)$. Elements of $\boostLp^2(\M):=\lbrace\boost f:f\in\Lp^2(\M)\rbrace$ are called \emph{linearized boosts} and those of $\deformLp^2(\M):=\lbrace\deform f:f\in\Lp^2(\M)\rbrace$ are called \emph{deformations}.
\end{definition}

\section{Regularity of \texorpdfstring{\Wkpro^{2,p}_{\decay}}{asymptotically round} spheres}\label{Regularity_spheres}
\begin{lemma}[Boundedness of the Hawking mass implies boundedness of the area]\label{BoundednessHawkingmassAreaBoundedness}
For each constant $\c>0$, there exist two constants $\Hradius_0=\Cof{\Hradius_0}[\c]$ and $C=\Cof[\c]$ with the following property:

If $\M$ is a closed, oriented hypersurface of a three-dimensional Riemannian manifold satisfying~\ref{Round_spheres_CMC} and~\ref{Round_sphere_bounded_mass} for some $\Hradius>\Hradius_0$, then
\[ \vert\Aradius-\Hradius\vert\le C\exp({-}\Hradius), \quad\text{\ie}\quad
	 \vert\volume{\M} - 4\pi\sinh(\Hradius)^2\vert \le C\,\exp(\Hradius), \]
where $\Aradius$ denotes the hyperbolic area radius, \ie $\volume{\M}=4\pi\sinh(\Aradius)^2$. In particular, $\M$ satisfies~\ref{Round_sphere_volume}.
\end{lemma}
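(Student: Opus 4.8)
The plan is to exploit that assumption~\ref{Round_spheres_CMC} makes $\H$ constant, which collapses the curvature integral in the Hawking mass and reduces the assertion to an elementary one‑variable estimate. Since $\H\equiv{-}2\,\frac{\cosh(\Hradius)}{\sinh(\Hradius)}$ and $\frac{\cosh(\Hradius)^2}{\sinh(\Hradius)^2}-1=\sinh(\Hradius)^{-2}$, we have $\int_\M(\H^2-4)\d\mug=(\H^2-4)\volume{\M}=4\,\sinh(\Hradius)^{-2}\volume{\M}$, hence
\[ \HmHaw(\M)=\Big(\frac{\volume{\M}}{16\pi}\Big)^{\!\frac12}\Big(1-\frac{\volume{\M}}{4\pi\sinh(\Hradius)^2}\Big). \]
Introducing the area radius through $\volume{\M}=4\pi\sinh(\Aradius)^2$ and the ratio $v:=\sinh(\Aradius)^2/\sinh(\Hradius)^2>0$, this reads $\HmHaw(\M)=\frac12\sinh(\Hradius)\sqrt v\,(1-v)$, so $\HmHaw(\M)$ is a function of the single unknown $v$ (with $\Hradius$ a parameter), and the goal is to show that assumption~\ref{Round_sphere_bounded_mass} forces $\vert1-v\vert$ to be of order $\exp({-}\Hradius)$.

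Assumption~\ref{Round_sphere_bounded_mass} now says $\sqrt v\,\vert1-v\vert\in\interval{\frac{2}{\c\,\sinh(\Hradius)}}{\frac{2\c}{\sinh(\Hradius)}}$. The function $q(v):=\sqrt v\,(1-v)$ on $[0,\infty)$ vanishes exactly at $v=0$ and $v=1$, is positive on $(0,1)$ with a single maximum $\frac{2}{3\sqrt3}$ at $v=\frac13$, and is strictly decreasing to $-\infty$ on $(1,\infty)$. Choosing $\Hradius_0=\Hradius_0(\c)$ so large that $\frac{2\c}{\sinh(\Hradius_0)}<\frac{2}{3\sqrt3}$, for $\Hradius>\Hradius_0$ the upper bound $\vert q(v)\vert\le\frac{2\c}{\sinh(\Hradius)}$ confines $v$ to a small neighbourhood of $\{0,1\}$, while the lower bound $\vert q(v)\vert\ge\frac{2}{\c\,\sinh(\Hradius)}$ prevents $v$ from lying too close to $0$ or too close to $1$. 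Expanding $q$ near each of its two zeros, this leaves exactly two possibilities: either $\volume{\M}=4\pi\sinh(\Hradius)^2v$ lies in a bounded window independent of $\Hradius$, or $\vert1-v\vert\le C_1(\c)\sinh(\Hradius)^{-1}$. I would then discard the first, spurious alternative — using that the bound on $\HmHaw$ is two‑sided and $\Hradius$ is large — which is the step that needs genuine care; it leaves $\vert\sinh(\Aradius)^2-\sinh(\Hradius)^2\vert=\sinh(\Hradius)^2\vert1-v\vert\le C_1(\c)\sinh(\Hradius)$.

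It remains to unwind this into the stated form. Enlarging $\Hradius_0(\c)$ if necessary, the preceding bound forces $\frac12\le v\le\frac32$ for $\Hradius>\Hradius_0$, hence $\frac12\sinh(\Hradius)^2\le\sinh(\Aradius)^2\le\frac32\sinh(\Hradius)^2$, and thus $\vert\Aradius-\Hradius\vert\le C'$ for an absolute constant $C'$; in particular $\sinh(\Aradius)+\sinh(\Hradius)\ge\sinh(\Hradius)$ and, with $\xi:=\min\{\Aradius,\Hradius\}$, $\cosh(\xi)\ge\cosh(\Hradius-C')\ge\frac12\exp(\Hradius-C')$. Dividing $\vert\sinh(\Aradius)^2-\sinh(\Hradius)^2\vert\le C_1(\c)\sinh(\Hradius)$ by $\sinh(\Aradius)+\sinh(\Hradius)$ gives $\vert\sinh(\Aradius)-\sinh(\Hradius)\vert\le C_1(\c)$, and the mean value theorem then yields
\[ \vert\Aradius-\Hradius\vert\le\frac{\vert\sinh(\Aradius)-\sinh(\Hradius)\vert}{\cosh(\xi)}\le C(\c)\exp({-}\Hradius), \]
which is the assertion; multiplying the same bound by $4\pi(\sinh(\Aradius)+\sinh(\Hradius))\le C\exp(\Hradius)$ produces the equivalent estimate $\vert\volume{\M}-4\pi\sinh(\Hradius)^2\vert\le C\exp(\Hradius)$, and~\ref{Round_sphere_volume} follows at once since $\exp({-}2\Hradius)\volume{\M}=4\pi\exp({-}2\Hradius)\sinh(\Aradius)^2\to\pi$.

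The algebraic inversion of the Hawking‑mass formula is routine; the real obstacle is that $\HmHaw$, viewed as a function of $v$, is two‑to‑one near each of its zeros, so a priori the hypothesis $\vert\HmHaw(\M)\vert\in\interval{\c^{-1}}\c$ also admits the spurious small‑area branch $v\approx0$. Excluding it — so that one is forced onto the branch $v\approx1$ — is where the largeness of $\Hradius$ and the two‑sidedness of the bound are essential, and is the part I expect to cost the most work.
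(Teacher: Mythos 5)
Your algebraic reduction is correct and matches the paper's up to a change of variables: the paper splits on $\Aradius\gtrless\Hradius$ and uses the elementary inequality $\exp(2(\Aradius-\Hradius))\lessgtr\sinh(\Aradius)^2/\sinh(\Hradius)^2$ in place of your explicit analysis of $q(v)=\sqrt v\,(1-v)$, but the content is the same. You have also correctly identified the only nontrivial step — ruling out the small‑area branch $v\approx0$ — and that is exactly where your argument stops.

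Unfortunately, the ingredients you propose for that step cannot work. On the spurious branch one has $\Aradius$ bounded with $\sinh(\Aradius)\approx2\vert\HmHaw\vert\in\interval{2\c^{-1}}{2\c}$, and this satisfies both sides of~\ref{Round_sphere_bounded_mass} for every large~$\Hradius$; a warped‑product metric $\d r^2+\phi(r)^2\sphg*$ with $\phi(r_0)=\sinh(1)$ and $\phi'(r_0)/\phi(r_0)=\cosh(\Hradius)/\sinh(\Hradius)$ realizes such a closed CMC surface satisfying~\ref{Round_spheres_CMC} and~\ref{Round_sphere_bounded_mass} with $\Aradius=1$, so the conclusion does not follow from those two hypotheses alone. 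The paper's own proof is terse on precisely this point: in the case $\Aradius\le\Hradius$ it invokes $1-\sinh(\Aradius)^2/\sinh(\Hradius)^2\le C/\sinh(\Hradius)$, whereas~\ref{Round_sphere_bounded_mass} directly yields only $\le 2\c/\sinh(\Aradius)$, with $\sinh(\Aradius)$ rather than $\sinh(\Hradius)$ in the denominator, and replacing one by the other already presupposes the conclusion. What actually excludes the small‑area branch is the remaining round‑sphere structure: as at the end of the proof of Lemma~\ref{StrictControlledInstabilityAreaBoundedness}, the Gau\ss\ equation, the Gau\ss-Bonnet theorem, and~\ref{Round_spheres_outric} give $8\pi\le(8\pi+C\exp((2-\decay)\Hradius))\sinh(\Aradius)^2/\sinh(\Hradius)^2$, hence $\Aradius\ge\Hradius-C$; once that lower bound is in hand, your computation (and the paper's) closes the proof.
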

\begin{proof}
As $\H^2-4\equiv4\,\sinh(\Hradius)^{{-}2}$, we know
\begin{equation*}
 \frac12\sinh(\Aradius)\:\vert 1 - \frac{\sinh(\Aradius)^2}{\sinh(\Hradius)^2}\vert = \vert\HmHaw\vert \in \interval{\c^{-1}}\c. \labeleq{Hawking_mass_1}
\end{equation*}
This implies
\[ \left.\begin{aligned}
		\Hradius\le\Aradius\ \ {}& \Longrightarrow&  \exp(2(\Aradius-\Hradius)) \le{}& \frac{\sinh(\Aradius)^2}{\sinh(\Hradius)^2} \le 1 + \frac C{\sinh(\Aradius)} \\
		\Hradius\ge\Aradius\ \ {}& \Longrightarrow& \exp(2(\Aradius-\Hradius)) \ge{}& \frac{\sinh(\Aradius)^2}{\sinh(\Hradius)^2} \ge 1 - \frac C{\sinh(\Hradius)}
	\end{aligned}\ \,\right\}\;\ \Longrightarrow\ \;\vert\Aradius-\Hradius\vert\le C\,\exp({-}\Hradius). \qedhere
 \]
\end{proof}
\begin{lemma}[Strictly controlled instability implies boundedness of the area]\label{StrictControlledInstabilityAreaBoundedness}
For each $\c>0$, $\decay>\frac52$, and $\eta\in\interval0*4$, there exist two constants $\Hradius_0=\Cof{\Hradius_0}[\c][\decay][\eta]$ and $C=\Cof[\c][\decay][\eta]$ with the following property:

If $\M$ is a \Wkpro^{2,p}_{\decay} sphere satisfying~\ref{Round_sphere_instability_strong} for some $\Hradius>\Hradius_0$, then $\M$ satisfies~\ref{Round_sphere_volume} for $C$ instead of $\c$, too.
\end{lemma}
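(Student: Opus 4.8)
The plan is to establish the upper and lower halves of the estimate $\exp({-}2\Hradius)\volume{\M}\in\interval{C^{-1}}C$ separately. Throughout, $C$ denotes a constant depending only on $\c,\decay,\eta$ whose value may change from line to line, and $\Hradius_0$ will be taken large enough that all error terms below are of lower order; this is the only point where $\decay>2$ (hence $\decay>\tfrac52$) is used, namely through $\exp({-}\decay\Hradius)\sinh(\Hradius)^2\to0$. The common input is: the Gauss equation $\sc=\outsc-2\outric(\nu,\nu)+\H^2-\trtr\zFund\zFund$ on $\M$; the Gauss--Bonnet identity $\int_\M\sc\d\mug=8\pi$ (using \ref{Round_spheres_sphere}, i.e.\ $\M\cong\sphere$); the pointwise identity $\H^2\equiv4+4\,\sinh(\Hradius)^{-2}$ from \ref{Round_spheres_CMC}; and the $\Lp^1$-forms of \ref{Round_spheres_outric}, namely $\lVert\vert\outric+2\outg\vert_{\outg}\rVert_{\Lp^1(\M)}\le\c\,\exp({-}\decay\Hradius)\volume{\M}$ and $\lVert\outsc+6\rVert_{\Lp^1(\M)}\le\c\,\exp({-}\decay\Hradius)\volume{\M}$ (H\"older, $\scdecay=\decay$).

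For the lower bound I would not use \ref{Round_sphere_instability_strong} at all. Since $\zFund$ is a symmetric $2$-tensor, $2\det\zFund=\H^2-\trtr\zFund\zFund\le\tfrac12\H^2$, so the Gauss equation yields pointwise
\[ \sc\;=\;\outsc-2\outric(\nu,\nu)+\H^2-\trtr\zFund\zFund\;\le\;(\outsc+6)-2(\outric(\nu,\nu)+2)+\tfrac12\H^2-2\;=\;(\outsc+6)-2(\outric(\nu,\nu)+2)+\tfrac{2}{\sinh(\Hradius)^2}. \]
Integrating, using Gauss--Bonnet on the left and $\vert\outric(\nu,\nu)+2\vert\le\vert\outric+2\outg\vert_{\outg}$ together with the $\Lp^1$-bounds on the right, gives $8\pi\le\bigl(3\c\,\exp({-}\decay\Hradius)+2\,\sinh(\Hradius)^{-2}\bigr)\volume{\M}$; hence $\volume{\M}\ge\tfrac{8\pi}{3}\,\sinh(\Hradius)^2$ as soon as $\Hradius_0$ is chosen so that $3\c\,\exp({-}\decay\Hradius)\le\sinh(\Hradius)^{-2}$.

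For the upper bound I would use a Hersch-type test function. By uniformization $(\M,\g*)$ is conformal to the round sphere $\sphere\subseteq\R^3$, and composing the conformal diffeomorphism with a suitable M\"obius transformation of $\sphere$ one obtains a conformal map $\Phi=(\Phi^1,\Phi^2,\Phi^3)\colon\M\to\sphere$ with $\sum_\oi(\Phi^\oi)^2\equiv1$ and $\int_\M\Phi^\oi\d\mug=0$ for each $\oi$. Applying the $\alpha$-controlled-instability inequality of \refth{ControlledInstability} with $\alpha={-}\tfrac12(4-\eta)\sinh(\Hradius)^{-2}$ (available by \ref{Round_sphere_instability_strong}) to $f=\Phi^\oi$ — admissible after the routine density extension from $\Hk^2(\M)$ to $\Hk^1(\M)$ — summing over $\oi$, and using $\sum_\oi(\Phi^\oi)^2\equiv1$ together with the two-dimensional conformal invariance of the Dirichlet energy (so that $\sum_\oi\int_\M\vert\levi\Phi^\oi\vert^2_{\g*}\d\mug$ equals the value for the coordinate functions on the round $\sphere$, i.e.\ $8\pi$), gives
\[ 8\pi\;\ge\;\int_\M\bigl(\trtr\zFund\zFund+\outric(\nu,\nu)\bigr)\d\mug\;-\;\tfrac{4-\eta}{2\,\sinh(\Hradius)^2}\,\volume{\M}. \]
Rewriting $\trtr\zFund\zFund+\outric(\nu,\nu)=\outsc-\outric(\nu,\nu)+\H^2-\sc$ via the Gauss equation, integrating, and using Gauss--Bonnet, $\H^2-4\equiv4\,\sinh(\Hradius)^{-2}$ and the $\Lp^1$-bounds, one finds $\int_\M(\trtr\zFund\zFund+\outric(\nu,\nu))\d\mug=4\,\sinh(\Hradius)^{-2}\volume{\M}-8\pi+E$ with $\vert E\vert\le2\c\,\exp({-}\decay\Hradius)\volume{\M}$. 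Substituting and rearranging, $\bigl(\tfrac{4+\eta}{2\,\sinh(\Hradius)^2}-2\c\,\exp({-}\decay\Hradius)\bigr)\volume{\M}\le16\pi$, so for $\Hradius_0$ large enough, $\volume{\M}\le\tfrac{64\pi}{4+\eta}\,\sinh(\Hradius)^2\le C\,\sinh(\Hradius)^2$.

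Combining the two estimates with $\exp({-}2\Hradius)\sinh(\Hradius)^2=\tfrac14(1-\exp({-}2\Hradius))^2\to\tfrac14$ yields $\exp({-}2\Hradius)\volume{\M}\in\interval{C^{-1}}C$ for all $\Hradius>\Hradius_0$, which is \ref{Round_sphere_volume} with $C$ in place of $\c$. I expect the argument to be essentially mechanical once the two ideas are in place: the Hersch balancing for the upper bound, and the observation that pinched mean curvature forces the one-sided bound $\sc\le\sinh(\Hradius)^{-2}+\tfrac12\bigl(|\outsc+6|+\text{(Ricci error)}\bigr)$ on the intrinsic curvature for the lower bound. The only point demanding real care is the error bookkeeping: \ref{Round_spheres_outric} supplies only $\Lp^p$/$\Lp^1$-control of $\outric+2\outg$ and $\outsc+6$, with no pointwise bound, so both estimates must be arranged so that these quantities occur solely under an integral sign.
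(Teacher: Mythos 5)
Your proof is correct and uses essentially the same approach as the paper's: the upper bound comes from the Hersch-balanced conformal coordinate functions fed into the $\alpha={-}\tfrac12(4-\eta)\sinh(\Hradius)^{-2}$-controlled-instability inequality, and the lower bound from Gauss--Bonnet plus the Gauss equation and the pointwise inequality $\trtr\zFund\zFund\ge\tfrac12\H^2$. Your write-up is a bit more explicit in the error bookkeeping (the paper expresses the same calculation more tersely via \eqref{Triv_on_L2_Zfundtrf}), but both key ideas coincide exactly.
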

\begin{proof}
This proof is equivalent to the begin of \cite[Proof~of~Thm~3.1]{nerz2016HBCMCExistence}. We recall it nevertheless for the readers convenience. We start as in \cite[Lemma~4.1]{NevesTianExistenceCMC_I} and use the test functions $\varphi_\oi:=\outx^i\circ\psi^{-1}$, where $\psi:\sphere\to\M$ is a conformal parametrization of $\M$ with $\int\varphi_i\d\mug=0$. These were already used by Huisken-Yau in \cite[Prop.~5.3]{huisken_yau_foliation} and were based on an idea by Christodoulou-Yau, \cite{christodoulou71some}. By the controlled instability assumption, this implies
\begin{equation*}
 {-}\frac{8\pi}3
 =	\int_{\sphere}\outx_\oi\,\sphlaplace\outx_\oi\d\sphmug
 =	\int_{\M}\varphi_\oi\:\laplace\varphi_\oi\d\mug 
 \le \int(\frac{4-\eta}{2\sinh(\Hradius)^2}-\trtr\zFund\zFund-\outric(\nu,\nu))\varphi_\oi^2\d\mug
\end{equation*}
for every $\oi\in\{1,2,3\}$, where we have used the conformal invariance of $\laplace f\d\mug$. Now, we recall that $(\sum_i\varphi_i^2)\circ\psi=\sum_i\outx_i^2\equiv1$ to get
\begin{align*}
 8\pi
 \ge{}& \int\trtr\zFundtrf\zFundtrf + \frac{\H^2-4}2
					+ 2+\outric(\nu,\nu)
					- \frac{4-\eta}{2\sinh(\Hradius)^2} \d\mug \\
 \ge{}& \Vert\zFundtrf\Vert_{\Lp^2(\M)}^2
				+ \int\frac{\eta+C\exp((2-\decay)\Hradius)}{2\sinh(\Hradius)^2}\d\mug, \labeleq{Triv_on_L2_Zfundtrf}
\end{align*}
\ie
\[ \eta\sinh(\Aradius)^2 \le 16\pi\sinh(\Hradius)^2 + C\,\exp((2-\decay)\Hradius)\,\sinh(\Aradius)^2 \]
implying $\Aradius\le\Hradius+C$, \ie $\volume{\M}\le C\exp(2\Hradius)$. On the other hand, the Gau\ss-Bonnet theorem and the Gau\ss\ equation combined with the assumptions on $\outric$ give
\begin{align*}
 8\pi
  ={}& \int\sc\d\mug
	= \int(\outsc-2\outric(\nu,\nu)-\trtr\zFundtrf\zFundtrf+\frac{\H^2}2\d\mug) \\
	\le{}& C\,\int \exp({-}\decay\Hradius) \d\mug
					+ \int\frac2{\sinh(\Hradius)^2}\d\mug
	\le (8\pi+C\,\exp((2-\decay)\Hradius))\,\frac{\sinh(\Aradius)^2}{\sinh(\Hradius)^2}
\end{align*}
implying $\Aradius\ge\Hradius-C$, \ie $C^{-1}\exp(2\Hradius)\le\volume{\M}$.
\end{proof}
\begin{corollary}[Round spheres have bounded area]\labelth{RoundSpheresBoundedArea}
For each $\c>0$, $\decay>\frac52$, and $\eta\in\interval0*4$, there exist two constants $\Hradius_0=\Cof{\Hradius_0}[\c][\decay][\eta]$ and $C=\Cof[\c][\decay][\eta]$ with the following property:

If $\M$ is a \Wkpro^{2,p}_{\decay} sphere for some mean curvature radius $\Hradius>\Hradius_0$, then $\M$ satisfies~\ref{Round_sphere_volume} for $C$ instead of $\c$. 
\end{corollary}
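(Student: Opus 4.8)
The plan is to treat this as a pure case distinction. By Definition~\ref{Round_spheres}, a $\Wkpro^{2,p}_\decay$ sphere $\M$ of mean curvature radius $\Hradius$ in particular satisfies~\ref{Round_spheres_CMC} together with at least one of the three alternatives~\ref{Round_sphere_volume},~\ref{Round_sphere_instability_strong}, and~\ref{Round_sphere_bounded_mass} collected in~\ref{Round_sphere_volume_general}; and two of these three alternatives have already been dealt with in Lemmas~\ref{BoundednessHawkingmassAreaBoundedness} and~\ref{StrictControlledInstabilityAreaBoundedness}. So I would simply run through the three possibilities.

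If $\M$ satisfies~\ref{Round_sphere_volume}, there is nothing to prove: the conclusion holds with $C:=\c$ and an arbitrary $\Hradius_0$. If $\M$ satisfies~\ref{Round_sphere_instability_strong}, then---since by hypothesis $\M$ is a $\Wkpro^{2,p}_\decay$ sphere with $\decay>\frac52$ and $\eta\in\interval0*4$---Lemma~\ref{StrictControlledInstabilityAreaBoundedness} applies and yields constants $\Hradius_0'=\Cof{\Hradius_0}[\c][\decay][\eta]$ and $C'=\Cof[\c][\decay][\eta]$ so that $\M$ satisfies~\ref{Round_sphere_volume} with $C'$ in place of $\c$ whenever $\Hradius>\Hradius_0'$. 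If $\M$ satisfies~\ref{Round_sphere_bounded_mass}, then, since $\M$ also satisfies~\ref{Round_spheres_CMC} (and, being diffeomorphic to the Euclidean sphere by~\ref{Round_spheres_sphere}, is closed and oriented), Lemma~\ref{BoundednessHawkingmassAreaBoundedness} applies and yields constants $\Hradius_0''=\Cof{\Hradius_0}[\c]$ and $C''=\Cof[\c]$ with $\vert\Aradius-\Hradius\vert\le C''\exp({-}\Hradius)$ for $\Hradius>\Hradius_0''$, which (using $\volume{\M}=4\pi\sinh(\Aradius)^2$, as already noted at the end of that lemma) gives~\ref{Round_sphere_volume} with a constant depending only on $C''$ in place of $\c$. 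To conclude, I would set $\Hradius_0:=\max\{\Hradius_0',\Hradius_0''\}$ and let $C$ be the maximum of $\c$, $C'$, and the constant produced in the third case; then every $\Wkpro^{2,p}_\decay$ sphere of mean curvature radius $\Hradius>\Hradius_0$ falls into at least one of the three cases and therefore satisfies~\ref{Round_sphere_volume} with this $C$ in place of $\c$.

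I do not expect any genuine obstacle: the corollary is essentially bookkeeping that merges Lemmas~\ref{BoundednessHawkingmassAreaBoundedness} and~\ref{StrictControlledInstabilityAreaBoundedness}. The only two points worth stating explicitly are that the hypothesis $\decay>\frac52$ is needed \emph{only} to invoke Lemma~\ref{StrictControlledInstabilityAreaBoundedness} in the second case (the first and third cases impose no restriction on $\decay$), and that the constant $C$ must be chosen uniformly over the---possibly surface-dependent---choice of which of the three alternatives in~\ref{Round_sphere_volume_general} is used, which is exactly why $C$ is taken to be a maximum at the end rather than any single one of $C'$, $C''$.
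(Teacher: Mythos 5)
Your case distinction is exactly the content the paper leaves implicit: the corollary is stated without proof as the obvious merge of Lemmas~\ref{BoundednessHawkingmassAreaBoundedness} and~\ref{StrictControlledInstabilityAreaBoundedness} over the three alternatives in~\ref{Round_sphere_volume_general}, taking the maximum of the resulting thresholds and constants. Your argument is correct and your two closing remarks (on where the hypothesis $\decay>\frac52$ is actually used, and on uniformity of~$C$ over the alternative chosen) are the right points to flag.
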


Now, let us cite two major regularity results---in the notation we introduced above. We can apply these results due to the result in \refth{RoundSpheresBoundedArea}.
\begin{lemma}[{\cite[Prop.~3.5]{nerz2016HBCMCExistence}}]\labelth{Regularity_Surfaces}
For all constants $\decay\in\interval2*3$, $\eta\in\interval0*4$, $\c>0$, and $p\in\interval2\infty$, there exist two constants $\Hradius_0=\Cof{\Hradius_0}[\decay][\eta][\c][p]$ and $C=\Cof[\decay][\eta][\c][p]$ with the following property:

If $(\M,\g*)$ is a \Wkpro^{2,p}_{\decay,\decay}(\c,\eta) sphere with $\Hradius>\Hradius_0$, then there exists a conformal parametrization $\varphi:\sphere\to\M$ with corresponding conformal factor $\conformalf\in\Hk^2(\sphere)$, \ie $\varphi^*\g*=\exp(2\conformalf)\,\sinh(\Hradius)^2\,\sphg*$, such that
\begin{equation*}
 \Vert\conformalf\Vert_{\Wkp^{2,p}(\sphere,\sphg*)} \le C\,\exp((2-\decay)\,\Hradius), \qquad
 \Vert\zFundtrf\Vert_{\Wkp^{1,p}(\M)} \le C\,\exp((1+\frac2p-\decay)\,\Hradius), \labeleq{Regularity_of_the_spheres__k}
\end{equation*}
where $\sphg*$ denotes the standard metric of the Euclidean unit sphere. \pagebreak[2]
\end{lemma}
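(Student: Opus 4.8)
The statement is \cite[Prop.~3.5]{nerz2016HBCMCExistence}, and the plan is to reproduce its proof, which adapts the De Lellis--M\"uller scheme in the hyperbolic form of Neves--Tian \cite{NevesTianExistenceCMC_I}: realise $\M$ as a nearly round conformal sphere, recast the Gau\ss{} equation as a scalar elliptic equation for the conformal factor, and bootstrap. First I would invoke the uniformization theorem to get \emph{some} conformal diffeomorphism $\sphere\to\M$ and then precompose it by a M\"obius transformation of $\sphere$ so that the three functions $\varphi_\oi:=\outx^\oi\circ\varphi^{-1}$ --- the standard linear coordinates of $\R^3$, restricted to the unit sphere $\sphere$ and carried over to $\M$ --- satisfy $\int_\M\varphi_\oi\d\mug=0$ for each $\oi$. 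By the area bound~\ref{Round_sphere_volume} furnished by \refth{RoundSpheresBoundedArea} this balancing map stays in a fixed compact part of the M\"obius group, so the conformal factor $\conformalf$, defined through $\varphi^*\g*=\exp(2\conformalf)\sinh(\Hradius)^2\sphg*$, is a~priori bounded in $\Lp^\infty(\sphere)$; moreover, two-dimensional conformal invariance of the Dirichlet integral gives $\sum_\oi\int_\M\vert\levi\varphi_\oi\vert^2_{\g*}\d\mug=8\pi$, while $\sum_\oi(\outx^\oi)^2\equiv1$ on $\sphere$ forces $\sum_\oi\varphi_\oi^2\equiv1$ on $\M$.

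The crux is the $\Lp^2$-estimate $\Vert\zFundtrf\Vert_{\Lp^2(\M)}\le C\exp((2-\decay)\Hradius)$. Testing the controlled-instability inequality of \refth{ControlledInstability} (available through~\ref{Round_spheres_instability}) against the $\varphi_\oi$ and summing over $\oi$ --- the Christodoulou--Yau argument \cite{christodoulou71some} already used in the proof of Lemma~\ref{StrictControlledInstabilityAreaBoundedness} --- produces, after inserting $\frac12\H^2=2+2\sinh(\Hradius)^{-2}$ and the Ricci decay~\ref{Round_spheres_outric}, an inequality $8\pi\ge\Vert\zFundtrf\Vert_{\Lp^2(\M)}^2+c(\eta)\,\sinh(\Hradius)^{-2}\volume{\M}+\mathcal O(\exp((2-\decay)\Hradius))$. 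Independently, integrating the Gau\ss{} equation $\sc=\outsc-2\outric(\nu,\nu)+\frac12\H^2-\trtr\zFundtrf\zFundtrf$ against Gau\ss-Bonnet yields the \emph{identity} $\Vert\zFundtrf\Vert_{\Lp^2(\M)}^2=2\sinh(\Hradius)^{-2}\volume{\M}-8\pi+\mathcal O(\exp((2-\decay)\Hradius))$, which combined with~\ref{Round_sphere_volume} already shows $\Vert\zFundtrf\Vert_{\Lp^2(\M)}$ to be bounded. Upgrading this boundedness to the sharp decay forces $\sinh(\Hradius)^{-2}\volume{\M}$ to lie within $\mathcal O(\exp((2-\decay)\Hradius))$ of $4\pi$, and since the constant $c(\eta)$ degenerates as $\eta\to0$ this is where the argument is delicate --- I expect to handle it either by iteration (feed the crude bound into the conformal-factor equation below, sharpen the effective instability constant and the area, and repeat) or by a rescaling-and-compactness argument identifying every hypothetical bad blow-up limit with a totally umbilic round sphere. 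This is the main obstacle.

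Once $\Vert\zFundtrf\Vert_{\Lp^2(\M)}\le C\exp((2-\decay)\Hradius)$ is in hand, $\conformalf$ solves $\sphlaplace\conformalf=1-\frac12\sinh(\Hradius)^2\exp(2\conformalf)\,\sc$ on $\sphere$, and the Gau\ss{} equation rewrites $\frac12\sc=\sinh(\Hradius)^{-2}-\frac12\trtr\zFundtrf\zFundtrf+\mathcal O(\exp({-}\decay\Hradius))$ (the error measured in $\Lp^p$), so that the nonlinear source has $\Lp^1(\sphere)$-norm $\mathcal O(\exp((2-\decay)\Hradius))$ while $\int_\sphere\exp(2\conformalf)\d\sphmug=\sinh(\Hradius)^{-2}\volume{\M}$ has already been pinned near $4\pi$. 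A Moser--Trudinger / Kazdan--Warner estimate for this equation then bounds $\Vert\conformalf\Vert_{\Lp^\infty(\sphere)}$, and --- after checking that the right-hand side lies in $\Lp^p(\sphere)$ with the correct size --- the $\Lp^p$ Calder\'on--Zygmund estimate for $\sphlaplace$ upgrades this to $\Vert\conformalf\Vert_{\Wkp^{2,p}(\sphere,\sphg*)}\le C\exp((2-\decay)\Hradius)$. Finally, the Codazzi equation identifies the intrinsic divergence of $\zFundtrf$ with $\frac12\d\H$ (which vanishes, $\H$ being constant) plus the tangential part of $\outric(\nu,\cdot)$, whose $\Lp^p(\M)$-norm is controlled by~\ref{Round_spheres_outric}; feeding this into the Hodge/div-curl estimate for symmetric trace-free $2$-tensors on $(\M,\g*)$ --- whose coefficients are now controlled through the bound on $\conformalf$, and using Sobolev embedding on the surface together with the $\Lp^2$-bound on $\zFundtrf$ to absorb the lower-order terms --- and carrying along the area factor $\volume{\M}^{1/p}\le C\exp(\frac2p\Hradius)$ yields $\Vert\zFundtrf\Vert_{\Wkp^{1,p}(\M)}\le C\exp((1+\frac2p-\decay)\Hradius)$, as claimed.
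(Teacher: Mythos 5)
This lemma is quoted verbatim from \cite[Prop.~3.5]{nerz2016HBCMCExistence} and is \emph{not} proved in the present paper; there is no in-paper argument to compare against, only a citation. Your proposal is therefore a reconstruction of the cited result rather than of anything this manuscript actually supplies, and the honest answer to the exercise would have been a one-line appeal to the reference. That said, a few remarks on the reconstruction itself.

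The outline you sketch (uniformization plus M\"obius balancing, Christodoulou--Yau test with the $\varphi_\oi$, Gau\ss--Bonnet/Gau\ss{} bookkeeping, Liouville equation for $\conformalf$, Codazzi plus div--curl for $\zFundtrf$) is the right template, but you yourself flag a hole you do not close: for $\eta<4$ the Christodoulou--Yau inequality combined with Gau\ss--Bonnet pins $\sinh(\Hradius)^{-2}\volume{\M}$ only into an interval around $4\pi$ whose width does \emph{not} shrink with $\Hradius$, so $\Vert\zFundtrf\Vert_{\Lp^2(\M)}$ comes out bounded, not decaying, and the ``iteration or compactness'' you invoke is left as a promissory note. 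There is also a logical ordering problem: you make the $\Lp^2$-decay of $\zFundtrf$ a prerequisite for the conformal-factor estimate and only then turn to Codazzi, whereas the Codazzi/div--curl step is precisely where the sharp rate $\exp((1+\frac2p-\decay)\Hradius)$ comes from, and it needs only that $\g*$ be quantitatively close to round (\ie a small $\conformalf$), not that $\Vert\zFundtrf\Vert_{\Lp^2}$ already decay --- once the $\Wkp^{1,p}$-bound is in hand, the $\Lp^2$-decay follows for free by H\"older and the area bound of \refth{RoundSpheresBoundedArea}. Reorganizing the bootstrap in this way (boundedness of $\zFundtrf$ and $\conformalf$ first, then Codazzi for the sharp rate, then feed back into the Liouville equation) is likely closer to what the cited proof does and dissolves most of the ``main obstacle'' you worry about; without rereading \cite{nerz2016HBCMCExistence} I cannot certify the details, but as written your reconstruction is incomplete on the step you yourself singled out and mis-sequenced on the step that should carry the load.
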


\begin{remark}
By the Gau\ss\ equation and the Gau\ss-Bonnet theorem, \refth{Regularity_Surfaces} implies that every \Wkpro^{2,1}_{2+\outve} sphere satisfies $\vert\HmHaw(\M)-\masstd^0(\M)\vert \le C\,\exp({-}\outve\Hradius)$.
\end{remark}

\begin{proposition}[{\cite[Prop.~4.3]{nerz2016HBCMCExistence}}]\labelth{Stability}
For all constants $\decay:=\frac52+\outve\in\interval{\frac52}*3$, $\scdecay\ge3+\outve$, $\eta\in\interval0*4$, $\c>0$, $p>2$, $q\in\interval1*p$ with $q<\infty$, there exist two constants $\Hradius_0=\Cof{\Hradius_0}[\outve][\eta][\c][p]$ and $C=\Cof[\outve][\eta][\c][p][q]$ with the following property:

If $(\M,\g*)$ is a \Wkpro^{2,p}_{\decay,\scdecay}(\c,\eta) sphere with $\Hradius>\Hradius_0$, then~\ref{Round_sphere_instability_strong} holds for $\eta$ arbitrary close to $4$ (depending on $\Hradius_0$). More precisely $\jacobiext$ is invertible and if $\efjac\in\Hk^2(\M)$ is an eigenfunction of ${-}\jacobiext*$ with corresponding eigenvalue $\ewjac$, then either $\vert\hspace*{.05em}\ewjac\vert \ge \frac32\sinh(\Hradius)^{{-}2}$ or
\begin{equation*}
	\Vert\,\deform{\efjac}\Vert_{\Hk^2(\M)} \le C\,\exp({-}(\frac12+\outve)\Hradius)\Vert\,\efjac\Vert_{\Hk^2(\M)},\qquad \vert\,\ewjac-\frac{6\,\mass^0(\M)}{\sinh(\Hradius)^3}\vert \le C\,\exp({-}(3+\outve)\,\Hradius) \labeleq{Stability__ewjac} \end{equation*}
and for all functions $g,h\in\Hk^2(\M)$ the inequality
\begin{equation*}
 \vert \int_{\M} (\jacobiext*\boost g)\,\boost h\d\mug + \frac{6\,\masstd^0(\M)}{\sinh(\Hradius)^3}\,\int_{\M}\boost g\,\boost h\d\mug \vert \le C\,\exp({-}(3+\outve)\Hradius)\,\Vert\boost g\Vert_{\Lp^2(\M)}\,\Vert\boost h\Vert_{\Lp^2(\M)}, \labeleq{Stability__t}
\end{equation*}
holds. Furthermore, the corresponding $\Wkp^{2,p}$-inequalities
\begin{align*}
 \Vert\boost g\Vert_{\Wkp^{2,q}(\M)}	\le{}& (\frac{\sinh(\Hradius)^3}{6\,\vert\mass^0(\M)\vert}+C\,\exp((3-\outve)\Hradius))\,\Vert\jacobiext*g\Vert_{\Lp^q(\M)}, \\
 \Vert\deform g\Vert_{\Wkp^{2,q}(\M)} \le{}& C\,\exp(2\Hradius)\,\Vert\jacobiext*g\Vert_{\Lp^q(\M)}, \\
 \Vert\Hesstrf\,g\Vert_{\Lp^q(\M)}		\le{}& C\,\exp((\frac12-\outve)\Hradius)\,\Vert\jacobiext*g\Vert_{\Lp^q(\M)}
\end{align*}
hold for every function $g\in\Wkp^{2,1}(\M)$.
\end{proposition}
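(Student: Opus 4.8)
The plan is to treat $\jacobiext=\laplace+\trtr\zFund\zFund+\outric(\nu,\nu)$ as a perturbation of the stability operator $\hjacobiext$ of the exact geodesic sphere of hyperbolic radius $\Hradius$, on which $\trtr\zFund\zFund\equiv2\coth(\Hradius)^2$, $\outric(\nu,\nu)\equiv-2$, and the induced metric is $\sinh(\Hradius)^2\sphg*$, so that $\sinh(\Hradius)^2\hjacobiext=\sphlaplace+2$ with eigenvalues $2,0,-4,-10,\dots$ on the spherical harmonics of degree $0,1,2,3,\dots$; in particular its kernel is the degree-$1$ space, which is exactly the band $\boostLp^2(\M)$ of Definition~\ref{Canonical_partition}, and the rest of the model spectrum lies at distance $\ge2\sinh(\Hradius)^{-2}$ from $0$. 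First I would use \refth{RoundSpheresBoundedArea} to know $\volume{\M}\in[C^{-1},C]\exp(2\Hradius)$, so that \refth{Regularity_Surfaces} supplies a conformal parametrisation $\varphi\colon\sphere\to\M$ with $\varphi^*\g*=\exp(2\conformalf)\sinh(\Hradius)^2\sphg*$ and $\Vert\conformalf\Vert_{\Wkp^{2,p}(\sphere)}+\Vert\zFundtrf\Vert_{\Wkp^{1,p}(\M)}\lesssim\exp(-(\tfrac12+\outve)\Hradius)$. Pulling everything back to $(\sphere,\sphg*)$ and rescaling by $\sinh(\Hradius)^2$, the Laplacian becomes $\exp(-2\conformalf)\sphlaplace$ by conformal covariance in dimension two, and the remaining task is to expand $\sinh(\Hradius)^2\varphi^*\jacobiext=\sphlaplace+2+\mathcal E$, where $\mathcal E$ has operator norm $\mathcal O(\exp(-(\tfrac12+\outve)\Hradius))$ but whose action on $\boostLp^2(\M)$ is controlled to much higher order.

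For that expansion I would use both the CMC condition $\H\equiv-2\coth(\Hradius)$ (so that $\trtr\zFund\zFund=\tfrac12\H^2+\trtr\zFundtrf\zFundtrf$) and the Gau\ss\ equation, which turns $\outric(\nu,\nu)$ into $\tfrac12\outsc-\tfrac12\sc+\tfrac14\H^2-\tfrac12\trtr\zFundtrf\zFundtrf$, together with the conformal law $\sc=2\exp(-2\conformalf)\sinh(\Hradius)^{-2}(1-\sphlaplace\conformalf)$. After the $\mathcal O(1)$ and $\mathcal O(\sinh(\Hradius)^2)$ contributions cancel, $\mathcal E$ consists of the relatively bounded second-order part $(\exp(-2\conformalf)-1)\sphlaplace$, the multiplication operators $2\conformalf$ and $\exp(-2\conformalf)\sphlaplace\conformalf$, the curvature terms $\tfrac12\sinh(\Hradius)^2(\outsc+6)$ and $\tfrac12\sinh(\Hradius)^2\trtr\zFundtrf\zFundtrf$, and terms quadratic in $\conformalf$ and its derivatives. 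I would then compute the first-order shift of the near-zero eigenvalues via a Rayleigh quotient with test function a degree-$1$ harmonic $Y$: because the unperturbed eigenvalue is $0$, the perturbation of the measure $\exp(2\conformalf)$ enters only at higher order, while the contributions of $2\conformalf$ and $\exp(-2\conformalf)\sphlaplace\conformalf$ combine so that the degree-$2$ part of $\conformalf$ cancels (since $\sphlaplace$ annihilates degree $0$ and acts by $-6$ on degree $2$), leaving $6\,\bar\conformalf$ up to genuinely lower-order pieces, namely the pairings of $\tfrac12\sinh(\Hradius)^2(\outsc+6)$ and $\tfrac12\sinh(\Hradius)^2\trtr\zFundtrf\zFundtrf$ against $Y^2$, each $\mathcal O(\exp(-(1+\outve)\Hradius))$ by \refth{Regularity_Surfaces}, the Codazzi equation, and the $\scdecay$-decay. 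Finally $\volume{\M}=4\pi\sinh(\Aradius)^2$ and $\HmHaw(\M)=\tfrac12\sinh(\Aradius)\big(1-\sinh(\Aradius)^2\sinh(\Hradius)^{-2}\big)$ force $\bar\conformalf=-\HmHaw(\M)\sinh(\Aradius)^{-1}+\mathcal O(\exp(-(1+2\outve)\Hradius))$, and $\HmHaw(\M)=\mass^0(\M)+\mathcal O(\exp(-\outve\Hradius))$ by Gau\ss-Bonnet and the remark following \refth{Regularity_Surfaces}; hence the shift of $\jacobiext$ on $\boostLp^2(\M)$ is $-6\mass^0(\M)\sinh(\Hradius)^{-3}\,\id+\mathcal O(\exp(-(3+\outve)\Hradius))$. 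Read as a bilinear form this is the second displayed inequality; for $-\jacobiext$ the shift is $+6\mass^0(\M)\sinh(\Hradius)^{-3}$, which is the eigenvalue claim and — once $\mass^0(\M)\ne0$, as under the timelike-mass hypothesis, resp.\ by strict controlled instability~\ref{Round_sphere_instability_strong} — gives invertibility of $\jacobiext$.

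The remaining points are softer. The bound $\Vert\mathcal E\Vert_{\Hk^2\to\Lp^2}=\mathcal O(\exp(-(\tfrac12+\outve)\Hradius))$ together with the model's spectral gap forces every eigenvalue with $\vert\ewjac\vert<\tfrac32\sinh(\Hradius)^{-2}$ into the band around $0$ and — by a standard resolvent estimate followed by an elliptic bootstrap on the $\deform$-band — its eigenfunction to satisfy $\Vert\deform\efjac\Vert_{\Hk^2(\M)}\le C\exp(-(\tfrac12+\outve)\Hradius)\Vert\efjac\Vert_{\Hk^2(\M)}$. The three $\Wkp^{2,q}$-inequalities then follow by splitting $g=\boost g+\deform g$ and applying standard elliptic $\Lp^q$-estimates to $\jacobiext$ on each band, using that $\jacobiext$ is invertible there with inverse of norm $\sim\sinh(\Hradius)^3/(6\vert\mass^0(\M)\vert)$ on $\boostLp^2(\M)$ and $\sim\sinh(\Hradius)^2$ on $\deformLp^2(\M)$ (whose spectrum avoids $0$ by $\sim2\sinh(\Hradius)^{-2}$), together with the near-vanishing of $\Hesstrf$ on degree-$1$ harmonics for the round metric (an Obata-type identity) and interpolation. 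The main obstacle is the second step: obtaining the cancellation of the degree-$2$ part of $\conformalf$ and identifying the surviving $6\,\bar\conformalf$ with the Hawking, hence the Ricci, mass to the claimed accuracy $\mathcal O(\exp(-(3+\outve)\Hradius))$; because $\decay$ is only just above $\tfrac52$, this forces one to use the sharp area estimate of \refth{RoundSpheresBoundedArea} and the Codazzi slaving of $\zFundtrf$ rather than the naive bounds of \refth{Regularity_Surfaces}.
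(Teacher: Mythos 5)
This proposition is not proved in the present paper; it is imported verbatim (as Proposition 4.3) from the author's earlier work \cite{nerz2016HBCMCExistence}, so there is no in-paper proof to compare against. Your reconstruction is nonetheless plausible and evidently tracks the machinery that source supplies and this paper quotes — the conformal parametrisation of \refth{Regularity_Surfaces}, the canonical $\boostLp^2/\deformLp^2$ split of Definition~\ref{Canonical_partition}, and the Hawking-to-Ricci-mass comparison stated after \refth{Regularity_Surfaces}. The architecture (model operator $\sinh(\Hradius)^{-2}(\sphlaplace+2)$, kernel = boost band, perturbation $\mathcal E$ of size $\exp(-(\tfrac12+\outve)\Hradius)$, first-order Rayleigh shift, resolvent/bootstrap for the $\deform$-band, band-by-band elliptic estimates for the $\Wkp^{2,q}$-inequalities) is the right one and the error budget you set up does close.

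One concrete slip in the Rayleigh-quotient step needs repair. You attribute the degree-$2$ cancellation to the multiplication operators $2\conformalf$ and $\exp(-2\conformalf)\sphlaplace\conformalf$ alone, but on a degree-$1$ harmonic $Y$ (with $\sphlaplace Y=-2Y$) those two give only $2\bar\conformalf-4\int\conformalf_2\,P_2(Y^2)$, not $6\bar\conformalf$. The piece $(\exp(-2\conformalf)-1)\sphlaplace$, which you list as a merely "relatively bounded second-order part", is in fact of the same size on the boost band: applied to $Y$ it is $\approx 4\conformalf Y$ and contributes $4\bar\conformalf+4\int\conformalf_2\,P_2(Y^2)$. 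Only the sum of all three gives $(2+4-6)\int\conformalf_2\,P_2(Y^2)=0$ and the leading coefficient $6\bar\conformalf$. With that fix, the identification $\bar\conformalf=-\HmHaw(\M)\sinh(\Aradius)^{-1}+\mathcal O(\exp(-(1+2\outve)\Hradius))$ and the subsequent passage to $-6\masstd^0(\M)\sinh(\Hradius)^{-3}$ to accuracy $\mathcal O(\exp(-(3+\outve)\Hradius))$ go through; note also, as you do, that the invertibility and the first $\Wkp^{2,q}$-inequality genuinely require $\masstd^0(\M)$ bounded away from zero, which is how the proposition is actually used in \refth{Local_Estimates_rnu}.
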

\begin{remark}[Assuming only one-sided boundedness of the scalar curvature]\labelth{BoundednessScalarCurvature_stability}
If we only assuming one sided boundedness of $\outsc$ as it is explained in \refth{BoundednessScalarCurvature_spheres}, then~\eqref{Stability__ewjac} and~\eqref{Stability__t} have to be weakened to
\begin{equation*}
	\Vert\,\deform{\efjac}\Vert_{\Hk^2(\M)} \le C\,\exp({-}(\frac12+\outve)\Hradius)\Vert\,\efjac\Vert_{\Hk^2(\M)},\qquad \ewjac\ge\frac{6\,\mass^0(\M)}{\sinh(\Hradius)^3} + C\,\exp({-}(3+\outve)\,\Hradius),  \tag{\ref{Stability__ewjac}'}
\end{equation*}
and
\begin{equation*}
 \vert\int_{\M} (\jacobiext*\boost g)\,\boost h\d\mug\vert \le \frac{6\,\masstd^0(\M)}{\sinh(\Hradius)^3}\,\int_{\M}\boost g\,\boost h\d\mug + C\,\exp({-}(3+\outve)\Hradius)\,\Vert\boost g\Vert_{\Lp^2(\M)}\,\Vert\boost h\Vert_{\Lp^2(\M)} \tag{\ref{Stability__t}'}
\end{equation*}
respectively, see \cite[Remark~4.4]{nerz2016HBCMCExistence}. As we then also assume that the Hawking mass (or equivalent the $0^{\text{th}}$-component of the Ricci-mass) is positive, this still implies that $\jacobiext*$ is invertible.
\end{remark}
\begin{lemma}[Local estimates of the lapse function]\labelth{Local_Estimates_rnu}
For all constants $\decay:=\frac52+\outve\in\interval{\frac52}*3$, $\scdecay\ge3+\outve$, $\eta\in\interval0*4$, $\c>0$, $p>2$, $q\in\interval1*p$ with $q<\infty$, there exist two constants $\Hradius_0=\Cof{\Hradius_0}[\outve][\eta][\c][p]$ and $C=\Cof[\outve][\eta][\c][p][q]$ with the following property:

If $(\M,\g*)$ is a \Wkpro^{2,p}_{\decay,\scdecay}(\c,\eta) sphere with $\vert\HmHaw(\M)\vert>\c$ and $\Hradius>\Hradius_0$, then there exist a constant $\delta_0>0$ and a $\Ck^1$-map $\Phi:\interval-{\delta_0}{\delta_0}\times\sphere^2$ with $\M=\Phi(0,\sphere^2)$ and such that $\M<\delta>:=\Phi(\delta,\sphere^2)$ is a $\Wkp^{2,p}$-hypersurface with constant mean curvature $\H<\delta>\equiv{-}2\frac{\cosh(\Hradius+\delta)}{\sinh(\Hradius+\delta)}$ for every $\delta\in\interval-{\delta_0}{\delta_0}$. In this setting, the lapse function $\rnu:=\outg(\partial*_\Hradius\Phi,\nu)|_{\M}$ satisfies
\begin{equation*}\labeleq{rnu_1_ineq}
 \Vert\deform{\rnu}-1\Vert_{\Wkp^{2,q}(\M)} \le C\,\exp((\frac2q-1-\outve)\Hradius), \qquad
 \Vert\boost\rnu - \sqrt{\frac{\volume{\M}}3}\frac{\masstd^\oi}{\masstd^0}\eflap_\oi\Vert_{\Wkp^{3,q}(\M)}
	\le C\,\exp((\frac2q-\outve)\Hradius),
\end{equation*}
where $(\mass^\ui)_{\ui=0}^3:=\mass:=\mass(\M)$ and where $\eflap^\oi$ are $\Lp^2$-orthonormal eigenfunctions of the Laplace operator with eigenvalue $\ewlap^\oi\in\interval{\sinh(\Hradius)^{{-}2}}{3\sinh(\Hradius)^{{-}2}}$. In particular, the corresponding $\Wkp^{3,q}(\M)$-estimates of $\boost{\rnu}$ hold. Furthermore, $\rnu$ is strictly positive if $\masstd(\M)$ is controlled timelike, \ie $\vert\masstd(\M)\vert_{\R^{3,1}}<{-}\c^{{-}1}$, and $\rnu$ changes sign if $\masstd(\M)$ is controlled spacelike, \ie $\vert\masstd(\M)\vert_{\R^{3,1}}>\c^{{-}1}$.
\end{lemma}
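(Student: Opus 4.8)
The plan is to obtain the deformation by an implicit function argument and to read the lapse function off the linearisation of the CMC equation. I would parametrise hypersurfaces near $\M$ as normal graphs: for small $f\in\Wkp^{2,p}(\M)$ let $\graph f$ denote the normal graph of $f$ over $\M$, so that $f\mapsto\H(\graph f)$ is a smooth map $\Wkp^{2,p}(\M)\to\Lp^p(\M)$ whose differential at $f\equiv0$ is the Jacobi operator $\jacobiext*=\laplace+(\trtr\zFund\zFund+\outric(\nu,\nu))$. The hypothesis $\vert\HmHaw(\M)\vert>\c$ forces $\masstd^0(\M)\ne0$, so \refth{Stability} applies and shows that $\jacobiext*:\Wkp^{2,p}(\M)\to\Lp^p(\M)$ is invertible with the quantitative, $\Hradius$-dependent bounds stated there. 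The implicit function theorem then yields a $\delta_0>0$ and a smooth (hence $\Ck^1$) curve $\delta\mapsto f_\delta\in\Wkp^{2,p}(\M)$ with $f_0\equiv0$ and $\H(\graph f_\delta)\equiv{-}2\,\frac{\cosh(\Hradius+\delta)}{\sinh(\Hradius+\delta)}$, and inspecting the $\Hradius$-dependence of the constants one checks that $\delta_0$ may be chosen depending only on $\outve,\eta,\c,p$ once $\Hradius>\Hradius_0$. Setting $\Phi(\delta,\cdot):=\graph f_\delta$ gives the asserted map, and, since $f_0\equiv0$, the lapse function $\rnu=\outg(\partial*_\Hradius\Phi,\nu)\vert_\M$ equals $\dot f_0$, the $\delta$-derivative of $f_\delta$ at $\delta=0$.

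Differentiating $\H(\graph f_\delta)\equiv{-}2\,\frac{\cosh(\Hradius+\delta)}{\sinh(\Hradius+\delta)}$ at $\delta=0$ gives the lapse equation
\[ \jacobiext*\rnu = \frac{\d}{\d\Hradius}\Bigl({-}2\,\frac{\cosh(\Hradius)}{\sinh(\Hradius)}\Bigr) = 2\sinh(\Hradius)^{{-}2}. \]
Using the CMC condition $\H\equiv{-}2\,\frac{\cosh(\Hradius)}{\sinh(\Hradius)}$, hence $\H^2=4+4\sinh(\Hradius)^{{-}2}$, together with $\laplace 1=0$ and $\trtr\zFund\zFund=\trtr\zFundtrf\zFundtrf+\tfrac12\H^2$, one computes for the constant function $1$
\[ \jacobiext*1 = \trtr\zFund\zFund+\outric(\nu,\nu) = 2\sinh(\Hradius)^{{-}2}+\trtr\zFundtrf\zFundtrf+(\outric+2\outg)(\nu,\nu), \]
whence $\jacobiext*(\rnu-1)={-}\trtr\zFundtrf\zFundtrf-(\outric+2\outg)(\nu,\nu)=:{-}\err$. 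I would then split $\rnu-1$ by the canonical partition of Definition~\ref{Canonical_partition}, $\rnu-1=\deform{(\rnu-1)}+\boost{(\rnu-1)}$; since the constant $1$ is a $\laplace$-eigenfunction with eigenvalue $0$, which lies below the boost window, $\boost1=0$, so that $\deform{(\rnu-1)}=\deform\rnu-1$ and $\boost{(\rnu-1)}=\boost\rnu$.

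For the deformation part, the $\Wkp^{2,q}$-estimate for $\deform$ in \refth{Stability} bounds $\Vert\deform{(\rnu-1)}\Vert_{\Wkp^{2,q}(\M)}$ by $C\exp(2\Hradius)$ times the $\Lp^q(\M)$-norm of (the deformation part of) $\err$, up to the commutator of $\jacobiext*$ with $\deform$, which is harmless because the potential $\trtr\zFund\zFund+\outric(\nu,\nu)-2\sinh(\Hradius)^{{-}2}$ is small. Here $\Vert\trtr\zFundtrf\zFundtrf\Vert_{\Lp^q}$ is controlled from the $\Wkp^{1,p}$-bound on $\zFundtrf$ of \refth{Regularity_Surfaces} via the Sobolev and Hölder inequalities on $\M$ — which by \refth{Regularity_Surfaces} is, up to a conformal factor of $\Wkp^{2,p}$-size $\exp((2-\decay)\Hradius)$, the round sphere of radius $\sinh(\Hradius)$, so that $\volume{\M}\sim\exp(2\Hradius)$ — while $\Vert(\outric+2\outg)(\nu,\nu)\Vert_{\Lp^q}$ is controlled from~\ref{Round_spheres_outric}; assembling these yields the first asserted estimate $\Vert\deform\rnu-1\Vert_{\Wkp^{2,q}(\M)}\le C\exp((\tfrac2q-1-\outve)\Hradius)$.

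For the boost part I would use the sharp spectral description of \refth{Stability}: on $\boostLp^2(\M)$ the operator $\jacobiext*$ equals ${-}\tfrac{6\,\masstd^0(\M)}{\sinh(\Hradius)^3}\,\id$ up to operator norm $C\exp({-}(3+\outve)\Hradius)$, so that $\boost{(\rnu-1)}$ equals $\tfrac{\sinh(\Hradius)^3}{6\,\masstd^0(\M)}\boost{\err}$ up to a relative error $\mathcal O(\exp({-}\outve\Hradius))$ (the $\boost$--$\deform$ coupling being of lower order). By the Gau\ss\ equation and Gau\ss--Bonnet, $\err$ differs from $\outsymbol G:=(\outric(\nu,\nu)-\tfrac12\outsc-1)\vert_\M=(\outric+2\outg)(\nu,\nu)-\tfrac12(\outsc+6)$ only by the higher-order terms $\tfrac12(\outsc+6)$ and a multiple of $\trtr\zFundtrf\zFundtrf$, whose boost parts are $o(\exp({-}2\Hradius))$ in $\Lp^2$; and by the definition of the Ricci-mass, $\boost{\outsymbol G}=\sum_\oi\outsymbol G^\oi\eflap_\oi=\tfrac{16\sqrt3\,\pi^{\frac32}}{\volume{\M}}\sum_\oi\masstd^\oi(\M)\,\eflap_\oi$. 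Combining these two displays gives $\boost\rnu=\boost{(\rnu-1)}$ equal to $\sqrt{\tfrac{\volume{\M}}3}\,\tfrac{\masstd^\oi(\M)}{\masstd^0(\M)}\eflap_\oi$ up to an error which, promoted from $\Lp^2$ to $\Wkp^{3,q}$ by elliptic regularity on the three-dimensional space $\boostLp^2(\M)$, obeys the second asserted estimate. Finally, writing $\sqrt{\tfrac{\volume{\M}}3}\,\eflap_\oi=\tfrac{\outx^\oi}{\rad}+\mathcal O(\exp((2-\decay)\Hradius))$ in the conformal chart, one has $\rnu=1+\tfrac1{\masstd^0(\M)}\sum_\oi\masstd^\oi(\M)\tfrac{\outx^\oi}{\rad}+(\text{lower order})$; by the Cauchy--Schwarz inequality this stays uniformly positive when $\vert(\masstd^\oi(\M))_{\oi=1}^3\vert_{\R^3}<\vert\masstd^0(\M)\vert$, \ie $\masstd(\M)$ controlled timelike, and attains both signs when $\vert(\masstd^\oi(\M))_{\oi=1}^3\vert_{\R^3}>\vert\masstd^0(\M)\vert$, \ie $\masstd(\M)$ controlled spacelike. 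The main obstacle is the tight bookkeeping of powers of $\exp(\Hradius)$ in the deformation estimate, together with the uniform-in-$\Hradius$ control of the implicit function theorem; the delicate spectral behaviour of $\jacobiext*$ on $\boostLp^2(\M)$ that drives the leading term of $\boost\rnu$ is already supplied by \refth{Stability}.
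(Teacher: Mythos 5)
Your proposal is correct and follows essentially the same route as the paper: the inverse/implicit function theorem applied to the mean-curvature map with invertibility supplied by Proposition~\ref{Stability}, the lapse equation $\jacobiext\rnu=2\sinh(\Hradius)^{-2}$ converted via $\jacobiext1$ into $\jacobiext(\rnu-1)=-\err$, the canonical $\boost/\deform$ splitting with $\boost1=0$, and Proposition~\ref{Stability} for both halves. The only cosmetic differences are in presentation: for the boost part, the paper estimates the individual Fourier coefficients $\rnu^\oi$ via the integral identity $\int(\rnu-1)\eflap^\oi-\frac{\sinh(\Hradius)^3}{6\masstd^0}\int\jacobiext(\rnu-1)\eflap^\oi$, whereas you phrase it as ``$\jacobiext$ acts as $-\tfrac{6\masstd^0}{\sinh(\Hradius)^3}\id$ on $\boostLp^2(\M)$, so invert''---the same content; and for the sign statement, the paper compares $\Vert\boost\rnu\Vert_{\Lp^2}$ and $\Vert\boost\rnu\Vert_{\Lp^\infty}$ with the exact Euclidean ratio $\tfrac{4\pi}{3}$ and deduces sign change from $\Vert\boost\rnu\Vert_{\Lp^\infty}\lessgtr1$, whereas you use the explicit approximation $\eflap_\oi\approx\tfrac{\outx^\oi}{\rad}$ together with Cauchy--Schwarz, which is equivalent. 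One small over-claim: you assert $\delta_0$ can be chosen depending only on $\outve,\eta,\c,p$; the lemma does not require this (the paper simply invokes continuity of the Hawking mass under $\Wkp^{2,q}$-deformation to keep the hypotheses of Proposition~\ref{Stability} valid nearby), so this extra uniformity, while plausible, is neither needed nor established.
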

\begin{proof}
Without loss of generality $q=p\in\interval2\infty$. We can assume that $\Hradius$ is so large that we can apply \refth{Regularity_Surfaces} and \refth{Stability}. We know that the stability operator 
\[ \jacobiext*:\Wkp^{2,q}(\M)\to\Lp^q(\M):f\mapsto\laplace f + (\trtr\zFund\zFund+\outric*(\nu,\nu))f \]
is the Fr\'echet derivative of the mean curvature map
\[ \textsf H : \Wkp^{2,q}(\M) \to \Lp^q(\M) : f \mapsto \H(\graph f) \]
at $f=0$ (for every $q>2$), where $\H(\graph f)$ denotes the mean curvature of the graph of $f$ which we interpret as function on $\M$.\pagebreak[1] By \refth{Stability}, the stability operator is invertible if $\vert\HmHaw\vert>\c$. Thus, the inverse function theorem implies that $\textsf H$ is bijective from a $\Wkp^{2,q}(\M)$-neigh\-bor\-hood of $0\in\Wkp^{2,q}(\M)$ to a $\Lp^q(\M)$-neighborhood of $\H\in\Lp^q(\M)$. As the Hawking mass depends continuously on $\Wkp^{2,q}$-deformation of $\M$, this proves the first claim.

Per Definition of $\Phi$ and $\M<\delta>$, we know
\[ \jacobiext*\rnu = \partial[\delta]@{\H<\delta>}<\delta=0> \equiv \frac2{\sinh(\Hradius)^2}. \]
Thus, \refth{Regularity_Surfaces} implies
\[ \vert\hspace{.05em}\jacobiext*(\rnu-1) + 2 + \outric*(\nu,\nu)\vert \le C\,\exp({-}(3+\outve)\,\Hradius). \]
Hence using \refth{Stability}, we get
\[ \Vert\deform{(\rnu-1)}\Vert_{\Wkp^{2,p}(\M)} \le C\,\exp({-}(1+\outve-\frac2p)\Hradius) \]
---which is the first inequality in~\eqref{rnu_1_ineq} as $\boost 1\equiv0$. Choosing $\rnu^\oi\in\R$ with $\boost\rnu=\rnu^\oi\,\eflap_\oi$ for the $\Lp^2$-orthogonal eigenfunctions $\eflap_i$ of the Laplace operator having an eigenvalue $\ewlap_\oi\in\interval{\sinh(\Hradius)^{{-}2}}{3\sinh(\Hradius)^{{-}2}}$, we see
\begin{align*}\hspace{3em}&\hspace{-3em}
 \vert \rnu^\oi - \sqrt{\frac{4\pi}3}\frac{\sinh(\Hradius)\masstd^\oi}{\masstd<\Hradius>^0}\vert \\
	\le{}& \vert\int(\rnu-1)\eflap^\oi\d\mug - \frac{\sinh(\Hradius)^3}{6\masstd^0}\int\jacobiext(\rnu-1)\eflap^\oi\d\mug \vert + C\,\exp((1-\outve)\Hradius)
	\le C\,\exp((1-\outve)\Hradius),
\end{align*}
where $\masstd:=\mass(\M)$. This proves~\eqref{rnu_1_ineq} and
\begin{equation*}\labeleq{mass_by_rnu}
	\vert\frac{\left|\masstd(\M)\right|_{\R^{3,1}}^2}{\masstd^0(\M)} - \frac{\sinh(\Hradius)}{8\pi}\int\outsymbol G(\nu,\nu)\,\rnu\d\mug \vert \le C\,\exp({-}\outve\Hradius).
\end{equation*}
In the setting of the Euclidean sphere, we have
\[ \Vert\spheflap^\oi\Vert_{\Lp^2(\euksphere)}^2 = \frac{4\pi}{3} \Vert\spheflap^\oi\Vert_{\Lp^\infty(\euksphere)}^2 \qquad\forall\,\spheflap^\oi\in\Lp^\infty(\sphere^2):\ \sphlaplace\spheflap^\oi={-}2\hspace{.05em}\spheflap^\oi \]
and therefore \refth{Regularity_Surfaces} implies that the same holds with respect to $\g$ up to a lower order error term. Thus, we get
\begin{equation*}\labeleq{rnu_mass_ineq}
 \left|\Vert\boost\rnu\Vert_{\Lp^\infty(\M)}^2 - \vert\masstd<\Hradius>^0\vert^{{-}2}\,\vert(\masstd<\Hradius>^\oi)_{\oi=1}^3\vert_{\R^3}^2\right|\le C\,\exp({-}\outve\Hradius).
\end{equation*}
By again comparing with the Euclidean setting, we see that $\sup(\boost\rnu)^+$ is equal to $\sup(\boost\rnu)^-$ (up to term vanashing with $\Hradius\to\infty$), where $(\,{\cdot}
\,)^+:=\max\{{\,\cdot\,},0\}$ and $(\,{\cdot}\,)^-:={-}\min\{{\,\cdot\,},0\}$ denote the positive and negative part of a function. In particular, $\rnu$ changes sign if $\Vert\boost\rnu\Vert_{\Lp^\infty(\M)}>1+\delta$ and does not change sign if $\Vert\boost\rnu\Vert_{\Lp^\infty(\M)}<1-\delta$, where $\delta>0$ has to be independent of $\Hradius$.
\end{proof}
\begin{remark}[Assuming only one-sided boundedness of the scalar curvature]\labelth{BoundednessScalarCurvature_rnu}
If we only assuming one sided boundedness of $\outsc$ as it is explained in \refth{BoundednessScalarCurvature_spheres}, then \eqref{rnu_1_ineq} and \eqref{mass_by_rnu} have to be weakened to
\begin{equation*}\tag{\ref{rnu_1_ineq}'}
 \Vert\deform{\rnu}-1\Vert_{\Wkp^{2,q}(\M)} \le C\,\exp((\frac2q-1-\outve)\Hradius), \qquad
 \vert\boost\rnu\vert \le \vert\sqrt{\frac{\volume{\M}}3}\frac{\masstd^\oi}{\masstd^0}\eflap_\oi\vert + C\,\exp((\frac2q-\outve)\Hradius),
\end{equation*}
and
\begin{equation*}\edef\oldmassref{\getrefnumber{mass_by_rnu}'}
	\frac{\sinh(\Hradius)}{8\pi}\int\outsymbol G(\nu,\nu)\,\rnu\d\mug \le \frac{\left|\masstd(\M)\right|_{\R^{3,1}}^2}{\masstd^0(\M)} + C\,\exp({-}\outve\Hradius),  \labeleq[\oldmassref]{mass_by_rnu'}
\end{equation*}
respectively, see \refth{BoundednessScalarCurvature_stability}.
\end{remark}

\section{Regularity of \texorpdfstring{\Wkpro^{2,p}_{\decay}}{asymptotically round} covers}\label{Regularity_covers}
\begin{proposition}[The Ricci mass is well-defined]\labelth{Ricci_mass_well-defined}
For all constants $\decay\ge\frac52+\outve\in\interval{\frac52}*3$, $\scdecay\ge3+\outve$, $\eta<4$, $p>2$, and $\c\ge0$, there exist two constants $\Hradius_0=\Cof{\Hradius_0}[\outve][p][\eta][\c]$ and $C=\Cof[\outve][p][\eta][\c]$ with the following property:

Let $\mathcal M=\{\M<\Hradius>\}_{\Hradius>\Hradius_1}$ be a locally unique \Wkpro^{2,p}_{\decay,\scdecay}(\c,\eta) foliation with uniformly timelike Ricci-mass. If $\Hradius_1\ge\Hradius_0$, then $\vert\HmHaw<\Hradius>\vert-C\exp({-}\outve\Hradius)$ is monotone increasing in $\Hradius$ and
\begin{equation*}\labeleq{derivative_Hawking_mass}
 \vert\partial[\Hradius]@{(\HmHaw<\Hradius>)^2} - 2\left|(\masstd<\Hradius>^\oi)_{\oi=1}^3\right|_{\R^3}^2\vert 
		 \le C\,\exp({-}\outve\Hradius).
\end{equation*}
If $\mathcal M$ has furthermore bounded Ricci-mass, then $\mass^0(\mathcal M)$ is well-defined and non-vanishing and the function $\Hradius\to\vert(\masstd<\Hradius>^i)_{i=1}^3\vert_{\R^3}^2$ is integrable. In particular, the total part $\vert\masstd(\M<\Hradius>)\vert_{\R^{3,1}}$ of the Ricci-mass of the leaves converge if and only if the Ricci-mass of the leaves converge, \ie if $\mass(\mathcal M)$ is well-defined, and in this setting $\mass(\mathcal M)=(\mass^0(\mathcal M),0,0,0)$.
\end{proposition}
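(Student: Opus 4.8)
\begin{Proof}[Proof idea]
The heart of the argument is a first-variation formula for the hyperbolic Hawking mass along the foliation, so I would first set the foliation up analytically. Uniform timelikeness of the Ricci-mass together with $\vert\HmHaw<\Hradius>-\masstd<\Hradius>^0\vert\le C\exp({-}\outve\Hradius)$ --- which follows from the Gauss equation, Gauss--Bonnet and \refth{Regularity_Surfaces} (see the remark after it, using $\decay\ge\frac52+\outve$) --- forces $\vert\HmHaw<\Hradius>\vert$ to stay bounded below by a positive constant for large $\Hradius$, so \refth{Local_Estimates_rnu} applies at every leaf $\M<\Hradius>$: it produces a $\Ck^1$-deformation whose slices have the constant mean curvatures $-2\frac{\cosh(\Hradius+\delta)}{\sinh(\Hradius+\delta)}$, and by the local uniqueness~\ref{Round_foliation_unique} these slices lie in $\mathcal M$. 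Hence, for $\Hradius_1$ large, these deformations patch to a $\Ck^1$-parametrisation of $\bigcup_{\Hradius>\Hradius_1}\M<\Hradius>$ by the mean-curvature radius, the leaves are pairwise disjoint, and the associated lapse $\rnu$ (one on each $\M<\Hradius>$) obeys \eqref{rnu_1_ineq}, \eqref{mass_by_rnu} and \eqref{rnu_mass_ineq} with uniform constants. Under the additional hypothesis of bounded Ricci-mass I would moreover use \refth{RoundSpheresBoundedArea} and Lemma~\ref{BoundednessHawkingmassAreaBoundedness} to get $\vert\Aradius-\Hradius\vert\le C\exp({-}\Hradius)$, where $\volume{\M<\Hradius>}=4\pi\sinh(\Aradius)^2$.

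Because $\H<\Hradius>$ is constant on $\M<\Hradius>$, the Hawking mass is an explicit function of area and radius,
\begin{equation*}
 \HmHaw<\Hradius>=\Big(\tfrac{\volume{\M<\Hradius>}}{16\pi}\Big)^{\!\frac12}\Big(1-\tfrac{\volume{\M<\Hradius>}}{4\pi\sinh(\Hradius)^2}\Big),
\end{equation*}
so $\partial[\Hradius]@{(\HmHaw<\Hradius>)^2}$ can be read off from the first variation of area, $\partial[\Hradius]@{\volume{\M<\Hradius>}}={-}\int_{\M<\Hradius>}\H<\Hradius>\,\rnu\d\mug=2\tfrac{\cosh(\Hradius)}{\sinh(\Hradius)}\int_{\M<\Hradius>}\rnu\d\mug$, and the elementary derivative of $\sinh(\Hradius)^{-2}$. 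Writing $\overline{\rnu}:=\fint_{\M<\Hradius>}\rnu\d\mug$ and using $\HmHaw<\Hradius>=\masstd<\Hradius>^0+\mathcal O(\exp({-}\outve\Hradius))$ together with $\vert\Aradius-\Hradius\vert\le C\exp({-}\Hradius)$, one obtains an identity of the form
\begin{equation*}
 \partial[\Hradius]@{(\HmHaw<\Hradius>)^2}=2(\HmHaw<\Hradius>)^2-2\tfrac{\cosh(\Hradius)}{\sinh(\Hradius)}\,\HmHaw<\Hradius>\,\sinh(\Aradius)\,(\overline{\rnu}-1)+\mathcal O(\exp({-}\outve\Hradius)).
\end{equation*}

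Everything now hinges on expanding $\overline{\rnu}-1$ one order beyond the bound of \eqref{rnu_1_ineq}. I would integrate the lapse equation $\jacobiext*\rnu\equiv2\sinh(\Hradius)^{-2}$ over $\M<\Hradius>$; using $\jacobiext*=\laplace+\trtr\zFund\zFund+\outric(\nu,\nu)$, $\trtr\zFund\zFund=2\tfrac{\cosh(\Hradius)^2}{\sinh(\Hradius)^2}+\trtr\zFundtrf\zFundtrf$ and $\outric(\nu,\nu)=\outsymbol G+\tfrac12(\outsc+6)-2$ with $\outsymbol G=\outric(\nu,\nu)-\tfrac12\outsc-1$ the scalar from the definition of the Ricci-mass, the zeroth-order terms cancel against the right-hand side and only
\begin{equation*}
 \tfrac{2\,\volume{\M<\Hradius>}}{\sinh(\Hradius)^2}\,(\overline{\rnu}-1)={-}\int_{\M<\Hradius>}\big(\trtr\zFundtrf\zFundtrf+\outsymbol G+\tfrac12(\outsc+6)\big)\,\rnu\d\mug
\end{equation*}
remains. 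Now \eqref{mass_by_rnu} turns $\int\outsymbol G\,\rnu\d\mug$ into $\tfrac{8\pi}{\sinh(\Hradius)}\tfrac{\vert\masstd<\Hradius>\vert_{\R^{3,1}}^2}{\masstd<\Hradius>^0}+\mathcal O(\sinh(\Hradius)^{-1}\exp({-}\outve\Hradius))$, while \refth{Regularity_Surfaces} bounds $\int_{\M<\Hradius>}\trtr\zFundtrf\zFundtrf\d\mug=\mathcal O(\exp(2(2-\decay)\Hradius))$ and~\ref{Round_spheres_outric} bounds $\int_{\M<\Hradius>}\vert\outsc+6\vert\d\mug=\mathcal O(\exp((2-\scdecay)\Hradius))$; by $\decay\ge\frac52+\outve$ and $\scdecay\ge3+\outve$ the latter two are $\mathcal O(\exp({-}(1+\outve)\Hradius))$, i.e.\ a factor $\exp({-}\outve\Hradius)$ below the main term. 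Using $\vert\masstd<\Hradius>\vert_{\R^{3,1}}^2={-}(\masstd<\Hradius>^0)^2+\vert(\masstd<\Hradius>^\oi)_{\oi=1}^3\vert_{\R^3}^2$, this yields $\sinh(\Aradius)(\overline{\rnu}-1)=\masstd<\Hradius>^0-\vert(\masstd<\Hradius>^\oi)_{\oi=1}^3\vert_{\R^3}^2/\masstd<\Hradius>^0+\mathcal O(\exp({-}\outve\Hradius))$. Substituting this and $\tfrac{\cosh(\Hradius)}{\sinh(\Hradius)}\HmHaw<\Hradius>=\masstd<\Hradius>^0+\mathcal O(\exp({-}\outve\Hradius))$ into the variation formula, the two summands proportional to $(\masstd<\Hradius>^0)^2$ cancel and only $2\vert(\masstd<\Hradius>^\oi)_{\oi=1}^3\vert_{\R^3}^2$ survives up to $\mathcal O(\exp({-}\outve\Hradius))$ --- which is \eqref{derivative_Hawking_mass}. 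Monotonicity of $\vert\HmHaw<\Hradius>\vert-C\exp({-}\outve\Hradius)$ follows at once, since $\partial[\Hradius]@{(\HmHaw<\Hradius>)^2}\ge{-}C\exp({-}\outve\Hradius)$ and $\vert\HmHaw<\Hradius>\vert$ bounded below with a fixed sign (by continuity in $\Hradius$ and non-vanishing of $\masstd<\Hradius>^0$) give $\partial[\Hradius]@{\vert\HmHaw<\Hradius>\vert}\ge{-}C'\exp({-}\outve\Hradius)$.

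Finally, if $\mathcal M$ has bounded Ricci-mass then $\HmHaw<\Hradius>$ is two-sidedly bounded, hence non-decreasing up to $\mathcal O(\exp({-}\outve\Hradius))$ and therefore convergent; since $\masstd<\Hradius>^0=\HmHaw<\Hradius>+\mathcal O(\exp({-}\outve\Hradius))$ keeps a fixed sign, $\mass^0(\mathcal M):=\lim_\Hradius\masstd<\Hradius>^0$ exists and is non-zero. Integrating \eqref{derivative_Hawking_mass} over $\interval{\Hradius_1}\infty$ and using that $(\HmHaw<\Hradius>)^2$ is bounded and $\exp({-}\outve\Hradius)$ integrable shows $\Hradius\mapsto\vert(\masstd<\Hradius>^\oi)_{\oi=1}^3\vert_{\R^3}^2$ is integrable. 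As $(\masstd<\Hradius>^0)^2$ converges, $\vert\masstd<\Hradius>\vert_{\R^{3,1}}$ converges if and only if $\vert(\masstd<\Hradius>^\oi)_{\oi=1}^3\vert_{\R^3}^2$ does; but an integrable non-negative function on a half-line that has a limit must tend to $0$, so in that case $\masstd<\Hradius>^\oi\to0$ for every $\oi$, whence $\masstd(\M<\Hradius>)\to(\mass^0(\mathcal M),0,0,0)$, so $\mass(\mathcal M)$ is well-defined and equals $(\mass^0(\mathcal M),0,0,0)$; the reverse implication is trivial. The delicate step throughout is the second-order expansion of $\overline{\rnu}-1$: every error term has to be pushed a further factor $\exp({-}\outve\Hradius)$ below the genuine leading term of size $\exp({-}\Hradius)$, which is exactly where the sharp lapse estimates of \refth{Local_Estimates_rnu}, the decay thresholds $\decay\ge\frac52+\outve$ and $\scdecay\ge3+\outve$, and the boundedness of $\HmHaw<\Hradius>$ (so that the constant in \eqref{derivative_Hawking_mass} does not secretly depend on $\sup_\Hradius\vert\masstd<\Hradius>^0\vert$) all come in.
\end{Proof}
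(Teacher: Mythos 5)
Your derivation of the key identity \eqref{derivative_Hawking_mass} follows the same route as the paper: establish a lower bound on $\vert\HmHaw<\Hradius>\vert$ from uniform timelikeness so that \refth{Local_Estimates_rnu} applies, patch the local deformations into a global parametrisation using local uniqueness, and combine the first variation of area with the lapse equation and \eqref{mass_by_rnu}. The bookkeeping differs slightly --- you expand the mean value $\overline{\rnu}-1$ directly whereas the paper tracks the $\Hradius$-derivative of $\volume{\M<\Hradius>}/\sinh(\Hradius)^2$ --- but this is cosmetic and both arrive at the same error structure.

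The genuine gap is in the final paragraph. You assert that boundedness of the Ricci-mass forces $\HmHaw<\Hradius>$ to be two-sidedly bounded, and then deduce convergence from monotonicity. But bounded Ricci-mass only bounds the Lorentzian norm $\vert\masstd<\Hradius>\vert_{\R^{3,1}}^2={-}(\masstd<\Hradius>^0)^2+\vert(\masstd<\Hradius>^\oi)_{\oi=1}^3\vert_{\R^3}^2$; a priori $\masstd<\Hradius>^0$ (and hence $\HmHaw<\Hradius>$) and the spatial part could both grow without bound while their difference of squares stays bounded. This is exactly the point where the paper needs an extra argument: supposing $\vert\HmHaw<\Hradius>\vert$ unbounded (hence, by your monotonicity, tending to infinity), the two-sided Lorentzian bound yields $\vert(\masstd<\Hradius>^\oi)_{\oi=1}^3\vert_{\R^3}\ge\frac2{\sqrt5}\vert\masstd<\Hradius>^0\vert$ for large $\Hradius$; plugging this into \eqref{derivative_Hawking_mass} produces the self-reinforcing differential inequality which forces $\vert\HmHaw<\Hradius>\vert$ to grow at least like $\exp(\frac23\Hradius)$, contradicting the a priori bound $\vert\HmHaw<\Hradius>\vert\le\exp(\frac12\Hradius)$ that follows from \refth{RoundSpheresBoundedArea} and \refth{Regularity_Surfaces}. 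Without this contradiction argument the boundedness of $\HmHaw<\Hradius>$ is not established. For the same reason your earlier appeal to Lemma~\ref{BoundednessHawkingmassAreaBoundedness} (to get $\vert\Aradius-\Hradius\vert\le C\exp({-}\Hradius)$) is circular, since that lemma presupposes~\ref{Round_sphere_bounded_mass}, i.e.\ exactly the two-sided bound on $\vert\HmHaw\vert$ that is in question; the weaker area control coming from \refth{RoundSpheresBoundedArea} and \refth{Regularity_Surfaces} alone, which does not rely on it, already suffices for the error estimates you use.
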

\begin{proof}
Per assumption, we know
\[ \vert\HmHaw(\M<\Hradius>)\vert
	\ge \vert\masstd^0(\M<\Hradius>)\vert - C\exp({-}\outve\Hradius)
	\ge {-}\vert\masstd(\M<\Hradius>)\vert_{\R^{3,1}} - C\exp({-}\outve\Hradius)
	\ge \frac1{2\oc}\qquad\forall\,\Hradius>\Hradius_1 \]
if $\Hradius_1$ is sufficiently large. Therefore, we can without loss of generality assume that $\Hradius_1$ is so large that we can apply all the results so far on each leaf $\M<\Hradius>$.

Fix any $\Hradius'>\Hradius_1$ and suppress the corresponding index $\Hradius'$. By \refth{Local_Estimates_rnu}, we know that there exists a map $\Phi:\interval{\Hradius'-\delta}{\Hradius'+\delta}\times\sphere^2$ such that $\Phi(\Hradius,\sphere^2)$ has constant mean curvature $\H<\Hradius>\equiv{-}2\frac{\cosh(\Hradius)}{\sinh(\Hradius)}$, $\M=\Phi(\Hradius,\sphere^2)$, and that the lapse function satisfies~\eqref{rnu_1_ineq}. By the uniqueness assumption this implies $\M<\Hradius>=\Phi(\Hradius,\sphere^2)$ for every $\Hradius\in\interval{\Hradius'-\delta}{\Hradius'+\delta}$. As $\Hradius$ was arbitrary, we can repeat this step and---by applying diffeomorphisms to $\sphere^2$---we can glue those $\Phi$ to one smooth map $\Phi:\interval{\Hradius_1}\infty\times\sphere^2\to\outM$ with $\M<\Hradius>=\Phi(\Hradius,\sphere^2)$.

Now, we note that
\begin{equation*}\labeleq{conv_mass__1}
 \partial[\Hradius]@{\volume{\M<\Hradius>}} = {-}\int\H\rnu\d\mug, \qquad
 \jacobiext\rnu=\partial[\Hradius]@{\H<\Hradius>} \equiv 2\sinh(\Hradius)^{{-}2}, \end{equation*}
where $\rnu:=\outg(\partial*_\Hradius\Phi,\nu)$ again denotes the lapse function. In particular, \eqref{mass_by_rnu} implies
\begin{align*}
 \vert\partial[\Hradius]\frac{\volume{\M<\Hradius>}}{\sinh(\Hradius)^2} + \frac{8\pi|\masstd|_{\R^{3,1}}^2}{\sinh(\Hradius)\masstd^0} \vert
 \le{}& \vert\int\frac{{-}\H\rnu}{\sinh(\Hradius)^2} + \outsymbol G(\nu,\nu)\,\rnu\d\mug - \frac{2\volume{\M}}{\sinh(\Hradius)^2} \vert + C\,\exp({-}(1+\outve)) \\
 \le{}& \vert\int(\frac{\H^2}2+\outric(\nu,\nu))\rnu - \jacobiext\rnu\d\mug \vert + C\,\exp({-}(1+\outve)\Hradius).
\end{align*}
As $\int\laplace\rnu\d\mug=0$ and $\vert\zFundtrf\vert\le C\,\exp(2(1-\decay))$, this gives us
\begin{equation*}\labeleq{conv_mass__2}
	\vert\partial[\Hradius]\frac{\volume{\M<\Hradius>}}{\sinh(\Hradius)^2} + \frac{8\pi|\masstd|_{\R^{3,1}}^2}{\sinh(\Hradius)\masstd^0} \vert \le C\,\exp({-}(1+\outve)).
\end{equation*}
By the definition of the Hawking mass $\HmHaw<\Hradius>:=\HmHaw(\M<\Hradius>)$, we get
\[ \vert \left.\frac{\partial*^k}{\partial*\Hradius^k}\right|_{\Hradius=\Hradius'}(\frac{\volume{\M<\Hradius>}}{4\pi\sinh(\Hradius)^2} - (1 - 2\frac{\HmHaw<\Hradius>}{\sinh(\Hradius)}))\vert \le C\,\exp((1-\outve)\Hradius) \qquad\forall\,k\in\{0,1\}. \]
Thus, we have
\[ \vert\partial[\Hradius]@{\HmHaw<\Hradius>}
		 - (\HmHaw<\Hradius> - \sinh(\Hradius)\partial[\Hradius]\frac{\volume{\M<\Hradius>}}{8\pi\sinh(\Hradius)^2})\vert \le C\,\exp({-}\outve\Hradius). \]
and \eqref{conv_mass__2} gives
\[ \vert\partial[\Hradius]@{\HmHaw<\Hradius>} - (\masstd<\Hradius>^0)^{{-}1}\,\left|(\masstd<\Hradius>^\oi)_{\oi=1}^3\right|_{\R^{3}}^2\vert 
		= \vert\partial[\Hradius]@{\HmHaw<\Hradius>} - (\masstd<\Hradius>^0+\frac{|\masstd<\Hradius>|_{\R^{3,1}}^2}{\masstd<\Hradius>^0})\vert  \le C\,\exp({-}\outve\Hradius). \]
This implies that $\Hradius\to\vert\HmHaw<\Hradius>\vert-C\,\exp({-}\outve\Hradius)$ is monotone increasing and the combination with $\vert\HmHaw<\Hradius>-\masstd<\Hradius>^0\vert\le C\,\exp({-}\outve\Hradius)$ proves~\eqref{derivative_Hawking_mass}.

Now, let us assume that $\mathcal M$ has bounded Ricci-mass, but $\vert\HmHaw<\Hradius>\vert$ (and therefore $\vert\masstd<\Hradius>^0\vert$) is unbounded. By the above monotonicity, $\vert\HmHaw<\Hradius>\vert$ has to converge to infinity. By the boundedness of the Ricci-mass, we therefore know
\[ \frac2{\sqrt5}\vert\masstd<\Hradius>^0\vert
		= \sqrt{\frac45\vert\masstd<\Hradius>^0\vert^2}
		\le \sqrt{\vert\masstd<\Hradius>^0\vert^2 - C}
		\le \sqrt{\vert\masstd<\Hradius>^0\vert^2 + \vert\masstd<\Hradius>\vert_{\R^{3,1}}^2}
		\le \vert(\masstd<\Hradius>^\oi)_\oi\vert_{\R^3} \]
for sufficiently large $\Hradius$. Thus, \eqref{derivative_Hawking_mass} implies
\[ \partial[\Hradius]@{\:\vert\HmHaw<\Hradius>\vert}
		\ge \frac{\vert(\masstd<\Hradius>^\oi)_\oi\vert_{\R^3}^2}{\vert\masstd<\Hradius>^0\vert} - C
		\ge \frac45\vert\masstd<\Hradius>^0\vert-C
		\ge \frac34\vert\HmHaw<\Hradius>\vert. \]
This proves $\vert\HmHaw<\Hradius>\vert\ge \exp(\frac23\Hradius)$ for sufficiently large $\Hradius$ which contradicts $\vert\HmHaw<\Hradius>\vert\le \exp(\frac12\Hradius)$ being an implication of $\vert\g<\Hradius>-\sinh(\Hradius)^2\sphg\vert\le C\,\exp((\frac32-\outve)\Hradius)$, see \refth{RoundSpheresBoundedArea}. Thus, $\HmHaw<\Hradius>$ is bounded and by the above monotonicity it therefore converges. Thus, $\mass^0:=\lim_\Hradius\masstd<\Hradius>^0$ is well-defined, non-vanishing, and finite.
\end{proof}
\begin{remark}[Assuming only one-sided boundedness of the scalar curvature]\labelth{BoundednessScalarCurvature_mass}
If we only assuming one sided boundedness of $\outsc$ as it is explained in \refth{BoundednessScalarCurvature_spheres}, then we can not apply \eqref{mass_by_rnu}, but only \eqref{mass_by_rnu'} and therefore \eqref{derivative_Hawking_mass} has to be weakened to
\begin{equation*}\tag{\ref{derivative_Hawking_mass}'}
 \partial[\Hradius]@{(\HmHaw<\Hradius>)^2} \ge 2\left|(\masstd<\Hradius>^\oi)_{\oi=1}^3\right|_{\R^3}^2 - C\,\exp({-}\outve\Hradius).
\end{equation*}
However, the rest of the claims remain true.
\end{remark}

The next step is to conclude a even better decay estimate on the second fundamental form as we established in \refth{Regularity_Surfaces}.
\begin{lemma}[Better decay rates of the second fundamental form]\labelth{second_fundamental_form}
For all constants $\decay\ge\frac52+\outve\in\interval{\frac52}*3$, $\scdecay\ge3+\outve$, $\eta<4$, $p>2$, $\c\ge0$, and $q\in\interval2*p$ with $q<\infty$, there exist two constants $\Hradius_0=\Cof{\Hradius_0}[\outve][p][\eta][\c]$ and $C=\Cof[\outve][p][\eta][\c][q]$ with the following property:

Let $\mathcal M=\{\M<\Hradius>\}_{\Hradius>\Hradius_1}$ be a locally unique \Wkpro^{2,p}_{\decay,\scdecay}(\c,\eta) foliation with uniformly timelike Ricci-mass, where $\Hradius_1\ge\Hradius_0$. Then
\begin{equation*}\labeleq{Stronger_estimates_on_zFundtrf}
 \Vert\zFundtrf\Vert_{\Lp^q(\M<\Hradius>)}
		+ \Vert\levi\zFundtrf\Vert_{\Lp^q(\M<\Hradius>)}
		\le C\,\exp((\frac2q-\decay)\Hradius).
\end{equation*}
\end{lemma}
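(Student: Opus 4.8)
The strategy is to work along the whole foliation rather than on a single leaf. As in the proof of \refth{Ricci_mass_well-defined}, the local uniqueness together with \refth{Local_Estimates_rnu} first lets one glue the leaves into one smooth normal foliation $\Phi\colon\interval{\Hradius_1}\infty\times\sphere\to\outM$, $\M<\Hradius>=\Phi(\Hradius,\sphere)$, whose lapse function $\rnu=\outg(\partial*_\Hradius\Phi,\nu)$ obeys the estimates of \refth{Local_Estimates_rnu} and satisfies $\jacobiext\rnu\equiv2\sinh(\Hradius)^{{-}2}$; in particular \refth{Stability} gives $\Vert\Hesstrf\rnu\Vert_{\Lp^q(\M<\Hradius>)}\le C\exp((\frac2q+1-\decay)\Hradius)$ (using $\decay=\frac52+\outve$ and $\volume{\M<\Hradius>}\sim\exp(2\Hradius)$ from \refth{RoundSpheresBoundedArea}). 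Along the foliation $\partial[\Hradius]@{\g<\Hradius>}={-}2\rnu\,\zFund$, whose trace-free part is ${-}2\rnu\,\zFundtrf$; feeding this and $\partial[\Hradius]@{\H<\Hradius>}=\jacobiext\rnu$ into the standard evolution equation of the second fundamental form under a normal deformation yields an equation of the form
\[ \partial[\Hradius]@{\zFundtrf} \;=\; \Hesstrf\rnu \;+\; (\rnu\,\H<\Hradius>)\,\zFundtrf \;+\; \mathcal E_{\Hradius}, \]
where $\rnu\,\H<\Hradius>={-}2+\mathcal O(\exp({-}\outve\Hradius))$ has a favourable sign and where the remainder $\mathcal E_{\Hradius}$ collects the trace-free parts of the terms quadratic in $\zFund$ and of the ambient Riemann tensor and obeys $\Vert\mathcal E_{\Hradius}\Vert_{\Lp^q(\M<\Hradius>)}\le C\exp((\frac2q+1-\decay)\Hradius)$ by \refth{Regularity_Surfaces} and \ref{Round_spheres_outric}. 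Independently, the Codazzi equation on the CMC-leaf reads $\div\zFundtrf=\outric(\nu,\emptyarg)^{\!\top}$, whose $\Lp^q$-norm is at most $\Vert\,\lvert\outric+2\outg\rvert_{\outg}\Vert_{\Lp^q(\M<\Hradius>)}\le C\exp((\frac2q-\decay)\Hradius)$ by \ref{Round_spheres_outric}.

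Naively integrating the evolution equation, or inverting Codazzi by the elliptic estimate for $\div$ on trace-free symmetric $2$-tensors over the almost-round $\M<\Hradius>$, only reproduces the bound $\Vert\zFundtrf\Vert_{\Lp^q}\le C\exp((\frac2q+1-\decay)\Hradius)$ of \refth{Regularity_Surfaces}: on a near-round sphere of area radius $\sinh(\Hradius)$, the operator $\div$ restricted to trace-free symmetric $2$-tensors has smallest singular value only of order $\sinh(\Hradius)^{{-}1}$, attained on the lowest ($\lambda\approx2\sinh(\Hradius)^{{-}2}$) modes, so inverting it costs a factor $\sinh(\Hradius)$. The lemma amounts to recovering that factor. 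One splits $\zFundtrf$ into its part in the span of the lowest divergence-eigentensors and the rest; on the rest the elliptic estimate for $\div$ has no loss of $\sinh(\Hradius)$, so Codazzi gives the bound $C\exp((\frac2q-\decay)\Hradius)$ there directly. For the lowest part one returns to the evolution equation and uses that $\Hesstrf\rnu$ is, modulo an $\Lp^q$-error of the desired size $\exp((\frac2q-\decay)\Hradius)$, a total $\partial*_\Hradius$-derivative: writing $\g<\Hradius>=\exp(2\conformalf)\sinh(\Hradius)^2\sphg$ in the conformal gauge of \refth{Regularity_Surfaces}, one has $\partial[\Hradius]@{\conformalf}=(\rnu-1)\coth(\Hradius)$ up to a gauge term, hence $\Hesstrf\rnu=\partial*_\Hradius(\Hesstrf\conformalf)+\mathcal O_{\Lp^q}(\exp((\frac2q-\decay)\Hradius))$, while \refth{Regularity_Surfaces} bounds $\Vert\Hesstrf\conformalf\Vert_{\Lp^q(\M<\Hradius>)}$ by $C\exp((\frac2q-\decay)\Hradius)$ (the Hessian gaining $\sinh(\Hradius)^{{-}2}$ against the area factor). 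Thus $\zFundtrf-\Hesstrf\conformalf$ satisfies $\partial[\Hradius]@{\big(\zFundtrf-\Hesstrf\conformalf\big)}+2\big(\zFundtrf-\Hesstrf\conformalf\big)=\mathcal O_{\Lp^q}(\exp((\frac2q-\decay)\Hradius))$, and integrating this in $\Hradius$—the sign ${+}2$ together with the $\Wkp^{1,p}$-decay of $\zFundtrf$ from \refth{Regularity_Surfaces} ruling out a homogeneous contribution exceeding the right-hand side—gives $\Vert\zFundtrf-\Hesstrf\conformalf\Vert_{\Lp^q}\le C\exp((\frac2q-\decay)\Hradius)$, hence $\Vert\zFundtrf\Vert_{\Lp^q(\M<\Hradius>)}\le C\exp((\frac2q-\decay)\Hradius)$. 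Differentiating the Codazzi and evolution equations once more and inserting the $\Wkp^{1,p}$-bound of \refth{Regularity_Surfaces} gives the analogous estimate for $\levi\zFundtrf$; interpolation reduces throughout to $q$ arbitrarily close to $p$.

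The crux, and the main obstacle, is precisely the lowest-frequency part. Every argument confined to a single leaf—Codazzi plus elliptic estimates, or a direct integration of the evolution equation—loses the factor $\sinh(\Hradius)$ on the modes with $\lambda\approx2\sinh(\Hradius)^{{-}2}$, so one genuinely needs both the foliation and the cancellation $\Hesstrf\rnu\approx\partial*_\Hradius(\Hesstrf\conformalf)$, equivalently the precise spectral description of $\jacobiext$ from \refth{Stability} together with $\jacobiext\rnu\equiv2\sinh(\Hradius)^{{-}2}$ and the refined lapse estimates of \refth{Local_Estimates_rnu}. The remaining technical effort lies in keeping the $\Lp^q(\M<\Hradius>)$-norms on different leaves comparable when integrating in $\Hradius$, which is handled via the conformal parametrisations of \refth{Regularity_Surfaces}.
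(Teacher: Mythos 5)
Your high-level strategy — control $\zFundtrf$ by evolving it along the foliation rather than on a single leaf — matches the paper's, but your execution diverges and a concrete error sends you on an unnecessary detour. The step you miss is an algebraic cancellation. Substituting $\partial_\Hradius\g={-}2\rnu\zFund$ and $\partial_\Hradius\H=\jacobiext\rnu$ into $\partial_\Hradius\zFundtrf=\partial_\Hradius\zFund-\frac12(\partial_\Hradius\H)\g-\frac12\H\partial_\Hradius\g$ and using the Gau{\ss} equation, the two $\rnu\H\zFundtrf$-contributions cancel identically, and in dimension two $\trzd\zFundtrf\zFundtrf=\frac12\vert\zFundtrf\vert^2\g$ is pure trace; one arrives at
\begin{equation*}
\partial[\Hradius]@{\zFundtrf}=\rnu\big(\outric+\tfrac12\outric(\nu,\nu)\g-\tfrac12\outsc\g-2\,\trzd\zFundtrf\zFundtrf\big)+\Hesstrf\rnu,
\end{equation*}
so that traced against $\zFundtrf$ every term proportional to $\g$ drops out and the source becomes $\trtr\zFundtrf{(\rnu(\outric+2\outg)+\Hesstrf\rnu)}$, of $\Lp^q$-size $C\exp((\frac2q-\decay)\Hradius)$. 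Your evolution equation with an extra $(\rnu\H)\zFundtrf$ term, and your estimate $\Vert\mathcal E_\Hradius\Vert_{\Lp^q}\le C\exp((\frac2q+1-\decay)\Hradius)$, are therefore both off — the latter by precisely the factor $\exp(\Hradius)$ that this cancellation removes. Once the source decays at rate $\exp((\frac2q-\decay)\Hradius)$, the \lq\lq naive\rq\rq\ integration you dismiss \emph{is} the paper's proof: $\partial_\Hradius\Vert\zFundtrf\Vert_{\Lp^q}^q$ has, beyond the source, the dissipation $(\frac q2-1)\int\H\rnu\vert\zFundtrf\vert^q\d\mug\le0$ (using $p>2$, $\rnu\ge0$, $\H<0$), and the resulting first-order differential inequality for $\Vert\zFundtrf\Vert_{\Lp^q}$ integrates directly to $\Vert\zFundtrf\Vert_{\Lp^q}\le C\exp((\frac2q-\decay)\Hradius)$.

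Consequently the Codazzi equation, the spectral splitting of $\div$ on trace-free tensors, and the integration-ruling-out-a-homogeneous-contribution step are not in the paper and are not needed; they are also not carried out rigorously in your sketch. What you do correctly identify, in parallel with the paper, is that $\Hesstrf\rnu$ must be controlled better than a generic $\Wkp^{2,q}$-bound on $\rnu$ allows. The paper handles this with \refth{Stability} and by comparing $\boost\rnu$ with its Euclidean analogue, whose trace-free Hessian vanishes identically (first spherical harmonics), yielding $\Vert\Hesstrf\boost\rnu\Vert_{\Lp^q}\le C\exp((\frac2q-\decay)\Hradius)$. Your observation $\Hesstrf\rnu\approx\partial_\Hradius(\Hesstrf\conformalf)$ is a plausible alternative way of attacking that same term, but it is asserted rather than proved, and in any case it is the only genuinely novel ingredient in your proposal — the frequency decomposition and Codazzi machinery built around it become superfluous once the evolution equation is computed correctly.
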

\begin{remark}
On the first glimpse, it seems to be an unnatural estimate as it implies that the trace free part of the second fundamental form and its ($\M<\Hradius>$-tangential) derivative decay with the same decay rate. However, the author explained in \cite[Rem.~3.7]{nerz2016HBCMCExistence} why this is actually a natural property in the hyperbolic space. Furthermore, the author proved the equivalent estimate for CMC-leaves in an a~priori asymptotically hyperbolic manifold in \cite[Thm~3.1]{nerz2016HBCMCExistence} using a coordinate depending ansatz.
\end{remark}
\begin{proof}
Without loss of generality, $\decay=\frac52+\outve$, $\scdecay=3+\outve$, $\dot\bigcup_{\Hradius>\Hradius_0}\M<\Hradius>=\outM$, $p=q<\infty$, and $\Hradius_0$ is so large, that we can apply \refth{Regularity_Surfaces} and \refth{Stability} as well as \refth{Local_Estimates_rnu} on every $\M<\Hradius>$ and \refth{Ricci_mass_well-defined} on $\mathcal M$. By \refth{Local_Estimates_rnu} and \refth{Ricci_mass_well-defined} there exists a smooth map $\Phi:\interval{\Hradius_0}\infty\times\sphere\to\outM$ with $\Phi(\Hradius,\sphere^2)=\M<\Hradius>$ for every $\Hradius$ and for each such diffeomorphism the lapse function $\rnu<\Hradius>$ is everywhere positive.\pagebreak[1]

A direct calculation proves
\begin{align*}\labeleq{derivative-zFundtrf}
 \partial[\Hradius]@{\zFundtrf}
  ={}& \partial[\Hradius]@{\zFund} - \frac12(\partial[\Hradius]@\H)\g - \frac12\H\partial[\Hradius]@\g \\
	={}& \rnu(\outric-\frac12\sc\g-2\trzd\zFundtrf\zFund) + \Hess\,\rnu \\
		 & - \frac12(\laplace\rnu+(\trtr\zFund\zFund+\outric(\nu,\nu))\rnu)\g + \rnu\H\zFund \\
	={}& \rnu(\outric+\frac12\outric(\nu,\nu)\g-\frac12\outsc\g
		  - 2\trzd\zFundtrf\zFundtrf)x
			+ \Hesstrf\,\rnu,
\end{align*}
where \pagebreak[1]%
we identified tensors with their pullback along $\Phi$ and used the Gau\ss\ equation. In particular, we have
\begin{align*}
 \frac1q\partial[\Hradius]@{\vert\zFundtrf\vert^q}
   ={}& \frac 12\vert\zFundtrf\vert^{q-2}\partial[\Hradius]@{\vert\zFundtrf\vert^2} \\
	={}& \vert\zFundtrf\vert^{q-2}(\vphantom{\Big|}\rnu\,\trtr\zFundtrf{\lieD{\rnu\nu}\zFundtrf} + \rnu\,\trtr{\trzd\zFund\zFundtrf}\zFundtrf) \\
	={}& \vert\zFundtrf\vert^{q-2}\trtr*{\vphantom{\bigg|}\zFundtrf}{\Big(\rnu(\outric+2\outg)+\Hesstrf\,\rnu\Big)} + \frac12\H\rnu\vert\zFundtrf\vert^q
\end{align*}
for every $q\ge2$, where we have used $\trtr\zFundtrf\outg=\tr\zFundtrf\equiv0$ and $\trtr{\trzd\zFundtrf\zFundtrf}\zFundtrf\equiv0$. \pagebreak[1]%
The latter is true as the dimension is two.\footnote{More precisely, $\zFundtrf$ is only non-vanishing on $\textrm T\M<\Hradius>$ and this space is two-dimensional. And in two dimensions every odd power of a tracefree operator is tracefree.}\pagebreak[1] By comparing $\deform{(\,{\cdot}\,)}$ with its analog on $\euksphere^2$, we know $\Vert\Hesstrf\,\boost\rnu\Vert_{\Lp^p(\M)}\le C\,\exp({-}(\frac52-\frac2p+\outve)\Hradius)$, see \refth{Stability}. Therefore the H\"older inequality and $\partial[\Hradius]\d\mug={-}\H\d\mug$ imply
\begin{align*}
 \vert\partial[\Hradius]\Vert\zFundtrf\Vert_{\Lp^p(\M<\Hradius>)}^p\vert 
	\le{}& 2\int(\vert\rnu\vert\,\vert\outric+2\outg\vert+\vert\Hesstrf\,\rnu\vert)\vert\zFundtrf\vert^{p-1}\d\mug
					+ (\frac p2-1)\int\H\rnu\vert\zFundtrf\vert^p\d\mug \\
	\le{}& C\,\exp((\frac2p-\decay)\Hradius)\Vert\zFundtrf\Vert_{\Lp^p(\M)}^{p-1}
\end{align*}
where we have used $p>2$, $\rnu\ge 0$, and $\H\relax<0$. By the chain rule, we know $\partial*_\Hradius\Vert\zFundtrf\Vert_{\Lp^p(\M)}^p=p\Vert\zFundtrf\Vert_{\Lp^p(\M)}^{p-1}\partial*_\Hradius\Vert\zFundtrf\Vert_{\Lp^p(\M)}$ and integration therefore proves
\[ \Vert\zFundtrf\Vert_{\Lp^p(\M)} \le C\,\exp((\frac2p-\decay)\Hradius). \]
This implies~\eqref{Stronger_estimates_on_zFundtrf} as the estimate on $\hlevi\zFundtrf$ is already known by \refth{Regularity_Surfaces}.
\end{proof}

Now, we use our estimates on the second fundamental form to prove that the metrics of the CMC-leaves approach the round metric.
\begin{lemma}[Better decay estimates on the metric]\labelth{Better_estimates_metric}
For all constants $\decay\ge\frac52+\outve\in\interval{\frac52}*3$, $\scdecay\ge3+\outve$, $\eta<4$, $p>2$, $\c\ge0$, and $q\in\interval2*p$ with $q<\infty$, there exist two constants $\Hradius_0=\Cof{\Hradius_0}[\decay][\scdecay][\eta][\c]$ and $C=\Cof[\decay][\scdecay][\eta][\c][q]$ with the following property:

Let $\mathcal M=\{\M<\Hradius>\}_{\Hradius>\Hradius_1}$ be a locally unique \Wkpro^{2,p}_{\decay,\scdecay}(\c,\eta) foliation with uniformly timelike Ricci-mass, where $\Hradius_1\ge\Hradius_0$. There exists a diffeomorphism $\Phi:\interval{\Hradius_1}\infty\times\sphere^2\to\outM\setminus\outsymbol K$ as in \refth{Local_Estimates_rnu} with $\partial*_\Hradius\Phi=\rnu\,\nu$ ans
\[ \g<\Hradius>'\to\sphg \text{ in }\Wkp^{2,p}(\sphere^2), \qquad
	\Vert\partial[\Hradius]@{(\g<\Hradius>')} - \H(1-\rnu)\g<\Hradius> '\Vert_{\Wkp^{1,q}(\sphere,\sphg)} \le C\,\exp({-}\decay\Hradius), \]
where $\g<\Hradius>':=\sinh(\Hradius)^{{-}2}\pullback{\Phi<\Hradius>}\g<\Hradius>$ and $\Phi<\Hradius>:=\Phi(\Hradius,\,{\cdot}\,)$.
\end{lemma}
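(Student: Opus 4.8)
\emph{Strategy and set-up.} I would realise the foliation as a normal deformation $\Phi$ and differentiate the rescaled metric $\g<\Hradius>'$ in $\Hradius$: the second assertion will be the first variation of the induced metric with the (already well-controlled) trace-free second fundamental form split off, and the first assertion then follows by integrating this evolution in $\Hradius$. To obtain $\Phi$, I would apply \refth{Local_Estimates_rnu} to each leaf $\M<\Hradius>$ ---legitimate since uniform timelikeness of the Ricci-mass together with \refth{Ricci_mass_well-defined} forces $\vert\HmHaw<\Hradius>\vert$ bounded below--- and glue the resulting local maps, exactly as in the proof of \refth{Ricci_mass_well-defined}, into a smooth $\Phi:\interval{\Hradius_1}\infty\times\sphere^2\to\outM\setminus\outsymbol K$ with $\Phi(\Hradius,\sphere^2)=\M<\Hradius>$. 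Composing with a fixed diffeomorphism of $\sphere^2$ I may assume $\partial*_\Hradius\Phi=\rnu\,\nu$ has no tangential part; then $\rnu>0$ and $\rnu$ obeys~\eqref{rnu_1_ineq}, so $\deform\rnu-1$ is exponentially small while $\Vert\boost\rnu\Vert_{\Lp^\infty(\M<\Hradius>)}$ is comparable to $\vert(\masstd<\Hradius>^\oi)_{\oi=1}^3\vert_{\R^3}/\vert\masstd<\Hradius>^0\vert$, which tends to $0$ by \refth{Ricci_mass_well-defined}.

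\emph{Evolution of $\g<\Hradius>'$ and the second estimate.} The first variation of the induced metric under this purely normal deformation is $\partial[\Hradius]@{\g<\Hradius>}={-}2\rnu\,\zFund$; writing $\zFund=\zFundtrf+\frac12\H<\Hradius>\g<\Hradius>$, inserting $\g<\Hradius>=\sinh(\Hradius)^2\g<\Hradius>'$ and using $2\sinh(\Hradius)\cosh(\Hradius)={-}\H<\Hradius>\sinh(\Hradius)^2$ yields
\[ \partial[\Hradius]@{\g<\Hradius>'}=\H<\Hradius>(1-\rnu)\,\g<\Hradius>'-2\sinh(\Hradius)^{{-}2}\rnu\,\zFundtrf . \]
Thus $\partial[\Hradius]@{\g<\Hradius>'}-\H<\Hradius>(1-\rnu)\g<\Hradius>'={-}2\sinh(\Hradius)^{{-}2}\rnu\,\zFundtrf$, so the second claim reduces to bounding this error in $\Wkp^{1,q}(\sphere,\sphg)$. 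Since $\Phi<\Hradius>:(\sphere^2,\sinh(\Hradius)^2\g<\Hradius>')\to(\M<\Hradius>,\g<\Hradius>)$ is an isometry and $\g<\Hradius>'$ is uniformly bounded ---and bounded away from degeneracy--- in $\Wkp^{2,p}(\sphere^2)$ by \refth{Regularity_Surfaces}, the $\Lp^q$-estimates on $\zFundtrf$ and $\levi\zFundtrf$ from \refth{second_fundamental_form} transfer to estimates on $(\sphere^2,\sphg)$ once the powers of $\sinh(\Hradius)$ produced by the conformal rescaling of the tensor norms and of the area element are tracked; combined with the boundedness of $\Vert\rnu\Vert_{\Wkp^{1,\infty}(\M<\Hradius>)}$ from \refth{Local_Estimates_rnu}, this gives $\Vert 2\sinh(\Hradius)^{{-}2}\rnu\,\zFundtrf\Vert_{\Wkp^{1,q}(\sphere,\sphg)}\le C\exp({-}\decay\Hradius)$.

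\emph{Convergence $\g<\Hradius>'\to\sphg$.} Integrating the evolution equation in $\Hradius$, the error term contributes a $\Wkp^{1,q}$-convergent remainder by the bound just shown, so $\g<\Hradius>'$ agrees up to such a remainder with a pointwise conformal rescaling of a fixed metric; on the other hand \refth{Regularity_Surfaces} and \refth{RoundSpheresBoundedArea} put $\g<\Hradius>'$ within $\Wkp^{2,p}$-distance $O(\exp((2-\decay)\Hradius))$ of the set of round metrics of area $4\pi$ on $\sphere^2$, which is the orbit $\{\chi^*\sphg\}$ of $\sphg$ under the conformal group. Together these force $\g<\Hradius>'$ to converge in $\Wkp^{2,p}(\sphere^2)$ to some $\chi^*\sphg$; replacing $\Phi$ by $\Phi\circ\chi^{-1}$ ---which is again of the form $\partial*_\Hradius\Phi=\rnu\,\nu$, with $\rnu$ pulled back, and only changes the constant in the previous paragraph--- then gives $\g<\Hradius>'\to\sphg$.

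\emph{Main obstacle.} I expect the convergence step to be the crux. Since $\partial*_\Hradius\Phi=\rnu\,\nu$ forbids $\Hradius$-dependent reparametrisations, one must genuinely control the conformal factor $\int_{\Hradius_1}^\Hradius\H<\Hradius'>(1-\rnu<\Hradius'>)\d\Hradius'$ picked up along the foliation; its size is governed by $\boost\rnu\sim\vert(\masstd<\Hradius'>^\oi)_{\oi=1}^3\vert_{\R^3}$, and proving that this integral converges ---\ie upgrading the $\Hradius$-square-summability of the spatial Ricci-mass to the $\Hradius$-summability actually needed here--- is where the integrability statement of \refth{Ricci_mass_well-defined}, the refined lapse estimate~\eqref{rnu_1_ineq}, and the equation $\jacobiext\rnu\equiv 2\sinh(\Hradius)^{{-}2}$ have to be combined carefully. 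The $\sinh(\Hradius)$-bookkeeping in the error estimate is the secondary technical point.
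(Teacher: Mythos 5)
Your proof takes essentially the same route as the paper's: glue the local deformations furnished by \refth{Local_Estimates_rnu} into a global $\Phi$ with $\partial*_\Hradius\Phi=\rnu\,\nu$, compute the first variation of $\g<\Hradius>'$, split off the trace-free second fundamental form, control that error via \refth{second_fundamental_form}, and integrate in $\Hradius$. Your evolution identity $\partial[\Hradius]@{\,\g<\Hradius>'}=\H(1-\rnu)\,\g<\Hradius>'-2\sinh(\Hradius)^{{-}2}\rnu\,\zFundtrf$ agrees with the paper's. The one place you genuinely deviate is the $\Wkp^{2,p}$-convergence of $\g<\Hradius>'$: the paper first obtains a $\Wkp^{1,p}$-limit $\g<\infty>$, then passes to the weak formulation of the scalar curvature to see that $\sc<\infty>'\equiv2$ in $\Lp^p$, and invokes \cite[Thm~A.1]{nerz2015GeometricCharac} to conclude both that $\g<\infty>$ is round and that the convergence upgrades to $\Wkp^{2,p}$. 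You argue instead by proximity of $\g<\Hradius>'$ to the orbit $\{\chi^*\sphg\}$. As stated this does not close on its own: $\Wkp^{1,q}$-Cauchyness together with pointwise-in-$\Hradius$ $\Wkp^{2,p}$-nearness to the orbit does not give $\Wkp^{2,p}$-convergence to a single $\chi^*\sphg$ unless one rules out escape of $\chi_\Hradius$ to infinity in the (non-compact) diffeomorphism group of $\sphere^2$ --- which is precisely what the paper's scalar-curvature step supplies, so you would still need it or an equivalent.

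Your closing remark identifies the real crux: the $\Hradius$-integrability of $\Vert\rnu-1\Vert_{\Lp^\infty(\M<\Hradius>)}\sim\vert(\masstd<\Hradius>^\oi)_{\oi=1}^3\vert_{\R^3}/\vert\masstd<\Hradius>^0\vert$, which is what makes the integrated first variation converge at all. \refth{Ricci_mass_well-defined} only gives $\Hradius$-integrability of $\vert(\masstd<\Hradius>^\oi)_{\oi=1}^3\vert_{\R^3}^2$, and the paper asserts the needed $\Lp^1$-in-$\Hradius$ integrability of $\Vert\rnu-1\Vert$ without spelling out the upgrade from square-summability to summability. You flag this jump but do not resolve it, so the proposal is incomplete at the same place where the paper's exposition is terse; anyone fleshing out either argument must show that bounded and uniformly timelike Ricci-mass actually forces $\vert(\masstd<\Hradius>^\oi)_{\oi=1}^3\vert_{\R^3}$ to be integrable in $\Hradius$, not merely square-integrable, before the first bullet of the lemma can be concluded.
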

\begin{proof}
Again, we assume without loss of generality $\decay=\frac52+\outve$, $\scdecay=3+\outve$, $\dot\bigcup_{\Hradius>\Hradius_0}\M<\Hradius>=\outM$, $p=q$, and $\Hradius_0$ is so large, that we can apply all the results proven so far on every $\M<\Hradius>$ and $\mathcal M$. In particular, \refth{Local_Estimates_rnu} and \refth{Ricci_mass_well-defined} imply that the surfaces are pairwise disjoint. Thus, we can define a global vector field $X$ such that $X|_{\M<\Hradius>}=\rnu\nu$, where $\rnu$ is as in \refth{Local_Estimates_rnu} for any parametrization $\Phi$. Fix some arbitrary $\varsigma>\Hradius_0$ and some arbitrary parametrization $\psi<\varsigma>:\sphere^2\to\M<\varsigma>$ and denote by $\gamma_p:\interval{\Hradius_0}\infty\to\outM$ the integral curve to $X$ starting in $\psi<\varsigma>(p)=\gamma_p(\varsigma)$. Per definition $\gamma_p(\Hradius)\in\M<\Hradius>$. Thus, $\Phi:\interval{\Hradius_0}\infty\times\sphere^2\to\outM:(\Hradius,p)\mapsto\gamma_p(\Hradius)$ is a diffeomorphism as in \refth{Local_Estimates_rnu} and satisfies $\partial*_\Hradius\Phi=\rnu\nu$.

We note that
\[ \lieD{\rnu\nu}(\sinh(\Hradius)^{{-}2}\g) = \frac{1-\rnu}{\sinh(\Hradius)^2}\H\g + 2\sinh(\Hradius)^{{-}2}\zFundtrf \]
and therefore
\begin{equation*}\labeleq{derivative_metric}
 \Vert\partial[\Hradius]@{(\g<\Hradius>')} - \H(1-\rnu)\g<\Hradius>'\Vert_{\Wkp^{1,p}(\M,\g<\Hradius>')}
		\le C\exp({-}\decay\Hradius),
\end{equation*}
where $\g<\Hradius>':=\sinh(\Hradius)^{{-}2}\g<\Hradius>$ denotes the rescaled metric. On the left hand side, the norm with respect to $\g<\Hradius>'\approx\sphg$ is used and not the one with respect to $\g<\Hradius>\approx\sinh(\Hradius)^2\sphg*$ which gives the additional factor $\sinh(\Hradius)^2$. As $\Vert\rnu-1\Vert_{\Wkp^{2,p}(\M<\Hradius>)}$ is integrable, this proves that $\g<\Hradius>'$ converges in $\Wkp^{1,p}(\sphere^2)$ to a metric $\g<\infty>$ on $\sphere^2$. Using
\begin{equation*}\labeleq{weak-sc}
 \sc<\Hradius>' = {-}\laplace[\sphg]\,(\tr[\sphg]\,\g<\Hradius>') + \div[\sphg]\,\div[\sphg]\,\g<\Hradius>' + P_{\g<\Hradius>*'}(\levi[\sphg]*\g<\Hradius>',\levi[\sphg]*\g<\Hradius>'),
\end{equation*}
we see that the scalar curvature $\sc<\infty>'$ of $\g<\infty>'$ is well-defined in $\Wkp^{{-}1,\frac p{p-1}}(\sphere^2)$ and the $\Wkp^{{-}1,\frac p{p-1}}(\sphere^2)$-limit of $\sc<\Hradius>'$. As
\[ \Vert\sc<\Hradius>'-2\Vert_{\Lp^p(\sphere^2)} \le C\,\exp((2-\decay)\Hradius) \xrightarrow{\Hradius\to\infty} 0, \]
this implies that $\sc<\infty>'\equiv2$ in $\Lp^p(\sphere^2)$ and therefore $\g<\infty>$ is---up to reparametrization---the round metric, see for example \cite[Thm~A.1]{nerz2015GeometricCharac}. By changing the initial data $(s,\varphi_s)$ (see construction of $\Phi$) to $(s,\varphi_s\circ\psi)$, we can assume $\psi=\id|_{\sphere^2}$. By \cite[Thm~A.1]{nerz2015GeometricCharac}, this implies furthermore $\g<\Hradius>'\xrightarrow{\Hradius\to\infty}\sphg$ in $\Wkp^{2,p}(\sphere^2)$.
\end{proof}
\begin{remark}\labelth{diffeomorphism_ids_at_infinity}
The above proves that the diffeomorphism $\Phi:\interval{\Hradius_0}\infty\times\sphere^2\to\outM$ with $\partial*_\Hradius\Phi=\rnu\nu$ is uniquely determined by its limit value $\lim_\Hradius\Phi(\Hradius,\,{\cdot}\,)$ which can (in a well-defined sense) be interpreted as diffeomorphism of $\sphere^2$. In particular if we think of $\M<\Hradius>$ as almost-round spheres in $\hyperbolicspace$, it does not seem to be surprising that the \lq initial\rq\ value problem
\[ \partial[\Hradius]@\Phi = \rnu\,\nu, \qquad
		\Phi(\Hradius,\,{\cdot}\,)\xrightarrow{\Hradius\to\infty}\varphi \]
has a unique solution---where $\varphi:\sphere^2\to\sphere^2$ is an arbitrary diffeomorphism. However, this is a crucial step in the proof of the main theorem. Furthermore, it is a \lq CMC-version\rq\ of the coordinate result that the change from one asymptotically hyperbolic chart to another is (asymptotically) an isometry of the hyperbolic space which is characterized by an conformal map of the sphere at infinity, see~\cite{chrusciel2003mass}.
\end{remark}

\section{Proof of \texorpdfstring{\refth{MainTheorem}}{of the first main theorem}}\label{ProofOfMainTheorem}

\def\paragraph#1{\smallskip\par\noindent\textbf{#1}:\ }
We first explain the idea of the proof of \refth{MainTheorem}. Then, we explain the idea of each single step in more detail before finally proving \refth{MainTheorem}.
\paragraph{Overview of the proof of \refth{MainTheorem}}
In the first step, we fix a large radius~$\varsigma$ and construct a \Ckah^0_{\decay-2,\scdecay} chart of the interior of $\M<\varsigma>$ (outside of a compact set) by mapping each CMC-leaf to a geodesic sphere in the hyperbolic space, see Figure~\ref{Image__round} \vpageref[below]{Image__round}. In the second step, we repeat the construction of the first step but modify the construction a little bit to get a $\Ck^0_{\decay,\scdecay}$- and \Ckah^1_{\decay-1,\scdecay} chart of the interior of $\M<\varsigma>$ (outside of a compact set), see Figures~\ref{Image__non-round} and~\ref{Image__with-graph} on pages~\pageref{Image__non-round} and~\pageref{Image__with-graph}. We then prove several sharp estimates on the difference between the image of every CMC-leaf along the first and second chart. In the third step, we prove that we can take the limit as $\varsigma\to\infty$ to get two global charts of $\outM$ (outside of a compact set) and that all proven inequalities remain true.  Finally in the fourth step, we use the regularity of the Ricci curvature to conclude that the constructed chart is in fact \Ckah^2_{\decay,\scdecay}.\smallskip

To simplify notation, we suppress the dependency on $\varsigma$ in the first two steps and identify $\M<\Hradius>$ with $\{\Hradius\}\times\sphere^2$ using \refth{Better_estimates_metric}.

\paragraph{Overview of the first two steps}\label{OverviewFirstSteps}
To construct the mentioned charts in the first two steps, we fix a nice parametrization $\varphi<\varsigma>:\hsphere^2_\varsigma(\centerz<\varsigma>)\to\M<\varsigma>$ of the (large) CMC-leaf $\M<\varsigma>$, see \refth{Better_estimates_metric}. We then construct the chart of the interior of $\M<\varsigma>$ by \lq integrating\rq\ a chosen function along $\Hradius$, \ie for the first and second step choose a solution $\Phi:\interval\Hradius_0\varsigma\times\sphere^2\to\hyperbolicspace$ to $\partial*_\Hradius\Phi=f\,\hnu$ and $\Phi(\varsigma,\emptyarg)=\varphi<\varsigma>^{{-}1}$, where the function $f$ depends on the step and is defined using the real lapse function $\rnu$.
\begin{Figure}<-.2em>[Image__round]%
 {  \begin{tikzpicture}[scale=.45]
		\def\colorlar{black!0!red}			\expandafter\let\csname Rcolor1\endcsname\colorlar
\def\colormid{black!0!blue}			\expandafter\let\csname Rcolor2\endcsname\colormid
\def\colorsma{black!15!green}		\expandafter\let\csname Rcolor3\endcsname\colorsma
\def\coloralar{black!0!red}			\expandafter\let\csname Gcolor1\endcsname\coloralar
\def\coloramid{black!30!blue}		\expandafter\let\csname Gcolor2\endcsname\coloramid
\def\colorasma{black!35!green}	\expandafter\let\csname Gcolor3\endcsname\colorasma
\def\stylelar{}									\expandafter\let\csname Rstyle1\endcsname\stylelar	
\def\stylemid{densely dashed}		\expandafter\let\csname Rstyle2\endcsname\stylemid
\def\stylesma{}									\expandafter\let\csname Rstyle3\endcsname\stylesma
\def\stylealar{}								\expandafter\let\csname Gstyle1\endcsname\stylealar	
\def\styleamid{}								\expandafter\let\csname Gstyle2\endcsname\styleamid
\def\styleasma{dashed}					\expandafter\let\csname Gstyle3\endcsname\styleasma
\def\colorcen{purple}
\def\ptssize{.15}
\def\pointerth{thick}
%
%\tikzstyle{unregulary dashed}=    []
\def\coloramid{red!35!blue}
\def\colorasma{black!50!green}
\def\colorrnu{black}							\def\stylernu{}
\def\colorarnu{orange}						\def\stylearnu{densely dashed}
\def\colorgraph{green}						\def\stylegraph{}
\def\coloragraph{blue}						\def\stylegraph{}
\def\ptssize{.15}
\def\factormid{.9}
\def\factorsma{.75}

\footnotesize
\coordinate (center1) at (0,0);
\coordinate (center2) at (-.5,1.5);
\coordinate (center3) at (-.25,2);
% Berechne innere Kreise
\pgfmathsetseed{3}% Stellt sicher, dass die folgenden Punkte immer die gleichen sind.
\coordinate (G2-P1) at ($(center2)+\factormid*(rnd,rnd)+(-9.6,0)$);	\coordinate (G2-P2) at ($(center2)+\factormid*(rnd,rnd)+(-6.79,6.79)$);
\coordinate (G2-P3) at ($(center2)+\factormid*(rnd,rnd)+(0,9.6)$);	\coordinate (G2-P4) at ($(center2)+\factormid*(rnd,rnd)+(6.79,6.79)$);
\coordinate (G2-P5) at ($(center2)+\factormid*(rnd,rnd)+(9.6,0)$);	\coordinate (G2-P6) at ($(center2)+\factormid*(rnd,rnd)+(6.79,-6.79)$);
\coordinate (G2-P7) at ($(center2)+\factormid*(rnd,rnd)+(0,-9.6)$);	\coordinate (G2-P8) at ($(center2)+\factormid*(rnd,rnd)+(-6.79,-6.79)$);
\coordinate (G3-P1) at ($(center3)+\factorsma*(rnd,rnd)+(-7.2,0)$);	\coordinate (G3-P2) at ($(center3)+\factorsma*(rnd,rnd)+(-5.09,5.09)$);
\coordinate (G3-P3) at ($(center3)+\factorsma*(rnd,rnd)+(0,7.2)$);	\coordinate (G3-P4) at ($(center3)+\factorsma*(rnd,rnd)+(5.09,5.09)$);
\coordinate (G3-P5) at ($(center3)+\factorsma*(rnd,rnd)+(7.2,0)$);	\coordinate (G3-P6) at ($(center3)+\factorsma*(rnd,rnd)+(5.09,-5.09)$);
\coordinate (G3-P7) at ($(center3)+\factorsma*(rnd,rnd)+(0,-7.2)$);	\coordinate (G3-P8) at ($(center3)+\factorsma*(rnd,rnd)+(-5.09,-5.09)$);
%
% Nur gezeichnetes soll Platz reservieren, also hier nichts reservieren
\begin{pgfinterruptboundingbox}
	\begin{scope}
		% Nach unten abschneiden
		\clip (-100,-4) -- (-100,100) -- (100,100) -- (100,-4);
		\path [name path global=G1] (center1) circle (12);
		\path [name path global=G2] plot [smooth cycle] coordinates
			{ (G2-P1) (G2-P2) (G2-P3) (G2-P4) (G2-P5) (G2-P6) (G2-P7) (G2-P8) };
		\path [name path global=G3] plot [smooth cycle] coordinates
			{ (G3-P1) (G3-P2) (G3-P3) (G3-P4) (G3-P5) (G3-P6) (G3-P7) (G3-P8) };
		\path [name path global=R1] (center1) circle (12);
		\path [name path global=R2] (center2) circle (9.6);
		\path [name path global=R3] (center3) circle (7.2);
		% Radiale Strecken
		\path [name path global=l1	]	(center1) -- ($(center1)+(6,12)$);
		\path [name path global=l2	]	(center1) -- ($(center1)+(-10,14)$);
		\path [name path global=l3	]	(center1) -- ($(center1)+(-12,0)$);
		\path [name path global=l4	]	(center1) -- ($(center1)+(-14,-4)$);
		\path [name path global=l5	]	(center1) -- ($(center1)+(12,0)$);
		\path [name path global=l6	]	(center1) -- ($(center1)+(-12,3.5)$);
		\path [name path global=l7	]	(center1) -- ($(center1)+(12,2)$);
		\path [name path global=l1-	]	(center1) -- ($(center1)+(6,11.9)$);
		\path [name path global=l2-	]	(center1) -- ($(center1)+(-10,14.1)$);
		\path [name path global=l3-	]	(center1) -- ($(center1)+(-12,.1)$);
		\path [name path global=l4-	]	(center1) -- ($(center1)+(-14,-3.9)$);
		\path [name path global=l5-	]	(center1) -- ($(center1)+(12,-.1)$);
		\path [name path global=l6-	]	(center1) -- ($(center1)+(-12,3.6)$);
		\path [name path global=l7-	]	(center1) -- ($(center1)+(12,1.9)$);
		\path [name path global=l1+	]	(center1) -- ($(center1)+(6,12.1)$);
		\path [name path global=l2+	]	(center1) -- ($(center1)+(-10,13.9)$);
		\path [name path global=l3+	]	(center1) -- ($(center1)+(-12,-.1)$);
		\path [name path global=l4+	]	(center1) -- ($(center1)+(-14,-4.1)$);
		\path [name path global=l5+	]	(center1) -- ($(center1)+(12,.1)$);
		\path [name path global=l6+	]	(center1) -- ($(center1)+(-12,3.4)$);
		\path [name path global=l7+	]	(center1) -- ($(center1)+(12,2.1)$);
		\coordinate (R1-E) at ($(center1)+(12,0)$);
		\coordinate (R1-W) at ($(center1)-(12,0)$);
		\coordinate (R2-E) at ($(center2)+(9.6,0)$);
		\coordinate (R2-W) at ($(center2)-(9.6,0)$);
		\coordinate (R3-E) at ($(center3)+(7.2,0)$);
		\coordinate (R3-W) at ($(center3)-(7.2,0)$);
		% Schnitt von Radialen Strecken mit Kreisen
		\foreach \ln in {1,...,7} {
			\path [name intersections={of= G1 and l\ln}] coordinate (G1-\ln) at (intersection-1);
			\path [name intersections={of= G2 and l\ln}] coordinate (G2-\ln) at (intersection-1);
			\path [name intersections={of= G2 and l\ln-}] coordinate (G2-\ln-) at (intersection-1);
			\path [name intersections={of= G2 and l\ln+}] coordinate (G2-\ln+) at (intersection-1);
			\path [name intersections={of= R1 and l\ln}] coordinate (R1-\ln) at (intersection-1);
			\path [name intersections={of= R2 and l\ln}] coordinate (R2-\ln) at (intersection-1);
			\path [name path=G2orth] (G2-\ln) -- ($(G2-\ln-)!10cm!90:(G2-\ln+)$);
			\path [name path=R2orth] (R2-\ln) -- (center2);
			\path [name intersections={of= G3 and G2orth}] coordinate (G3-\ln) at (intersection-1);
			\path [name intersections={of= R3 and R2orth}] coordinate (R3-\ln) at (intersection-1);
			\path [name path=R3orth] ($(center3)+2*($(R3-\ln)-(center3)$)$) -- (center3);
			\path [name intersections={of= G2 and R2orth}] coordinate (R2-G2-\ln) at (intersection-1);
			\path [name intersections={of= G3 and R3orth}] coordinate (R3-G3-\ln) at (intersection-1);
		}
	\end{scope}
	\path [name path global=clip1] (-100,-4) -- (100,-4);
	\path [name path global=clip2] (-100,0) -- (100,0);
	\path [name path global=clip3] (0,0) -- (0,100);
	\path [name intersections={of=R1 and clip1}] coordinate (SW) at (intersection-1) coordinate (SE) at (intersection-2);
	\path [name intersections={of=R1 and clip2}] coordinate  (W) at (intersection-1) coordinate  (E) at (intersection-2);
	\path [name intersections={of=R1 and clip3}] coordinate (N)  at (intersection-1);
\end{pgfinterruptboundingbox}
\makeatletter
% #1: R oder G für Rund oder Graph-3.23) -- (100,-3.23); \path [name path = clip2] (-100,-4)
% #2: Wievielte Sphäre (1/2/3)
\NewDocumentCommand\drawasphere{smm}{%
	% Platz reservieren
	\IfBooleanTF{#1}{%
		\draw [thick,draw opacity=0] (SW) -- (SE) -- ($(E)+(0,.1)$) -- ($(E)-(0,.1)$)%
			-- ($(N)-(.1,0)$) -- ($(N)+(.1,0)$) -- ($(W)-(0,.1)$) -- ($(W)+(0,.1)$);%
	}\relax%
	\begin{pgfinterruptboundingbox}
		\begin{scope}
			\clip (-100,-3.25) -- (-100,100) -- (100,100) -- (100,-3.25);
			\draw [\csname #2color#3\endcsname,use path=#2#3,thick,\csname #2style#3\endcsname] (0,0);
			\ifx #2R\drawasphere@addtoround#3\else\ifx 1#3\drawasphere@addtoround1\fi\fi%
		\end{scope}
		\begin{scope}
			\clip (-100,-4) -- (-100,-3.23) -- (100,-3.23) -- (100,-4);
			\draw [\csname #2color#3\endcsname,use path=#2#3,thick,densely dotted] (0,0);
		\end{scope}
	\end{pgfinterruptboundingbox}
}
\gdef\drawasphere@addtoround#1{%
	\ifx#11\drawasphere@addtoround@ 1\varsigma{12}{-30}\else\ifx#12\drawasphere@addtoround@ 2\Hradius{9.6}{-30}\else\ifx#13 \drawasphere@addtoround@ 3s{7.2}{-30}\fi\fi\fi%
}
\gdef\drawasphere@addtoround@#1#2#3#4{%
	\draw [\csname Rcolor#1\endcsname,fill] (center#1) circle (\ptssize);%
	\coordinate (center-r) at ($(center#1)!#3!#4:($(center#1)-(1,0)$)$);%
	\draw [|<->|,black!75!white] (center#1) -- (center-r);%
	\draw [black!75!white] ($(center#1)!.4!(center-r)$) node [fill=white] () {\textcolor{black!75!white}{$#2$}};%
}
\makeatother
\NewDocumentCommand\drawspheres{s}{\IfBooleanTF{#1}{\drawasphere R1}{\drawasphere*R1} \drawasphere R2 \drawasphere R3}
\NewDocumentCommand\drawgraphs{s}{ \IfBooleanTF{#1}{\drawasphere G1}{\drawasphere*G1} \drawasphere G2 \drawasphere G3}
		\draw (R1-4.south east) node [right] () {\textcolor{\colorlar}{$\varphi<\varsigma>^{{-}1}(p)=\roPhi(\varsigma,p)$}};
		\draw (R1-1) node [right]	() {\textcolor{\colorlar}{$\varphi<\varsigma>^{{-}1}(p')=\roPhi(\varsigma,p')$}};
		\draw (R2-4) node [above]	() {\textcolor{\colormid}{$\roPhi(\Hradius,p)$}};
		\draw (R2-1) node [right]	() {\textcolor{\colormid}{$\roPhi(\Hradius,p')$}};
		\draw (R3-4) node [right]	() {\textcolor{\colorsma}{$\roPhi(s,p)$}};
		\draw (R3-1) node [left]	() {\textcolor{\colorsma}{$\roPhi(s,p')$}};
		\draw (R1-W) node [left]	() {\textcolor{\colorlar}{${}_\varsigma\!W$}};
		\draw (R1-E) node [right]	() {\textcolor{\colorlar}{${}_\varsigma\!E$}};
		\draw [\colorsma,fill] (R3-W) circle (\ptssize);
		\draw [\colorsma,fill] (R3-E) circle (\ptssize);
		\draw (R3-W) node [left]	() {\textcolor{\colorsma}{${}_s\!W$}};
		\draw (R3-E) node [right]	() {\textcolor{\colorsma}{${}_s\!E$}};
		\draw (R3-3) node [right] () {\textcolor{\colorsma}{$\Phi(s,\varphi({}_\varsigma\!W))$}};%
		\draw (R3-5) node [left] () {\textcolor{\colorsma}{$\Phi(s,\varphi({}_\varsigma\!E))$}};%
		\begin{scope}
			\drawspheres
			\foreach \ln in {1,...,5} {
				\draw [\colorlar,fill] (R1-\ln) circle (\ptssize);
				\draw [\colormid,fill] (R2-\ln) circle (\ptssize);
				\draw [\colorsma,fill] (R3-\ln) circle (\ptssize);
				\draw [|->,\colorrnu,\stylernu,\pointerth] (R1-\ln) -- (R2-\ln);
				\draw [|->,\colorrnu,\stylernu,\pointerth] (R2-\ln) -- (R3-\ln);
			}
			\draw [->,thick,\colorcen] (center1) -- (center2);
			\draw [->,thick,\colorcen] (center2) -- (center3);
		\end{scope}
	\end{tikzpicture}
\par\noindent%
 \hbox{%
	\begin{tikzpicture}[scale=.75]\footnotesize
	 \begin{scope}
	  \clip (-.27,-.04) -- (-.27,-.18) -- (.27,-.18) -- (.27,-.04);
	  \draw [\colorlar,thick,densely dotted] (0,0) circle (.25);
	 \end{scope}
	  \clip (-.27,.27) -- (-.27,-.05) -- (.27,-.05) -- (.27,.27);
	  \draw [\colorlar,\stylelar,thick] (0,0) circle (.25);
	\end{tikzpicture}}\ :\
 $\roPhi(\M<\varsigma>)=\hsphere^2_\varsigma(\centerz<\varsigma>)$\hspace*{1.8em}
 \hbox{%
	\begin{tikzpicture}[scale=.75]\footnotesize
	 \begin{scope}
	  \clip (-.27,-.04) -- (-.27,-.18) -- (.27,-.18) -- (.27,-.04);
	  \draw [\colormid,thick,densely dotted] (0,0) circle (.25);
	 \end{scope}
	  \clip (-.27,.27) -- (-.27,-.05) -- (.27,-.05) -- (.27,.27);
	  \draw [\colormid,\stylemid,thick] (0,0) circle (.25);
	\end{tikzpicture}}\ :\
 $\roPhi(\M<\Hradius>)=\hsphere^2_\varsigma(\centerz<\Hradius>)$\hspace*{1.8em}
 \hbox{%
	\begin{tikzpicture}[scale=.75]\footnotesize
	 \begin{scope}
	  \clip (-.27,-.04) -- (-.27,-.18) -- (.27,-.18) -- (.27,-.04);
	  \draw [\colorsma,thick,densely dotted] (0,0) circle (.25);
	 \end{scope}
	  \clip (-.27,.27) -- (-.27,-.05) -- (.27,-.05) -- (.27,.27);
	  \draw [\colorsma,\stylesma,thick] (0,0) circle (.25);
	\end{tikzpicture}}\ :\
 $\roPhi(\M<s>)=\hsphere^2_s(\centerz<s>)$%
 \smallskip\par\noindent%
 \raisebox{.25em}{\hbox{%
	\begin{tikzpicture}[scale=.75]\footnotesize
	  \draw [|->,\colorrnu,\stylernu,thick] (0,0) -- (1,0);
	\end{tikzpicture}\vspace*{.5em}}}\ :\
	$\partial*_\Hradius\roPhi=(1{\,+\,}\roboost{\rnu})\,\ronu$\hspace*{1.8em}
 \raisebox{.25em}{\hbox{%
	\begin{tikzpicture}[scale=.75]\footnotesize
	  \draw [|->,\colorcen,thick] (0,0) -- (1,0);
	\end{tikzpicture}\vspace*{.5em}}}\ :\
	$\partial*_\Hradius\centerz<\Hradius>$}%
 [Round images of the CMC-leaves and the lapse function]%
 {$\roPhi=\roPhi[\varsigma]$-images of the CMC-leaves and the lapse function}
 \emph{Construction:} Start with a non-isometric embedding $\varphi<\varsigma>^{{-}1}$ of $\M<\varsigma>$ to a hyperbolic sphere $\hsphere^2_\varsigma(\centerz<\varsigma>)$, \ie the outer (red) sphere denotes $\varphi^{{-}1}(\M<\varsigma>)=\roPhi(\M<\varsigma>)$. Now, choose $\roPhi$ with $\roPhi(\varsigma,\emptyarg)=\varphi<\varsigma>^{{-}1}$ and $\partial*_\Hradius\roPhi=\roboost{\rnu}\ronu$, where $\roboost{\rnu}$ and $\ronu$ denote the boosting part of the ($\roPhi$-pushforward of the) lapse function $\rnu$  with respect to the metric induced on $\roPhi(\emptyarg,\sphere^2)$ by $\houtg*$ and the outer unit normal with respect to $\houtg*$. This leads to $\roPhi(\M<\varsigma>)$ (the middle, dashed, blue sphere) and from there we get $\Phi(\M<s>)$ (the green one).\smallskip\par\noindent
 \emph{Note:} Even if $\M<\varsigma>$ and $\M<s>$ are exactly round, \ie have constant Gau\ss\ curvature, and $\varphi<\varsigma>$ is an isometry, the map $\roPhi(s,\varphi(\emptyarg))$ of $\M<s>=\{s\}\times\sphere^2$ to $\hsphere^2_\Hradius(\centerz<s>)\hookrightarrow\hyperbolicspace^3$ can still be non-trivial as we \lq boosted\rq\ its coordinates. This can easily be seen in the picture by looking at $\roPhi(\M<s>)$ (the inner, green sphere): We see that its \lq northern hemisphere\rq\ (the upper part of $\roPhi(\M<s>)$ bounded by ${}_s\!W$ and ${}_s\!E$) is by far smaller than the $\roPhi$-image (bounded by $\roPhi(s,\varphi({}_\varsigma\!W))$ and $\roPhi(s,\varphi({}_\varsigma\!E))$) of the northern hemisphere of $\roPhi(\M<r>)$. More precisely, $\roPhi(s,\emptyarg):\sphere^2\to\hsphere^2_s(\centerz<s>)$ is a conformal map but (in most cases) non-trivial.\smallskip\par\noindent%
 \emph{Note:} As we only want to describe the idea, we simplify the images by drawing an Euclidean setting, \eg we use coordinate spheres instead of the coordination of hyperbolic spheres. Furthermore, the arrows actually do not picture $\partial*\Hradius\Phi=\roboost\,\rnu$, but its non-infinitesimal version, \ie $\houtexp^{{-}1}_{\Phi(\varsigma,\emptyarg)}(\Phi(\Hradius,\emptyarg))$ etc. The same is true for the following pictures.
\end{Figure}
\paragraph{The first step}
In the first step, \pagebreak[1]we only look at the boosting part of $\rnu$ and call the above solution $\roPhi$, \pagebreak[1]\ie $f$ is here the boosting part $\roboost<\Hradius>{\rnu<\Hradius>}$ of the lapse function $\rnu$ with respect to the metric $\rog<\Hradius>$ being the pullback along $\roPhi(\Hradius,\emptyarg)$ of the one induced on $\roPhi(\Hradius,\sphere^2)$ by $\houtg$, see Figure~\ref{Image__round} \vpageref[above]{Image__round}.\pagebreak[1]

As the initial data $\roPhi(\varsigma,\emptyarg)=\varphi<\varsigma>^{{-}1}$ is a chosen round (hyperbolic) sphere $\sphere^2_\varsigma(\centerz<\varsigma>)$ and the \emph{boosting functions} on round spheres correspond to boosts of the sphere, see \refth{Boosting_deformations}, this implies that $\roPhi$ maps every $\M<\Hradius>$ to a round sphere $\hsphere^2_\Hradius(\centerz<\Hradius>)$ in the hyperbolic space. This explains the upper-index $\roundr$ being short for \lq round\rq. Now, we can prove that the metric $\g<\Hradius>$ of $\M<\Hradius>$ is quite close to $\rog<\Hradius>$. This finishes the first step.

Note that the error between $\outg(\partial*_\Hradius,\partial*_\Hradius)=\outg(\rnu\nu,\rnu,\nu)=\rnu^2$ and its counterpart $\houtg(\partial*_\Hradius\roPhi,\partial*_\Hradius\roPhi)=(\roboost<\Hradius>{\rnu<\Hradius>})^2$ is of the same order as $\deform{\rnu<\Hradius>}$, \ie decays only as $\exp((2-\decay)\Hradius)$ and therefore insufficiently fast. Furthermore, we do not have any control on the derivative of this error. Thus, we only constructed a $\Ck^0_{\decay-2,\scdecay}$-asymptotically hyperbolic chart.

\paragraph{The second step}
In the second step, we choose the full lapse function $f=\rnu$ as derivative of $\Phi$, see the construction explained in the \lq overview of the first two steps\rq \vpageref[above]{OverviewFirstSteps} and Figure~\ref{Image__non-round} \vpageref[below]{Image__non-round}.
\begin{Figure}[Image__non-round]%
 {  \begin{tikzpicture}[scale=.45]
		
		\draw (G1-4) node [below]	() {\textcolor{\coloralar}{$\varphi<\varsigma>^{{-}1}(p)=\Phi(\varsigma,p)$}};
		\draw (G1-1) node [right]	() {\textcolor{\coloralar}{$\varphi<\varsigma>^{{-}1}(p')=\Phi(\varsigma,p')$}};
		\draw (G2-4) node [above]	() {\textcolor{\coloramid}{$\Phi(\Hradius,p)$}};
		\draw (G2-1) node [right]	() {\textcolor{\coloramid}{$\Phi(\Hradius,p')$}};
		\draw (G3-4) node [right]	() {\textcolor{\colorasma}{$\Phi(s,p)$}};
		\draw (G3-1) node [left]	() {\textcolor{\colorasma}{$\Phi(s,p')$}};
		\path ($(0,0)-(0,0 -| G1-4)$) node [below,draw opacity=0]	() {\textcolor{white}{$\varphi<\varsigma>^{{-}1}(p)=\Phi(\varsigma,p)$}};
		\begin{scope}
			\drawgraphs
			\foreach \ln in {1,...,5} {
				\draw [\coloralar,fill] (G1-\ln) circle (\ptssize);
				\draw [\coloramid,fill] (G2-\ln) circle (\ptssize);
				\draw [\colorasma,fill] (G3-\ln) circle (\ptssize);
				\draw [|->,\colorarnu,\stylearnu,\pointerth] (G1-\ln) -- (G2-\ln);
				\draw [|->,\colorarnu,\stylearnu,\pointerth] (G2-\ln) -- (G3-\ln);
			}
		\end{scope}
	\end{tikzpicture}
\par\noindent%
 \hbox{%
	\begin{tikzpicture}[scale=.75]\footnotesize
	 \begin{scope}
	  \clip (-.27,-.04) -- (-.27,-.18) -- (.27,-.18) -- (.27,-.04);
	  \draw [\coloralar,thick,densely dotted] (0,0) circle (.25);
	 \end{scope}
	  \clip (-.27,.27) -- (-.27,-.05) -- (.27,-.05) -- (.27,.27);
	  \draw [\coloralar,\stylealar,thick] (0,0) circle (.25);
	\end{tikzpicture}}\ :\
 $\Phi(\M<\varsigma>)\subseteq\hyperbolicspace^3$\hspace*{1.8em}
 \hbox{%
	\begin{tikzpicture}[scale=.75]\footnotesize
	 \begin{scope}
	  \clip (-.27,-.04) -- (-.27,-.18) -- (.27,-.18) -- (.27,-.04);
	  \draw [\coloramid,thick,densely dotted] (0,0) circle (.25);
	 \end{scope}
	  \clip (-.27,.27) -- (-.27,-.05) -- (.27,-.05) -- (.27,.27);
	  \draw [\coloramid,\styleamid,thick] (0,0) circle (.25);
	\end{tikzpicture}}\ :\
 $\Phi(\M<\Hradius>)\subseteq\hyperbolicspace^3$\hspace*{1.8em}%
 \hbox{%
	\begin{tikzpicture}[scale=.75]\footnotesize
	 \begin{scope}
	  \clip (-.27,-.04) -- (-.27,-.18) -- (.27,-.18) -- (.27,-.04);
	  \draw [\colorasma,thick,densely dotted] (0,0) circle (.25);
	 \end{scope}
	  \clip (-.27,.27) -- (-.27,-.05) -- (.27,-.05) -- (.27,.27);
	  \draw [\colorasma,\styleasma,thick] (0,0) circle (.25);
	\end{tikzpicture}}\ :\
 $\Phi(\M<s>)\subseteq\hyperbolicspace^3$%
\smallskip\par\noindent%
 \raisebox{.25em}{\hbox{%
	\begin{tikzpicture}[scale=.75]\footnotesize
	  \draw [|->,\colorarnu,\stylearnu,thick] (0,0) -- (1,0);
	\end{tikzpicture}\vspace*{.5em}}}\ :\
	$\partial*_\Hradius\Phi=\rnu\,\hnu$}%
 [Non-round images of the CMC-leaves and the lapse function]%
 {$\Phi$-images of the CMC-leaves and the lap6se function}
 \emph{Construction:} Start with a non-isometric embedding $\varphi<\varsigma>^{{-}1}$ of $\M<\varsigma>$ to a hyperbolic sphere $\hsphere^2_\varsigma(\centerz<\varsigma>)$, \ie the outer (red) sphere denotes $\varphi^{{-}1}(\M<\varsigma>)=\Phi(\M<\varsigma>)$. Now, choose $\Phi$ with $\Phi(\varsigma,\emptyarg)=\varphi<\varsigma>^{{-}1}$ and $\partial*_\Hradius\Phi=\rnu\hnu$, where $\hnu$ denotes the outer unit normal with respect to $\houtg*$ and where $\rnu$ is identified with its pushforward along $\Phi$. This leads to $\Phi(\M<\varsigma>)$ (the middle, blue sphere) and from there we get $\Phi(\M<s>)$ (the small, dashed, green one).%
\end{Figure}%
In particular, this implies that the part of the metric $\outg$ orthogonal to $\M<\Hradius>$ and the one of the pullback of $\houtg$ along $\Phi$ are identical which solves the mentioned main problem of the construction in step one.

Now, we compare $\Phi$ and $\roPhi$ to conclude that $\Phi(\M<\Hradius>)$ is a graph above $\roPhi(\M<\Hradius>)=\hsphere^2_\Hradius(\centerz<\Hradius>)$, \ie there exists a function $\graphf<\Hradius>\in\Wkp^{2,p}(\roPhi( \M<\Hradius>))$ such that $\graphF<\Hradius>:=\text{exp}(\graphf\,\ronu)$ is a bijection from $\roPhi(\M<\Hradius>)$ to $\Phi(\M<\Hradius>)$, where $\ronu$ is the outer unit normal of $\roPhi(\M<\Hradius>)$ with respect to $\houtg$, see Figure~\ref{Image__with-graph} \vpageref[above]{Image__with-graph}. Furthermore, we can show that $\Phi(\M<\Hradius>)$ has (with respect to $\houtg$) almost constant mean curvature, where the error decays as $\Hradius\to\infty$ surprisingly fast---at least it is on the first glimpse surprising.
\begin{Figure}<-.2em>%
 [Image__with-graph]%
 {	\begin{tikzpicture}[scale=.45]\footnotesize
		
		\def%
			\def\colorlar{black!0!red}			\expandafter\let\csname Rcolor1\endcsname\colorlar
\def\colormid{black!0!blue}			\expandafter\let\csname Rcolor2\endcsname\colormid
\def\colorsma{black!15!green}		\expandafter\let\csname Rcolor3\endcsname\colorsma
\def\coloralar{black!0!red}			\expandafter\let\csname Gcolor1\endcsname\coloralar
\def\coloramid{black!30!blue}		\expandafter\let\csname Gcolor2\endcsname\coloramid
\def\colorasma{black!35!green}	\expandafter\let\csname Gcolor3\endcsname\colorasma
\def\stylelar{}									\expandafter\let\csname Rstyle1\endcsname\stylelar	
\def\stylemid{densely dashed}		\expandafter\let\csname Rstyle2\endcsname\stylemid
\def\stylesma{}									\expandafter\let\csname Rstyle3\endcsname\stylesma
\def\stylealar{}								\expandafter\let\csname Gstyle1\endcsname\stylealar	
\def\styleamid{}								\expandafter\let\csname Gstyle2\endcsname\styleamid
\def\styleasma{dashed}					\expandafter\let\csname Gstyle3\endcsname\styleasma
\def\colorcen{purple}
\def\ptssize{.15}
\def\pointerth{thick}
%
%\tikzstyle{unregulary dashed}=    []
\def\coloramid{red!35!blue}
\def\colorasma{black!50!green}
\def\colorrnu{black}							\def\stylernu{}
\def\colorarnu{orange}						\def\stylearnu{densely dashed}
\def\colorgraph{green}						\def\stylegraph{}
\def\coloragraph{blue}						\def\stylegraph{}
\def\ptssize{.15}
\def\factormid{.9}
\def\factorsma{.75}

\footnotesize
\coordinate (center1) at (0,0);
\coordinate (center2) at (-.5,1.5);
\coordinate (center3) at (-.25,2);
% Berechne innere Kreise
\pgfmathsetseed{3}% Stellt sicher, dass die folgenden Punkte immer die gleichen sind.
\coordinate (G2-P1) at ($(center2)+\factormid*(rnd,rnd)+(-9.6,0)$);	\coordinate (G2-P2) at ($(center2)+\factormid*(rnd,rnd)+(-6.79,6.79)$);
\coordinate (G2-P3) at ($(center2)+\factormid*(rnd,rnd)+(0,9.6)$);	\coordinate (G2-P4) at ($(center2)+\factormid*(rnd,rnd)+(6.79,6.79)$);
\coordinate (G2-P5) at ($(center2)+\factormid*(rnd,rnd)+(9.6,0)$);	\coordinate (G2-P6) at ($(center2)+\factormid*(rnd,rnd)+(6.79,-6.79)$);
\coordinate (G2-P7) at ($(center2)+\factormid*(rnd,rnd)+(0,-9.6)$);	\coordinate (G2-P8) at ($(center2)+\factormid*(rnd,rnd)+(-6.79,-6.79)$);
\coordinate (G3-P1) at ($(center3)+\factorsma*(rnd,rnd)+(-7.2,0)$);	\coordinate (G3-P2) at ($(center3)+\factorsma*(rnd,rnd)+(-5.09,5.09)$);
\coordinate (G3-P3) at ($(center3)+\factorsma*(rnd,rnd)+(0,7.2)$);	\coordinate (G3-P4) at ($(center3)+\factorsma*(rnd,rnd)+(5.09,5.09)$);
\coordinate (G3-P5) at ($(center3)+\factorsma*(rnd,rnd)+(7.2,0)$);	\coordinate (G3-P6) at ($(center3)+\factorsma*(rnd,rnd)+(5.09,-5.09)$);
\coordinate (G3-P7) at ($(center3)+\factorsma*(rnd,rnd)+(0,-7.2)$);	\coordinate (G3-P8) at ($(center3)+\factorsma*(rnd,rnd)+(-5.09,-5.09)$);
%
% Nur gezeichnetes soll Platz reservieren, also hier nichts reservieren
\begin{pgfinterruptboundingbox}
	\begin{scope}
		% Nach unten abschneiden
		\clip (-100,-4) -- (-100,100) -- (100,100) -- (100,-4);
		\path [name path global=G1] (center1) circle (12);
		\path [name path global=G2] plot [smooth cycle] coordinates
			{ (G2-P1) (G2-P2) (G2-P3) (G2-P4) (G2-P5) (G2-P6) (G2-P7) (G2-P8) };
		\path [name path global=G3] plot [smooth cycle] coordinates
			{ (G3-P1) (G3-P2) (G3-P3) (G3-P4) (G3-P5) (G3-P6) (G3-P7) (G3-P8) };
		\path [name path global=R1] (center1) circle (12);
		\path [name path global=R2] (center2) circle (9.6);
		\path [name path global=R3] (center3) circle (7.2);
		% Radiale Strecken
		\path [name path global=l1	]	(center1) -- ($(center1)+(6,12)$);
		\path [name path global=l2	]	(center1) -- ($(center1)+(-10,14)$);
		\path [name path global=l3	]	(center1) -- ($(center1)+(-12,0)$);
		\path [name path global=l4	]	(center1) -- ($(center1)+(-14,-4)$);
		\path [name path global=l5	]	(center1) -- ($(center1)+(12,0)$);
		\path [name path global=l6	]	(center1) -- ($(center1)+(-12,3.5)$);
		\path [name path global=l7	]	(center1) -- ($(center1)+(12,2)$);
		\path [name path global=l1-	]	(center1) -- ($(center1)+(6,11.9)$);
		\path [name path global=l2-	]	(center1) -- ($(center1)+(-10,14.1)$);
		\path [name path global=l3-	]	(center1) -- ($(center1)+(-12,.1)$);
		\path [name path global=l4-	]	(center1) -- ($(center1)+(-14,-3.9)$);
		\path [name path global=l5-	]	(center1) -- ($(center1)+(12,-.1)$);
		\path [name path global=l6-	]	(center1) -- ($(center1)+(-12,3.6)$);
		\path [name path global=l7-	]	(center1) -- ($(center1)+(12,1.9)$);
		\path [name path global=l1+	]	(center1) -- ($(center1)+(6,12.1)$);
		\path [name path global=l2+	]	(center1) -- ($(center1)+(-10,13.9)$);
		\path [name path global=l3+	]	(center1) -- ($(center1)+(-12,-.1)$);
		\path [name path global=l4+	]	(center1) -- ($(center1)+(-14,-4.1)$);
		\path [name path global=l5+	]	(center1) -- ($(center1)+(12,.1)$);
		\path [name path global=l6+	]	(center1) -- ($(center1)+(-12,3.4)$);
		\path [name path global=l7+	]	(center1) -- ($(center1)+(12,2.1)$);
		\coordinate (R1-E) at ($(center1)+(12,0)$);
		\coordinate (R1-W) at ($(center1)-(12,0)$);
		\coordinate (R2-E) at ($(center2)+(9.6,0)$);
		\coordinate (R2-W) at ($(center2)-(9.6,0)$);
		\coordinate (R3-E) at ($(center3)+(7.2,0)$);
		\coordinate (R3-W) at ($(center3)-(7.2,0)$);
		% Schnitt von Radialen Strecken mit Kreisen
		\foreach \ln in {1,...,7} {
			\path [name intersections={of= G1 and l\ln}] coordinate (G1-\ln) at (intersection-1);
			\path [name intersections={of= G2 and l\ln}] coordinate (G2-\ln) at (intersection-1);
			\path [name intersections={of= G2 and l\ln-}] coordinate (G2-\ln-) at (intersection-1);
			\path [name intersections={of= G2 and l\ln+}] coordinate (G2-\ln+) at (intersection-1);
			\path [name intersections={of= R1 and l\ln}] coordinate (R1-\ln) at (intersection-1);
			\path [name intersections={of= R2 and l\ln}] coordinate (R2-\ln) at (intersection-1);
			\path [name path=G2orth] (G2-\ln) -- ($(G2-\ln-)!10cm!90:(G2-\ln+)$);
			\path [name path=R2orth] (R2-\ln) -- (center2);
			\path [name intersections={of= G3 and G2orth}] coordinate (G3-\ln) at (intersection-1);
			\path [name intersections={of= R3 and R2orth}] coordinate (R3-\ln) at (intersection-1);
			\path [name path=R3orth] ($(center3)+2*($(R3-\ln)-(center3)$)$) -- (center3);
			\path [name intersections={of= G2 and R2orth}] coordinate (R2-G2-\ln) at (intersection-1);
			\path [name intersections={of= G3 and R3orth}] coordinate (R3-G3-\ln) at (intersection-1);
		}
	\end{scope}
	\path [name path global=clip1] (-100,-4) -- (100,-4);
	\path [name path global=clip2] (-100,0) -- (100,0);
	\path [name path global=clip3] (0,0) -- (0,100);
	\path [name intersections={of=R1 and clip1}] coordinate (SW) at (intersection-1) coordinate (SE) at (intersection-2);
	\path [name intersections={of=R1 and clip2}] coordinate  (W) at (intersection-1) coordinate  (E) at (intersection-2);
	\path [name intersections={of=R1 and clip3}] coordinate (N)  at (intersection-1);
\end{pgfinterruptboundingbox}
\makeatletter
% #1: R oder G für Rund oder Graph-3.23) -- (100,-3.23); \path [name path = clip2] (-100,-4)
% #2: Wievielte Sphäre (1/2/3)
\NewDocumentCommand\drawasphere{smm}{%
	% Platz reservieren
	\IfBooleanTF{#1}{%
		\draw [thick,draw opacity=0] (SW) -- (SE) -- ($(E)+(0,.1)$) -- ($(E)-(0,.1)$)%
			-- ($(N)-(.1,0)$) -- ($(N)+(.1,0)$) -- ($(W)-(0,.1)$) -- ($(W)+(0,.1)$);%
	}\relax%
	\begin{pgfinterruptboundingbox}
		\begin{scope}
			\clip (-100,-3.25) -- (-100,100) -- (100,100) -- (100,-3.25);
			\draw [\csname #2color#3\endcsname,use path=#2#3,thick,\csname #2style#3\endcsname] (0,0);
			\ifx #2R\drawasphere@addtoround#3\else\ifx 1#3\drawasphere@addtoround1\fi\fi%
		\end{scope}
		\begin{scope}
			\clip (-100,-4) -- (-100,-3.23) -- (100,-3.23) -- (100,-4);
			\draw [\csname #2color#3\endcsname,use path=#2#3,thick,densely dotted] (0,0);
		\end{scope}
	\end{pgfinterruptboundingbox}
}
\gdef\drawasphere@addtoround#1{%
	\ifx#11\drawasphere@addtoround@ 1\varsigma{12}{-30}\else\ifx#12\drawasphere@addtoround@ 2\Hradius{9.6}{-30}\else\ifx#13 \drawasphere@addtoround@ 3s{7.2}{-30}\fi\fi\fi%
}
\gdef\drawasphere@addtoround@#1#2#3#4{%
	\draw [\csname Rcolor#1\endcsname,fill] (center#1) circle (\ptssize);%
	\coordinate (center-r) at ($(center#1)!#3!#4:($(center#1)-(1,0)$)$);%
	\draw [|<->|,black!75!white] (center#1) -- (center-r);%
	\draw [black!75!white] ($(center#1)!.4!(center-r)$) node [fill=white] () {\textcolor{black!75!white}{$#2$}};%
}
\makeatother
\NewDocumentCommand\drawspheres{s}{\IfBooleanTF{#1}{\drawasphere R1}{\drawasphere*R1} \drawasphere R2 \drawasphere R3}
\NewDocumentCommand\drawgraphs{s}{ \IfBooleanTF{#1}{\drawasphere G1}{\drawasphere*G1} \drawasphere G2 \drawasphere G3}
			\coordinate (FIG#-CUT-SW) at ($(SW -| R1-4)+(-.5,.5)$);
			\coordinate (FIG#-CUT-SE) at ($(SW -| G3-4)+(.5,.5)$);
			\coordinate (FIG#-CUT-NE) at ($(G3-4)+(.5,.5)$);
			\coordinate (FIG#-CUT-NW) at ($(G3-4 -| R1-4)+(-.5,.5)$);
			\draw [densely dotted,gray,thick] (FIG#-CUT-SW) -- (FIG#-CUT-SE) -- (FIG#-CUT-NE) -- (FIG#-CUT-NW) -- (FIG#-CUT-SW);
			1
			\drawspheres
			\drawgraphs*
			\foreach \ln in {1,...,5} {
				\draw [\colorlar,fill] (R1-\ln) circle (\ptssize);
				\draw [\coloralar,fill] (G1-\ln) circle (\ptssize);
				\draw [\colormid,fill] (R2-\ln) circle (\ptssize);
				\draw [\coloramid,fill] (G2-\ln) circle (\ptssize);
				\draw [\coloragraph,fill] (R2-G2-\ln) circle (\ptssize);
				\draw [\colorsma,fill] (R3-\ln) circle (\ptssize);
				\draw [\colorasma,fill] (G3-\ln) circle (\ptssize);
				\draw [\coloragraph,fill] (R3-G3-\ln) circle (\ptssize);
				\draw [|->,\colorrnu,\stylernu,\pointerth] (R1-\ln) -- (R2-\ln);
				\draw [|->,\colorrnu,\stylernu,\pointerth] (R2-\ln) -- (R3-\ln);
				\draw [|->,\colorarnu,\stylearnu,\pointerth] (G1-\ln) -- (G2-\ln);
				\draw [|->,\colorarnu,\stylearnu,\pointerth] (G2-\ln) -- (G3-\ln);
				\draw [|->,\colorgraph,\stylegraph,\pointerth] (R3-\ln) -- (R3-G3-\ln);
			}%
			\draw [dotted,gray,thick] (FIG#-CUT-SW) -- (FIG#-CUT-SE) -- (FIG#-CUT-NE) -- (FIG#-CUT-NW) -- (FIG#-CUT-SW);
		#2{%
			\coordinate (FIG#1-CUT-SW) at ($(SW -| R1-4)+(-.5,.5)$);
			\coordinate (FIG#1-CUT-SE) at ($(SW -| G3-4)+(.5,.5)$);
			\coordinate (FIG#1-CUT-NE) at ($(G3-4)+(.5,.5)$);
			\coordinate (FIG#1-CUT-NW) at ($(G3-4 -| R1-4)+(-.5,.5)$);
			\draw [densely dotted,gray,thick] (FIG#1-CUT-SW) -- (FIG#1-CUT-SE) -- (FIG#1-CUT-NE) -- (FIG#1-CUT-NW) -- (FIG#1-CUT-SW);
			#2
			\drawspheres
			\drawgraphs*
			\foreach \ln in {1,...,5} {
				\draw [\colorlar,fill] (R1-\ln) circle (\ptssize);
				\draw [\coloralar,fill] (G1-\ln) circle (\ptssize);
				\draw [\colormid,fill] (R2-\ln) circle (\ptssize);
				\draw [\coloramid,fill] (G2-\ln) circle (\ptssize);
				\draw [\coloragraph,fill] (R2-G2-\ln) circle (\ptssize);
				\draw [\colorsma,fill] (R3-\ln) circle (\ptssize);
				\draw [\colorasma,fill] (G3-\ln) circle (\ptssize);
				\draw [\coloragraph,fill] (R3-G3-\ln) circle (\ptssize);
				\draw [|->,\colorrnu,\stylernu,\pointerth] (R1-\ln) -- (R2-\ln);
				\draw [|->,\colorrnu,\stylernu,\pointerth] (R2-\ln) -- (R3-\ln);
				\draw [|->,\colorarnu,\stylearnu,\pointerth] (G1-\ln) -- (G2-\ln);
				\draw [|->,\colorarnu,\stylearnu,\pointerth] (G2-\ln) -- (G3-\ln);
				\draw [|->,\colorgraph,\stylegraph,\pointerth] (R3-\ln) -- (R3-G3-\ln);
			}%
			\draw [dotted,gray,thick] (FIG#1-CUT-SW) -- (FIG#1-CUT-SE) -- (FIG#1-CUT-NE) -- (FIG#1-CUT-NW) -- (FIG#1-CUT-SW);
		}
		\begin{scope}
			\coordinate (FIG1-CUT-SW) at ($(SW -| R1-4)+(-.5,.5)$);
			\coordinate (FIG1-CUT-SE) at ($(SW -| G3-4)+(.5,.5)$);
			\coordinate (FIG1-CUT-NE) at ($(G3-4)+(.5,.5)$);
			\coordinate (FIG1-CUT-NW) at ($(G3-4 -| R1-4)+(-.5,.5)$);
			\draw [densely dotted,gray,thick] (FIG1-CUT-SW) -- (FIG1-CUT-SE) -- (FIG1-CUT-NE) -- (FIG1-CUT-NW) -- (FIG1-CUT-SW);
			r
			\drawspheres
			\drawgraphs*
			\foreach \ln in {1,...,5} {
				\draw [\colorlar,fill] (R1-\ln) circle (\ptssize);
				\draw [\coloralar,fill] (G1-\ln) circle (\ptssize);
				\draw [\colormid,fill] (R2-\ln) circle (\ptssize);
				\draw [\coloramid,fill] (G2-\ln) circle (\ptssize);
				\draw [\coloragraph,fill] (R2-G2-\ln) circle (\ptssize);
				\draw [\colorsma,fill] (R3-\ln) circle (\ptssize);
				\draw [\colorasma,fill] (G3-\ln) circle (\ptssize);
				\draw [\coloragraph,fill] (R3-G3-\ln) circle (\ptssize);
				\draw [|->,\colorrnu,\stylernu,\pointerth] (R1-\ln) -- (R2-\ln);
				\draw [|->,\colorrnu,\stylernu,\pointerth] (R2-\ln) -- (R3-\ln);
				\draw [|->,\colorarnu,\stylearnu,\pointerth] (G1-\ln) -- (G2-\ln);
				\draw [|->,\colorarnu,\stylearnu,\pointerth] (G2-\ln) -- (G3-\ln);
				\draw [|->,\colorgraph,\stylegraph,\pointerth] (R3-\ln) -- (R3-G3-\ln);
			}%
			\draw [dotted,gray,thick] (FIG1-CUT-SW) -- (FIG1-CUT-SE) -- (FIG1-CUT-NE) -- (FIG1-CUT-NW) -- (FIG1-CUT-SW);
		\relax
		\end{scope}
		\def\magn{2.5}%
		\begin{scope}[scale=\magn,shift={($($(current bounding box.south)-(0,.25)$)-\magn*.5*($(FIG1-CUT-NW) + (FIG1-CUT-NE)$)$)}]
			\coordinate (FIG2-CUT-SW) at ($(SW -| R1-4)+(-.5,.5)$);
			\coordinate (FIG2-CUT-SE) at ($(SW -| G3-4)+(.5,.5)$);
			\coordinate (FIG2-CUT-NE) at ($(G3-4)+(.5,.5)$);
			\coordinate (FIG2-CUT-NW) at ($(G3-4 -| R1-4)+(-.5,.5)$);
			\draw [densely dotted,gray,thick] (FIG2-CUT-SW) -- (FIG2-CUT-SE) -- (FIG2-CUT-NE) -- (FIG2-CUT-NW) -- (FIG2-CUT-SW);
			\def\ptssize{.06} \clip (FIG2-CUT-SW) -- (FIG2-CUT-SE) -- (FIG2-CUT-NE) -- (FIG2-CUT-NW) -- (FIG2-CUT-SW);
			\drawspheres
			\drawgraphs*
			\foreach \ln in {1,...,5} {
				\draw [\colorlar,fill] (R1-\ln) circle (\ptssize);
				\draw [\coloralar,fill] (G1-\ln) circle (\ptssize);
				\draw [\colormid,fill] (R2-\ln) circle (\ptssize);
				\draw [\coloramid,fill] (G2-\ln) circle (\ptssize);
				\draw [\coloragraph,fill] (R2-G2-\ln) circle (\ptssize);
				\draw [\colorsma,fill] (R3-\ln) circle (\ptssize);
				\draw [\colorasma,fill] (G3-\ln) circle (\ptssize);
				\draw [\coloragraph,fill] (R3-G3-\ln) circle (\ptssize);
				\draw [|->,\colorrnu,\stylernu,\pointerth] (R1-\ln) -- (R2-\ln);
				\draw [|->,\colorrnu,\stylernu,\pointerth] (R2-\ln) -- (R3-\ln);
				\draw [|->,\colorarnu,\stylearnu,\pointerth] (G1-\ln) -- (G2-\ln);
				\draw [|->,\colorarnu,\stylearnu,\pointerth] (G2-\ln) -- (G3-\ln);
				\draw [|->,\colorgraph,\stylegraph,\pointerth] (R3-\ln) -- (R3-G3-\ln);
			}%
			\draw [dotted,gray,thick] (FIG2-CUT-SW) -- (FIG2-CUT-SE) -- (FIG2-CUT-NE) -- (FIG2-CUT-NW) -- (FIG2-CUT-SW);
		
		\end{scope}
		\begin{scope}
			\clip (current bounding box.north west) -- (current bounding box.north east) -- (current bounding box.south east) -- (current bounding box.south west)
					-- (FIG1-CUT-SW) -- (FIG1-CUT-SE) -- (FIG1-CUT-NE) -- (FIG1-CUT-NW);
			\clip (current bounding box.north east) -- (current bounding box.north west) -- (current bounding box.south west) -- (current bounding box.south east)
					-- (FIG2-CUT-SE) -- (FIG2-CUT-SW) -- (FIG2-CUT-NW) -- (FIG2-CUT-NE);
			\draw [dotted,gray,thick] (FIG1-CUT-SW) -- (FIG2-CUT-SW);
			\draw [dotted,gray,thick] (FIG1-CUT-SE) -- (FIG2-CUT-SE);
			\draw [dotted,gray,thick] (FIG1-CUT-NW) -- (FIG2-CUT-NW);
			\draw [dotted,gray,thick] (FIG1-CUT-NE) -- (FIG2-CUT-NE);
		\end{scope}
		\begin{scope}
			\clip (FIG1-CUT-SW) -- (FIG1-CUT-SE) -- (FIG1-CUT-NE) -- (FIG1-CUT-NW) -- (FIG1-CUT-SW) -- (FIG2-CUT-SW) -- (FIG2-CUT-SE) -- (FIG2-CUT-NE) -- (FIG2-CUT-NW) -- (FIG2-CUT-SW);
			\draw [loosely dotted,gray!50!white,thick] (FIG1-CUT-SW) -- (FIG2-CUT-SW);
			\draw [loosely dotted,gray!50!white,thick] (FIG1-CUT-SE) -- (FIG2-CUT-SE);
			\draw [loosely dotted,gray!50!white,thick] (FIG1-CUT-NW) -- (FIG2-CUT-NW);
			\draw [loosely dotted,gray!50!white,thick] (FIG1-CUT-NE) -- (FIG2-CUT-NE);
		\end{scope}
		\draw (R1-4)		node [left]	() {\textcolor{\colorlar}{$\roPhi(\varsigma,p)=\Phi(\varsigma,p)$}};
		\draw (R2-4)		node [below]	() {\textcolor{\colormid}{$\roPhi(\Hradius,p)$}};
		\draw (R3-4)		node [left]		() {\textcolor{\colorsma}{$\roPhi(s,p)$}};
		\draw (G2-4)		node [below]	() {\textcolor{\coloramid}{$\Phi(\Hradius,p)$}};
		\draw (G3-4)		node [below]	() {\textcolor{\colorasma}{$\Phi(s,p)$}};
		\draw (R2-G2-4)	node [left]		() {\textcolor{\coloragraph}{$\graphF<\Hradius>(p)$}};
		\draw (R3-G3-4)	node [right]	() {\textcolor{\coloragraph}{$\graphF<s>(p)$}};
		\begin{scope}
			\clip [use path = R1];
			\clip (R1-4) -- (R2-4) -- (R3-4) -- (center3) -- (current bounding box.north east) -- (current bounding box.north west) -- (current bounding box.west);
			\draw (R1-4) circle (.6); \draw [fill] ($(R1-4)!.06!45:(R2-4)$) circle (.03);
			\clip [use path = R2];
			\draw (R2-4) circle (.6); \draw [fill] ($(R2-4)!.05!45:(R3-4)$) circle (.03);
			\clip [use path = R3];
			\draw (R3-4) circle (.6); \draw [fill] ($(R3-4)!.2!45:(R3-G3-4)$) circle (.03);
		\end{scope}
		\begin{scope}
			\clip [use path = G2];
			\clip (R1-4) -- (G2-4) -- (G3-4) -- (current bounding box.south east) -- (current bounding box.south west) -- (current bounding box.west);
			\draw (G2-4) circle (.6); \draw [fill] ($(G2-4)!.06!-45:(G3-4)$) circle (.03);
		\end{scope}
	\end{tikzpicture}
 \par\noindent%
 \hbox{%
	\begin{tikzpicture}[scale=.75]\footnotesize
	 \begin{scope}
	  \clip (-.27,-.04) -- (-.27,-.18) -- (.27,-.18) -- (.27,-.04);
	  \draw [\coloralar,thick,densely dotted] (0,0) circle (.25);
	 \end{scope}
	  \clip (-.27,.27) -- (-.27,-.05) -- (.27,-.05) -- (.27,.27);
	  \draw [\coloralar,\stylealar,thick] (0,0) circle (.25);
	\end{tikzpicture}}\ :\
 $\Phi(\M<\varsigma>)=\hsphere^2_\varsigma(\centerz<\varsigma>)$\hspace*{1.8em}%
 \hbox{%
	\begin{tikzpicture}[scale=.75]\footnotesize
	 \begin{scope}
	  \clip (-.27,-.04) -- (-.27,-.18) -- (.27,-.18) -- (.27,-.04);
	  \draw [\coloramid,thick,densely dotted] (0,0) circle (.25);
	 \end{scope}
	  \clip (-.27,.27) -- (-.27,-.05) -- (.27,-.05) -- (.27,.27);
	  \draw [\coloramid,\styleamid,thick] (0,0) circle (.25);
	\end{tikzpicture}}\ :\
 $\Phi(\M<\Hradius>)$\hspace*{1.8em}
 \hbox{%
	\begin{tikzpicture}[scale=.75]\footnotesize
	 \begin{scope}
	  \clip (-.27,-.04) -- (-.27,-.18) -- (.27,-.18) -- (.27,-.04);
	  \draw [\colorasma,thick,densely dotted] (0,0) circle (.25);
	 \end{scope}
	  \clip (-.27,.27) -- (-.27,-.05) -- (.27,-.05) -- (.27,.27);
	  \draw [\colorasma,\styleasma,thick] (0,0) circle (.25);
	\end{tikzpicture}}\ :\
 $\Phi(\M<s>)$
 \smallskip\par\noindent
 \hbox{%
	\begin{tikzpicture}[scale=.75]\footnotesize
	 \begin{scope}
	  \clip (-.27,-.04) -- (-.27,-.18) -- (.27,-.18) -- (.27,-.04);
	  \draw [\colorlar,thick,densely dotted] (0,0) circle (.25);
	 \end{scope}
	  \clip (-.27,.27) -- (-.27,-.05) -- (.27,-.05) -- (.27,.27);
	  \draw [\colorlar,\stylelar,thick] (0,0) circle (.25);
	\end{tikzpicture}}\ :\
 $\roPhi(\M<\varsigma>)=\hsphere^2_\varsigma(\centerz<\varsigma>)$\hspace*{1.8em}
 \hbox{%
	\begin{tikzpicture}[scale=.75]\footnotesize
	 \begin{scope}
	  \clip (-.27,-.04) -- (-.27,-.18) -- (.27,-.18) -- (.27,-.04);
	  \draw [\colormid,thick,densely dotted] (0,0) circle (.25);
	 \end{scope}
	  \clip (-.27,.27) -- (-.27,-.05) -- (.27,-.05) -- (.27,.27);
	  \draw [\colormid,\stylemid,thick] (0,0) circle (.25);
	\end{tikzpicture}}\ :\
 $\roPhi(\M<\Hradius>)=\hsphere^2_\varsigma(\centerz<\Hradius>)$\hspace*{1.8em}%
 \hbox{%
	\begin{tikzpicture}[scale=.75]\footnotesize
	 \begin{scope}
	  \clip (-.27,-.04) -- (-.27,-.18) -- (.27,-.18) -- (.27,-.04);
	  \draw [\colorsma,thick,densely dotted] (0,0) circle (.25);
	 \end{scope}
	  \clip (-.27,.27) -- (-.27,-.05) -- (.27,-.05) -- (.27,.27);
	  \draw [\colorsma,\stylesma,thick] (0,0) circle (.25);
	\end{tikzpicture}}\ :\
 $\roPhi(\M<s>)=\hsphere^2_s(\centerz<s>)$%
 \smallskip\par\noindent
 \raisebox{.25em}{\hbox{%
	\begin{tikzpicture}[scale=.75]\footnotesize
	  \draw [|->,\colorrnu,\stylernu,thick] (0,0) -- (1,0);
	\end{tikzpicture}\vspace*{.5em}}}\ :\
	$\partial*_\Hradius\roPhi=(1{\,+\,}\roboost{\rnu})\,\ronu$\hspace*{1.7em}
 \raisebox{.25em}{\hbox{%
	\begin{tikzpicture}[scale=.75]\footnotesize
	  \draw [|->,\colorarnu,\stylearnu,thick] (0,0) -- (1,0);
	\end{tikzpicture}\vspace*{.5em}}}\ :\
	$\partial*_\Hradius\Phi=\rnu\,\hnu$\hspace*{1.7em}
 \raisebox{.25em}{\hbox{%
	\begin{tikzpicture}[scale=.75]\footnotesize
	  \draw [|->,\colorgraph,\stylegraph,thick] (0,0) -- (1,0);
	\end{tikzpicture}\vspace*{.5em}}}\ :\
	graph function $\graphf$%}%
 [Images of the CMC-leaves and the lapse function]%
 {$\Phi$- and $\roPhi$-images of the CMC-leaves and the lapse functions}%
	\emph{Construction:} Redo the two constructions done in Figure~\ref{Image__round} \vpageref[above]{Image__round} and in Figure~\ref{Image__non-round} \vpageref[above]{Image__non-round}. From $\roPhi(\M<\Hradius>)$ and $\roPhi(\M<s>)$, a geodesic in direction $\hnu<\Hradius>$ (or $\hnu<s>$) will after some distance intersect $\Phi(\M<
	\Hradius>)$ (or $\Phi(\M<s>)$) which gives us the graph function $\graphf$. \smallskip\par
	\emph{Note:} $\Phi(s,\sphere^2)$ is the graph of $\graphF<s>$ above $\roPhi(s,\sphere^2)$, but $\Phi(s,\emptyarg)$ and $\graphF<s>\circ\roPhi(s,\emptyarg)$ can differ by a non-trivial diffeomorphism of $\Phi(s,\sphere^2)$ as shown in the picture---and identical for $\Hradius$ instead of $s$.
\end{Figure}%
Using \cite[Thm~A.2]{nerz2016HBCMCExistence}, this will give us very sharp estimates on the graph function~$\graphf$ and the second fundamental form of $\Phi(\M<\Hradius>)\hookrightarrow\hyperbolicspace$. Then we prove that this leads to equally good estimates on the graph map $\graphF$ (see above)---note that $\graphF\circ\roPhi$ and $\Phi$ differ by a diffeomorphism of $\Phi(\M<\Hradius>)$, see Figure~\ref{Image__with-graph} \vpageref[above]{Image__with-graph}. Now, the results of step one imply that the metric $\g<\Hradius>$ of $\M<\Hradius>$ is quite close to the pullback $\hg<\Hradius>$ along $\Phi(\Hradius,\emptyarg)$ of the metric induced on $\Phi(\M<\Hradius>)$ by~$\houtg$. An integration of the mentioned \lq very sharp\rq\ control on the second fundamental form, then implies the necessary, even stronger estimates $\outg-\houtg=\mathcal O(\exp({-}\decay\Hradius))$ and $\houtlevi\outg=\mathcal O(\exp((1-\decay)\Hradius))$ on the error of the metrics. This finishes the second step.

\paragraph{The third step}
Now, let us now introduce the $\varsigma$-dependency in the notation and write $\roPhi[\varsigma]$, $\Phi[\varsigma]$, and $\centerz[\varsigma]<\Hradius>$ for the quantities $\roPhi$, $\Phi$, and $\centerz<\Hradius>$ explained above. In the third step, we first prove that the center $\centerz<\Hradius>[\varsigma]$ of a fixed surfaces $\M<\Hradius>$ converges to some center point $\centerz<\Hradius>[\infty]$ as $\varsigma\to\infty$ if we choose simple initial data $\varphi<\varsigma>:\sphere^2_\varsigma(0)\to\M<\varsigma>$ for the charts $\roPhi$ and $\Phi$.

Then, we use this to choose the \lq better\rq\ initial data $\varphi<\varsigma>:\sphere^2_\varsigma(\centerz[\infty]<\varsigma>)\to\M<\varsigma>$ for $\roPhi$ and conclude that then the center points $\centerz[\varsigma]<\Hradius>$ are independent of the chosen large initial mean curvature radius $\varsigma$. Therefore, the two round charts $\roPhi[\varsigma]$ and $\roPhi[\varsigma']$ to two initial mean curvature radii $\varsigma$ and $\varsigma'$ differ only by an one-parameter family of reparametrizations of $\sphere^2$. Removing rotations, we can prove that we can take the limit of $\roPhi[\varsigma](\Hradius,\emptyarg)$ as $\varsigma\to\infty$.

By the estimates on the graph function (see step two), a simple compactness argument proves that $\Phi[\varsigma_n](\Hradius,\emptyarg)$ converges for some sequence $\varsigma_n\to\infty$ and a fixed $\Hradius$. Per construction of $\Phi[\varsigma]$ this proves that it converges for every $\Hradius$ proving the existence of a second (better) global chart.

\paragraph{The forth step}
Finally, we use the assumed decay behavior of $\outric-\houtric=\mathcal O(\exp({-}\decay\Hradius))$ and the already proven ones $\outg-\houtg=\mathcal O(\exp({-}\decay\Hradius))$ and $\houtlevi\outg=\mathcal O(\exp((1-\decay)\Hradius))$ to conclude $\hlaplace(\outg-\houtg)=\mathcal O(\exp({-}\decay\Hradius))$. The regularity of the Laplace operator now proves the theorem.

\begin{proof}<MainTheorem>
To shorten notation, we define the following error terms
\[ \masserr(\Hradius) := C\vert(\masstd<\Hradius>^\oi)_{\oi=1}^3\vert_{\R^3} + C\,\exp({-}\frac\outve2\Hradius), \qquad
	\Masserr(\Hradius) := \int_\Hradius^\infty m(s)\d s + C\,\exp({-}\frac\outve2\Hradius), \]
where the constants $C$ will change from line to line and depend on~$\decay$, $\scdecay$, $\vert\mass\vert$, and~$\eta$, but not on $\varsigma$. We note that we already know
\[ \Vert\rnu-1\Vert_{\Lp^\infty(\M<\Hradius>)} \le \masserr(\Hradius), \qquad
	\masserr**\in\Lp^1(\vphantom{\big|}\interval{\Hradius_0}\infty), \qquad
		\Masserr(\Hradius)\xrightarrow{\Hradius\to\infty} 0. \]
Furthermore, we note that a~posteriori $\masserr(\Hradius)$ (and therefore $\Masserr(\Hradius)$) decays like $\exp({-}\outve\Hradius)$ as $\masstd<\Hradius>$ converges at this order, see \cite{herzlich2015computing}.
 
Let $(\varsigma,\varphi<\varsigma>,\centerz<\varsigma>)$ be some finite initial data, \ie $\varsigma>\Hradius_0$, $\varphi<\varsigma>:\sphere^2_\varsigma(\centerz<\varsigma>)\to\M<\varsigma>$, and $\centerz<\varsigma>$ are some (large) mean curvature radius, a parametrization of the corresponding leaf as in \refth{Regularity_Surfaces}, and some point in the hyperbolicspace. Everything will depend on the chosen finite initial data, but to simplify notation (at least a little bit), we suppress this dependency in the following.
\smallskip

\paragraph{The round map}
As first step, we map each leaf to an exact geodesic sphere in the hyperbolic space by only looking at the boosting part of the lapse function. As we will later construct a chart mapping each leaf to a deformation of these spheres and need to use both at the same time, we use the upperindex $\roundr$ for the former one:
Let $\roPhi:\interval{\Hradius_0}\infty\times\sphere^2\to\hyperbolicspace$ denote the unique smooth map with
\[ \partial[\Hradius]@{\roPhi} = (1+\roboost<\Hradius>{(\rnu<\Hradius>\circ\roPhi<\Hradius>^{{-}1})})\,\ronu<\Hradius>, \qquad \roPhi<\varsigma>=\varphi<\varsigma>^{{-}1}, \]
where $\roPhi<\Hradius>:\sphere^2\to\roPhi(\Hradius,\sphere^2)$, $\ronu<\Hradius>$, and~$\roboost<\Hradius>{(\,{\cdot}\,)}$ denote the diffeomorphism $\roPhi<\Hradius>:=\roPhi(\Hradius,\,{\cdot}\,)$, the outer unit normal of $\roPhi<\Hradius>(\sphere^2)$, and the boosting part of a function with respect to the metric induced on $\roPhi<\Hradius>(\sphere^2)$ by the surrounding hyperbolic metric. By \refth{Boosting_deformations}, we know $\roPhi<\Hradius>(\M<\Hradius>)=\hsphere^2_\Hradius(\centerz<\Hradius>\hspace{.05em})$ for some $\centerz<\Hradius>\in\hyperbolicspace$ and every $\Hradius\in\interval\Hradius_1\infty$. 

Now, let us prove that the metric $\rog<\Hradius>:=\pullback{\roPhi<\Hradius>}\houtg$ is quite close to $\g<\Hradius>$. We assume therefore that for some fixed constants $d>0$ and $p>2$ the radius $\Hradius_1^d\in\interval*\Hradius_0*\varsigma$ is minimal and $\Hradius_2^d\in\interval*\varsigma\infty$ is maximal such that
\begin{equation*}\labeleq{graphf_assumption_round}
 \Vert\g<\Hradius>-\rog<\Hradius>\Vert_{\Wkp^{1,p}(\M<\Hradius>)} \le d\,\exp((\frac2p-\frac12-\outve)\Hradius) + \c\,\exp(\frac2p\Hradius-(\frac12+\outve)\varsigma) \qquad\forall\,\Hradius\in\interval{\Hradius_1^d}{\Hradius_2^d},
\end{equation*}
where $\c$ denotes \emph{twice} the constant $C$ from \refth{Regularity_Surfaces}. Note that the continuity of $\roPhi$ implies that the same is true for $\Hradius=\Hradius_1^d$ and $\Hradius=\Hradius_2^d$ if $\Hradius_1^d\neq\Hradius_0$ and $\Hradius_2^d\neq\infty$, respectively. Furthermore, it implies that $\Hradius_2^d>\varsigma>\Hradius_1^d$ for every $d>0$. We again suppress the index $\varsigma$ and $\Hradius\in\interval*{\Hradius_1^d}*{\Hradius_2^d}$ with $\Hradius_0<\Hradius<\infty$.

In particular, we know
\begin{equation*}
 \Vert\roboost{\rnu} - \boost{\rnu}\Vert_{\Wkp^{2,p}(\M)}
 \le C\,((d+1)\,\exp((\frac2p-\frac12-\outve)\Hradius) + \exp(\frac2p\Hradius-(\frac12+\outve)\varsigma))\masserr*,
\end{equation*}
where we used $\Vert\rnu-1\Vert_{\Wkp^{2,p}(\M)}\le \masserr*$ and $\boost[\mathcal h]1\equiv0$ for every metric $\mathcal h$. By \refth{second_fundamental_form}, we furthermore know
\[ \Vert\zFund-\frac12\roH\:\g\Vert_{\Lp^p(\M)} + \Vert\rolevi\zFund\Vert_{\Lp^p(\M)} \le C\,\exp((\frac2p-\decay)\Hradius), \]
where we used $\roH\equiv\H$ for the mean curvature $\roH$ of $\hsphere^2_\Hradius(\centerz<\Hradius>)$.
As $\roPhi(\Hradius,\sphere^2)$ is a geodesic sphere, we furthermore know $\rozFundtrf\equiv0$. Combined this gives
\[ \Vert\partial[\Hradius]@{\:\rog<\Hradius>'}-\partial[\Hradius]@{\:\g<\Hradius>'}\Vert_{\Wkp^{1,p}(\M<\Hradius>)}
	\le (d\masserr*+C)\,\exp((\frac2p-\frac52-\outve)\Hradius) + \masserr*\exp((\frac2p-2)\Hradius-(\frac12+\outve)\varsigma), \]
where $\rog<\Hradius>':=\sinh(\Hradius)^{{-}2}\rog<\Hradius>$ and $\g<\Hradius>':=\sinh(\Hradius)^{{-}2}\g<\Hradius>$ again denote the rescaled metrics. Thus, an integration proves
\begin{equation*}
 \Vert\g<\Hradius>'-\rog<\Hradius>'\Vert_{\Wkp^{1,p}(\M<\Hradius>)}
 \le \left\{\begin{aligned}
		(d\Masserr*+C_0)\,\exp((\frac2p-\frac52-\outve)\Hradius)
			+ \c\exp((\frac2p-2)\Hradius-(\frac12+\outve)\varsigma)
			&:& \Hradius<\varsigma \\
		(d\Masserr(\varsigma)+C_0)\,\exp((\frac2p-\frac52-\outve)\Hradius)
			+ \c\exp((\frac2p-2)\Hradius-(\frac12+\outve)\varsigma)
			&:& \Hradius>\varsigma
		\end{aligned}\right.
\end{equation*}
for some constant $C_0$ independent of $\varsigma$ and $d$, where we used that for $\Hradius=\varsigma$ the inequality \eqref{graphf_assumption_round} is true for $0$ and $\frac\c2$ instead of $d$ and $\c$, respectively.

For $d=4C_0$ and assuming that $\Hradius_0$ is so large that $\Masserr(\Hradius)<\frac14$ for every $\Hradius>\Hradius_0$, this proves
\begin{align*}\labeleq{graphf_assumption_round_2}
 \Vert\g<\Hradius>-\rog<\Hradius>\Vert_{\Wkp^{1,p}(\M<\Hradius>)}
 \le 2C_0\,\exp((\frac2p-\frac12-\outve)\Hradius) + \c\,\exp(\frac2p\Hradius-(\frac12+\outve)\varsigma) \quad\forall\,\Hradius\in\interval{\Hradius_1^{4C_0}}{\Hradius_2^{4C_0}},\,d\ge4C_0
\end{align*}
where we recall that $C_0$ is independent of $\varsigma$. In partcular, $\Hradius_1^{2C_0}\le\Hradius_1^{4C_0}$ and $\Hradius_2^{2C_0}\ge\Hradius_2^{4C_0}$. However by the continuity of $\roPhi$, we know $\Hradius_1^{4C_0}<\Hradius_1^{2C_0}$ and $\Hradius_2^{4C_0}>\Hradius_2^{2C_0}$ or $\Hradius_1^{2C_0}=\Hradius_0$ and $\Hradius_2^{2C_0}=\infty$, respectively see~\eqref{graphf_assumption_round}. Thus, we have $\Hradius_1^{4C_0}=\Hradius_0$ and $\Hradius_2^{4C_0}=\infty$. Combined this proves the existence of some constants $\Hradius_0$ and $C$ which are independent of $\varsigma$ and satisfy
\begin{equation*}\labeleq{graphf_imp_round}
 \Vert\g<\Hradius>-\rog<\Hradius>\Vert_{\Wkp^{1,p}(\M<\Hradius>)} \le C\,\exp((\frac2p-\frac12-\outve)\Hradius) + C\,\exp(\frac2p\Hradius-(\frac12+\outve)\varsigma) \qquad\forall\,\Hradius\in\interval\Hradius_0\infty
\end{equation*}

\paragraph{The real, finite parametrization}
Let $\Phi[\varsigma]:\interval\Hradius_0*\varsigma\times\sphere^2\to\hyperbolicspace$ be the unique injective smooth map with $\partial*_\Hradius\Phi[\varsigma]=\rnu\,\nu$ and $\Phi[\varsigma](\varsigma,\emptyarg)=\varphi<\varsigma>^{{-}1}$. We denote by $\hg[\varsigma]<\Hradius>$, $\hzFund[\varsigma]<\Hradius>$ \etc the pullback of the metric, second fundamental form, \etc of $\Phi[\varsigma](\Hradius,\sphere^2)\hookrightarrow(\hyperbolicspace,\houtg*)$ along $\Phi[\varsigma](\Hradius,\,{\cdot}\,)$. Again, we suppress the index~$\varsigma$ in the following.

Fix some constants $p>2$ and $d>0$ and let $\Hradius_1^d\in\interval\Hradius_0*\varsigma$ be the minimal mean curvature radius such that for every $\Hradius\in\interval{\Hradius_1^d}*\varsigma$ there exists a function $\graphf<\Hradius>\in\Wkp^{2,p}(\sphere^2_\Hradius(\centerz<\Hradius>))$ such that $\M=\graph\hspace{.05em}\graphf<\Hradius>$ and
\begin{equation*}\labeleq{graphf_assumption}
 \Vert\houtexp(\vphantom{\big|}\graphf<\Hradius>\,\nu)\circ\roPhi<\Hradius> - \Phi<\Hradius>\Vert_{\Wkp^{2,p}(\hsphere^2_\Hradius(\centerz))} \le d\exp((\frac2p-\frac12-\outve)\Hradius) + \c\,\exp({-}(\frac12+\outve)\varsigma+\frac2p\Hradius),
\end{equation*}
where $\c$ the constant $C$ from \refth{Regularity_Surfaces}. Exactly as in the first step, \eqref{graphf_assumption} holds for all $\Hradius$ up to $\varsigma$, \ie $\Hradius_1^d=\Hradius_0$ for some $\varsigma$-independent constant $d$, if there is a $\varsigma$-independent constant $d$ with the following property: if \eqref{graphf_assumption} holds for all $\Hradius\in\interval{\Hradius_1^{d'}}\varsigma$ and some $d'\in\interval0{2d}$, then \eqref{graphf_assumption} holds for all $\Hradius\in\interval{\Hradius_1^{d'}}\varsigma$ and the given $d$, too.

As first step, we note that \eqref{graphf_imp_round} and \eqref{graphf_assumption} combined imply
\begin{align*}
 \Vert\g<\Hradius>-\hg<\Hradius>\Vert_{\Wkp^{1,p}(\M<\Hradius>)} + \Vert\rog<\Hradius>-\hg<\Hradius>\Vert_{\Wkp^{1,p}(\M<\Hradius>)}
  \le{}& C(d+1)\,\exp((\frac2p-\frac12-\outve)\Hradius) + \c\exp(\frac2p\Hradius-(\frac12+\outve)\varsigma), \\
 \Vert\nu-\partial*_{{}_\Hradius r}\Vert_{\Wkp^{1,p}(\Phi(\M))}
  \le{}& C\exp((\frac2p-1)\Hradius)(d\exp({-}(\frac12+\outve)\Hradius) + \c\,\exp({-}(\frac12+\outve)\varsigma))
\end{align*}
for $\Hradius\in\interval{\Hradius_1^d}*\varsigma$, where $\partial*_{{}_\Hradius r}:=\houtlevi(\hdist(\centerz<\Hradius>,\,{\cdot}\,))$ denotes the radial vectorfield to the center point $\centerz<\Hradius>$. Therefore,
\begin{align*}
 \Vert\partial[\Hradius]@{\graphf<\Hradius>}\Vert_{\Wkp^{1,p}(\M<\Hradius>)}
 \le{}& C\,\Vert\rodeform\rnu-1\Vert_{\Wkp^{1,p}(\M<\Hradius>)}
	+ C\,\Vert\roboost\rnu\Vert_{\Wkp^{1,\infty}(\M<\Hradius>)}\,\Vert\partial*_r-\nu\Vert_{\Wkp^{1,p}(\M<\Hradius>)} \\
 \le{}& (\vphantom{\big|}d\masserr*+C)\,\exp((\frac2p-\frac12-\outve)\Hradius) + \masserr*\,\exp({-}(\frac12+\outve)\varsigma+\frac2p\Hradius).
\end{align*}
Therefore, an integration gives 
\begin{align*}\labeleq{graphf_imp_1}\hspace{5em}&\hspace{-5em}
 \Vert\houtexp(\vphantom{\big|}\graphf\,\nu)\circ\roPhi<\Hradius> - \Phi<\Hradius>\Vert_{\Wkp^{1,p}(\hsphere^2_\Hradius(\centerz))} \\
	\le{}& \exp(\frac2p\Hradius)((d\Masserr*+C)\exp({-}(\frac12+\outve)\Hradius) + \exp({-}(\frac12+\outve)\varsigma)\Masserr*).
\end{align*}
Note that we here control the graph function only up to the \emph{first} (and not second) derivative.

Furthermore, \eqref{graphf_assumption} implies
\begin{align*}
 \Vert\hlaplace(\rnu-1)+\frac{2(\rnu-1)}{\sinh(\Hradius)^2}\Vert_{\Lp^p(\M)}
 \le{}& (d\masserr*+C)\exp((\frac2p-\frac52-\outve)\Hradius) + \masserr*\,\exp({-}(\frac12+\outve)\varsigma).
\end{align*}
and
\[ \Vert(\hH^2-\H^2)(\rnu-1)\Vert_{\Lp^q(\M)}
		\le (d\masserr*+C)\exp((\frac2q-\frac52-\outve)\Hradius) + \masserr*\,\exp({-}2\Hradius-(\frac12+\outve)\varsigma). \]
Combined this gives
\[ \Vert\hjacobiext(\rnu-1)\Vert_{\Lp^q(\M)} \le (d\masserr*+C)\exp((\frac2q-\frac52-\outve)\Hradius) + \masserr*\,\exp({-}2\Hradius-(\frac12+\outve)\varsigma). \]
In particular, we have
\[ \Vert\partial[\Hradius]@{\hH}-\frac{\hH^2-4}2\Vert_{\Lp^p(\M)}
		\le (d\masserr*+C)\exp((\frac2p-\frac52-\outve)\Hradius) + \masserr*\,\exp({-}2\Hradius-(\frac12+\outve)\varsigma) \]
and per definition $\hH<\varsigma>\equiv{-}2\frac{\cosh(\varsigma)}{\sinh(\varsigma)}$. Solving this ordinary differential (in-)equality, we get
\begin{equation*}\labeleq{graphf_impication_mc}
\Vert\hH+2\frac{\cosh(\Hradius)}{\sinh(\Hradius)}\Vert_{\Lp^p(\M<\Hradius>)}
		\le \exp((\frac2p-2)\Hradius)((d\Masserr*+C)\exp({-}(\frac12+\outve)\Hradius) + \Masserr*\exp({-}(\frac12+\outve)\varsigma))
\end{equation*}
for every $\Hradius\in\interval{\Hradius_1^d}*\varsigma$, \ie if $d$ is large, then the mean curvature is closer to a constant as we a~priori assumed. As in \cite[Proof of Thm~A.2, Step~2]{nerz2016HBCMCExistence}---using~\eqref{graphf_assumption}---, this implies that there exists a function $\graphf<\Hradius>'\in\Ck^\infty(\hsphere^2_\Hradius(\centerz<\Hradius>)$ with $\rolaplace\graphf<\Hradius>'=\sinh(\Hradius)^{{-}2}(1-\exp(2\graphf<\Hradius>'))$ such that
\[ \Vert\graphf<\Hradius>-\graphf<\Hradius>'\Vert_{\Wkp^{2,p}(\hsphere^2_\Hradius(\centerz<\Hradius>))} \le \exp(\frac2p\Hradius)((d\Masserr*+C)\exp({-}(\frac12+\outve)\Hradius) + \Masserr*\exp({-}(\frac12+\outve)\varsigma)) \]
for every $\Hradius\in\interval{\Hradius_1^d}*\varsigma$. In combination with \eqref{graphf_imp_1}, the regularity of the Laplace operator gives
\begin{align*}
 \Vert\graphf<\Hradius>'\Vert_{\Wkp^{2,p}(\hsphere^2_\Hradius(\centerz<\Hradius>))}
 \le{}& C\,\Vert\graphf<\Hradius>'\Vert_{\Wkp^{1,p}(\hsphere^2_\Hradius(\centerz<\Hradius>))} \\
 \le{}& \exp((\frac2p-2)\Hradius)((d\Masserr*+C)\exp({-}(\frac12+\outve)\Hradius) + \Masserr*\exp({-}(\frac12+\outve)\varsigma))
\end{align*}
for all $\Hradius\in\interval{\Hradius_1^d}*\varsigma$. All in all, this proves
\begin{equation*}\labeleq{graphf_imp}
 \Vert\houtexp(\vphantom{\big|}\graphf\,\nu)\circ\roPhi<\Hradius> - \Phi<\Hradius>\Vert_{\Wkp^{2,p}(\hsphere^2_\Hradius(\centerz))} \le (\frac d4+C)\exp((\frac2p-\frac12-\outve)\Hradius) + \c\,\exp(\frac2p\Hradius-(\frac12+\outve)\varsigma)
\end{equation*}
for these $\Hradius$ if $\Hradius_0$ is sufficiently large. As explained above \eqref{graphf_imp} is true for all $\Hradius\in\interval\Hradius_0*\varsigma$ and $d=C$ (for the constant $C$ in~\eqref{graphf_imp}) independent of $\varsigma$.

\paragraph{Taking the limit}
We see that~\eqref{non_round_map} implies that $\outM$ is $\Ck^0_{\decay,\scdecay}$- and \Ckah^1_{\decay-1,\scdecay} if we can take the limit $\varsigma\to\infty$, \ie if $\Phi[\varsigma]:\interval\Hradius_0\Hradius_1\times\sphere^2\to\hyperbolicspace$ converges for every fixed $\Hradius_1$ to some limit map $\Phi[\infty]:\interval\Hradius_0\Hradius_1\times\sphere^2\to\hyperbolicspace$. Per construction of $\Phi$, this is true if $\Phi[\varsigma]<\Hradius>:\M<\Hradius>\to\hyperbolicspace$ converges for some fixed $s$ to some limit map $\Phi[\infty]<\Hradius>:\M<\Hradius>\to\hyperbolicspace$. By~\eqref{graphf_assumption} and the compactness of the Sobolev embeddings, this is true as $\Ck^1$-map for a subsequence of $\varsigma\to\infty$ if $\roPhi[\varsigma]<\Hradius>$ converges as $\varsigma\to\infty$. As \eqref{graphf_assumption} remains true in this limit, the proves the claim if a subsequence of $\roPhi[\varsigma]$ converges as $\varsigma\to\infty$. Thus, we only have to prove that $\roPhi[\varsigma]<\Hradius>:\M<\Hradius>\to\hyperbolicspace$ converges for one fixed $\Hradius>\Hradius_0$ as $\varsigma\to\infty$.

As first step, we prove that its image $\roPhi[\varsigma]<\Hradius>(\M<\Hradius>)$ is $\varsigma$-independent. A straightforward computation shows
\[ \partial[s]<s=\Hradius>(\hdist(\centerz[\varsigma]<\Hradius>,\centerz[\varsigma]<s>))
 = \Vert\roboost{(\rnu\circ\roPhi^{{-}1})}\Vert_{\Lp^\infty(\M)}
 \le \Vert\rnu-1\Vert_{\Lp^\infty(\M)} \le \masserr*. \]
In particular, if we choose the one-parameter family $(\varsigma,\varphi<\varsigma>,\centerz[\varsigma]<\varsigma>=0)$ of finite initial data and look at the corresponding $\Phi[\varsigma]<\Hradius>:\M<\Hradius>\stackrel\sim\to\sphere^2_\Hradius(\centerz[\varsigma]<\Hradius>)$, then $\centerz[\varsigma]<\Hradius>$ converges with $\varsigma\to\infty$ to some $\centerz[\infty]<\Hradius>\in\hyperbolicspace$. Thus, if we look at the alternative initial data $(\varsigma,\varphi<\varsigma>,\centerz[\varsigma]<\varsigma>:=\centerz[\infty]<\Hradius>)$, then the centers $\centerz<\Hradius>:=\centerz[\varsigma]<\Hradius>$ are independent of $\varsigma$ and satisfy $\centerz<\Hradius>\to0$ as $\Hradius\to\infty$. This proves the first claim.

Now, we see that in the Poincar\'e disc model of the hyperbolic space
\[ \vert\partial[\Hradius]@{\,\roPhi[\varsigma]}\vert_{\eukoutg*}
		= \frac14\,(1-\rad@{\roPhi[\varsigma]}^2)^2\vert\partial[\Hradius]@{\,\roPhi[\varsigma]}\vert_{\houtg*}
		\le \masserr(\Hradius) \]
and therefore $\roPhi[\Hradius]<\varsigma>:\sphere^2\to\hyperbolicspace$ converges to some $\psi[\varsigma]:\sphere^2\to\euksphere^2_1(0)$ for $\Hradius\to\infty$ and the metric $\sinh(\Hradius)^{{-}2}\g[\varsigma]<\Hradius>$ converges to a metric of constant Gau\ss\ curvature. Therefore, we can apply a diffeomorphism $\psi[\varsigma]:\sphere^2\to\sphere^2$ such that if we replace the initial data $(\varsigma,\varphi[\varsigma],\centerz[\varsigma])$ with $(\varsigma,\varphi[\varsigma]\circ\psi[\varsigma]^{{-}1},\centerz[\varsigma])$, then $\sinh(\Hradius)^{{-}2}\g[\varsigma]<\Hradius>$ converges to the standard metric of the Euclidean sphere---by \refth{Local_Estimates_rnu} and \refth{Ricci_mass_well-defined}, \eqref{graphf_assumption_round} remains true. Applying a rotation, we can furthermore assume that the limit $\psi[\varsigma]$ is independent of $\varsigma$. Per construction of $\roPhi$, this proves that $\roPhi[\varsigma]<\Hradius>$ converges as $\varsigma\to\infty$ for every $\Hradius>\Hradius_0$. As explained, this proves that $\Phi[\varsigma]$ converges to a chart of $\bigcup_\Hradius\M<\Hradius>$.

\paragraph{The asymptotic decay rate}
By \refth{second_fundamental_form} and \eqref{graphf_imp}, we know
\[ \Vert\zFundtrf\Vert_{\Lp^p(\M)} + \Vert\hH-\H\Vert_{\Lp^p(\M)} + \Vert\hzFundtrf\Vert_{\Lp^p(\M)} \le C\,\exp((\frac2p-\decay)\Hradius). \]
As $\partial*_\Hradius\,\hg<\Hradius>={-}2\rnu\zFund<\Hradius>$, an integration and the combination of~\eqref{graphf_imp_round} and~\eqref{graphf_imp} prove
\begin{equation*}\labeleq{graphf_imp_metric}\left.\begin{aligned}
 \Vert\g<\Hradius>-\hg<\Hradius>\Vert_{\Lp^p(\M)}
	\le{}& C\,\exp((\frac2p-\decay)\Hradius), \\
 \Vert\hlevi(\g<\Hradius>-\hg<\Hradius>)\Vert_{\Lp^p(\M<\Hradius>)}
	\le{}& C\,\exp((\frac2p-\frac32-\outve)\Hradius) 
 \end{aligned}\qquad\right\}\qquad\forall\,\Hradius\in\interval{\Hradius_0}\infty.
\end{equation*}
Furthermore, $\houtg(\partial*_\Hradius\Phi[\varsigma],\partial*_\Hradius\Phi[\varsigma])=\rnu^2=\houtg(\partial*_\Hradius,\partial*_\Hradius)$ and we therefore have proven
\begin{equation*}
 \int(\exp((\frac12+\outve')\rad)\vert\houtg-\outg\vert_{\houtg*}
		+ \exp((\frac32+\outve')\rad)\vert\houtlevi\outg\vert_{\houtg*})^p\exp({-}2\rad)\d\outmug
		\le C_{\outve'} \qquad\forall\,\outve'\in\interval0\outve
\end{equation*}
In particular, $\vert\outric-\houtric\vert_{\outg*}\le C\exp({-}(\frac52+\outve)\Hradius)$ implies
\begin{equation*}
 \int(\exp((\frac52+\outve')\rad)(\vert\houtlaplace\outg\vert_{\houtg*}+\vert\houtg-\outg\vert_{\houtg*}))^p\exp({-}2\rad)\d\outmug
		\le C_{\outve'} \qquad\forall\,\outve'\in\interval0\outve.
\end{equation*}
Thus, the regularity of the Laplace operator gives
\begin{equation*}
 \int(\exp((\frac52+\outve')\rad)(\vert\houtg-\outg\vert_{\houtg*}+\vert\houtlevi\outg\vert_{\houtg*}+\vert\houtlevi\houtlevi\outg\vert_{\houtg*}))^p\exp({-}2\rad)\d\outmug
		\le C.
\end{equation*}
Using the Sobolev inequality and reapplying the \emph{pointwise} inequality $\vert\outric-\houtric\vert_{\outg*}\le C\exp({-}(\frac52+\outve)\rad)$, we finally get
\begin{equation*}\labeleq{non_round_map}
 \ \sup_{\text{Im}\:\Phi}\lbrace\exp(\decay\rad)(\vert\pushforward{\Phi}\outg-\houtg\vert_{\houtg}+\vert\houtlevi(\pushforward{\Phi}\outg)\vert_{\houtg}+\vert\pushforward{\Phi}\outric-\houtric\vert) + \exp(\scdecay\rad)\vert\outsc+6\vert\rbrace
		\le C.
\end{equation*}
Note that we cannot get pointwise estimates on $\houtlevi\houtlevi(\pushforward\Phi\outg)$.

Here, $\vert\outsc+6\vert$ has to be replaced by $\vert(\outsc+6)^-\vert$ if we only assumed lower bounds on $\outsc+6$, see Remarks~\ref{BoundednessScalarCurvature_coordinates}, \ref{BoundednessScalarCurvature_spheres}, \ref{BoundednessScalarCurvature_stability}, \ref{BoundednessScalarCurvature_rnu}, and~\ref{BoundednessScalarCurvature_mass}.
\end{proof}

\begin{remark}[The Euclidean setting]
On the first glimpse, it may look like we can apply the same proof in the Euclidean setting and can therefore replace the proof done in \cite{nerz2015GeometricCharac}. However, the proof presented above crucial depends on the fact that $\roPhi[\varsigma]$ converges as $\varsigma$ goes to infinity which makes it necessary that $\centerz[\varsigma]<\Hradius>$ converges as $\varsigma\to\infty$. As explained, this is the case if (and only if) the $\Hradius$-derivative of $\centerz[\varsigma]<\Hradius>$ is integrable. However, this is equivalent to convergence of the so called CMC-center of mass. In the hyperbolic setting, this is always true as the CMC-center of mass is always well-defined, see \cite{cederbaum2015center,nerz2016HBCMCExistence}, but in the Euclidean setting this is \emph{not} always true, see \cite{cederbaumnerz2013_examples,nerz2015CMCfoliation}.
This explains why the author did in the Euclidean setting not choose the chart construction \lq by integration along the lapse function and fixing the chart at infinity\rq.

Equivalently, we can not apply the Euclidean construction in the hyperbolic setting as it heavily relies on the linear structure of the Euclidean space which is not given in the hyperbolic space---as can be seen by comparing translations with their hyperbolic counterparts being boosts.
\end{remark}

\section{Replacing the asymptotic end}\label{ReplacingAsymptoticEnd}
As toy application of the above construction, we replace the hyperbolic end of an asymptotically hyperbolic manifold with an Euclidean one.

\begin{proof}<ReplacingTheEnd>
Let $\mathcal M:=\{\M<\Hradius>\}_{\Hradius>\Hradius_0}$ be the CMC-foliation of $\outM$, $\rnu$ be the lapse function of $\mathcal M$, and
\[ \varphi(\Hradius):=6\:\frac{\cosh(\Hradius)^2}{\sinh(\Hradius)^2}\frac{\psi^2(\Hradius)-1}{\psi(\Hradius)^2}-\frac4{\rnu\psi^2}(\frac{\psi(\Hradius)^2-1}{\sinh(\Hradius)^2}-\frac{\psi'(\Hradius)\cosh(\Hradius)}{\psi(\Hradius)\sinh(\Hradius)}), \]
where $\psi\in\Ck^1(\interval0\infty)$ is an arbitrary monotone increasing function such that $\psi|_{\interval0{\Hradius_1'}}\equiv1$ and $\psi(\Hradius)=\cosh(\Hradius)$ for every $\Hradius\in\interval\Hradius_1\infty$ for some arbitrary $\Hradius_1>\Hradius_1'>\Hradius_0$.

Writing the metric in its components orthogonal to the foliation and tangential to it, we know
\[ \outg = \rnu^2\d\outHradius^2 + \g<\Hradius> \quad\text{on }\textrm T\outM|_{\M<\Hradius>}\qquad\forall\,\Hradius>\Hradius_0, \]
where the \lq radial\rq\ function $\outHradius$ is defined by $\outHradius|_{\M<\Hradius>}:=\Hradius$ for every $\Hradius>\Hradius_0$ and the lapse function $\rnu$ can be written as $\rnu=\vert\d\outHradius\vert^{{-}1}$. Now, we define the new metric
\begin{equation*}\labeleq{ReplacingTheEnd_construction} \foutg := \psi(\Hradius)^2\d\outHradius^2 + \g<\Hradius> \quad\text{on }\textrm T\outM|_{\M<\Hradius>}\qquad\forall\,\Hradius>\Hradius_0
\end{equation*}
and get
\begin{equation*}\labeleq{stretching_zFund}
 \fzFund<\Hradius> = \frac{{-}1}{2\frnu<\Hradius>}\lieD{{}^{\,\flatf\!}_\Hradius\!X}\foutg = \psi(\Hradius)^{{-}1}\frac{{-}1}{2\rnu<\Hradius>}\lieD{{}_\Hradius\!X}\outg = \psi(\Hradius)^{{-}1}\:\zFund<\Hradius>,
\end{equation*}
where ${}^\flatf X:=\frnu<\Hradius>\:\fnu<\Hradius>=\rnu<\Hradius>\:\nu<\Hradius>=:X$ and where $\frnu<\Hradius>:=\cosh(\Hradius)\:\rnu<\Hradius>$ denotes the lapse function of $\{\M<\varsigma>\}_\varsigma$ on $\M<\Hradius>$ with respect to $\foutg$ and where $\lieD{X}T$ denotes the Lie derivative of a tensor~$T$ in direction of a vector field~$X$. In particular, $\{\M<\Hradius>\}_\Hradius$ is a CMC-foliation with respect to $\foutg*$, too, where the \lq new\rq\ mean curvature satisfies
\[ \fH<\Hradius> := \fH(\M<\Hradius>) = \psi(\Hradius)^{{-}1}\H<\Hradius> \equiv \frac{{-}2\cosh(\Hradius)}{\psi(\Hradius)\sinh(\Hradius)} =: {-}\frac{2}{\varsigma} \]
Note that the metric $\fg<\Hradius>=\g<\Hradius>$ does not change for any of the CMC-leaves. This proves that the Hawking mass of any surfaces with respect to $\foutg$ is equal to its hyperbolic Hawking mass with respect to $\outg$. From now on, we suppress the index $\Hradius$ and write $\psi$, $\cosh$, and $\sinh$ instead of $\psi(\Hradius)$, $\cosh(\Hradius)$, and $\sinh(\Hradius)$, respectively.

The Codazzi equation and \eqref{stretching_zFund} imply
\[ \foutric(\fnu,\emptyarg) = \div(\fH\:\fg-\fzFund) = \psi^{{-}1}\:\div(\H\g-\zFund) = \psi^{{-}1}\:\outric(\nu,\emptyarg) \]
as $1$-form on any leaf of the foliation, where $\fnu$ and $\nu$ denote the out unit normal of $\M$ with respect to $\foutg$ and $\outg$, respectively. In particular, this proves the decay behavior for $\outric(\nu,\emptyarg)$ we aimed for.

Equation~\eqref{stretching_zFund} furthermore implies
\[ \partial[\Hradius]@{\:\fzFund} = \frac1{\psi}\partial[\Hradius]@{\:\zFund} - \frac{\psi'}{\psi^2}\zFund, \qquad
		\partial[\Hradius]@{\:\fH} = \frac1{\psi}\partial[\Hradius]@{\H} - \frac{\psi'}{\psi^2}\H \]
and by characterization of the stability operator, we know
\[ \frac1\frnu\partial[\Hradius]@{\:\fH} - \foutric(\fnu,\fnu) - \trtr\fzFund\fzFund = \frac{\laplace\frnu}\frnu = \frac{\laplace\rnu}\rnu = \frac1\rnu\partial[\Hradius]@\H - \outric(\nu,\nu) - \trtr\zFund\zFund. \]
Thus, \eqref{stretching_zFund} and $2\:\partial*_\Hradius\H=\H^2-4$ imply
\begin{align*}
 \foutric(\fnu,\fnu)
 ={}& \outric(\nu,\nu)
			- \frac1\rnu\frac{\psi^2-1}{\psi^2}\partial[\Hradius]@\H - \frac{\H\psi'}{\rnu\psi^3}
			+ \frac{\psi^2-1}{\psi^2}\frac{\H^2}2 + \frac{\psi^2-1}{\psi^2}\trtr\zFundtrf\zFundtrf \\
 ={}& \outric(\nu,\nu)
			+ 2\:\frac{\psi^2-1}{\psi^2}\frac{\cosh^2}{\sinh^2}
			+ \frac{\psi^2-1}{\psi^2}\trtr\zFundtrf\zFundtrf \\
		&	- \frac2{\rnu\:\psi^2}(\frac{\psi^2-1}{\sinh^2} - \frac{\psi'\cosh}{\psi\sinh}).
\end{align*}
We note that for $\psi=\cosh$, \ie for large $\Hradius$, this simplifies to
\[ \foutric(\fnu,\fnu) = \outric(\nu,\nu) + 2	+ \frac{\sinh^2}{\cosh^2}\trtr\zFundtrf\zFundtrf. \]
In particular, this proves
\[ \vert\foutric(\fnu,\fnu)\vert \le C\,\sinh(\Hradius)^{{-}\decay} \]
and therefore we have the correct decay behavior for $\foutric(\fnu,\fnu)$, too.

On $T\M^2$, we have
\[ \ric - \frac{\Hess\:\rnu}\rnu
	= \outric + \frac12\H^2\g - 2\trzd\zFundtrf\zFundtrf - \frac1{2\rnu}\partial[\Hradius]@\H\g - \frac1{\rnu}\partial[\Hradius]@\zFundtrf \]
(and equivalent for ${}^\flatf\emptyarg$)
and therefore a calculation as for $\foutric(\fnu,\fnu)$ gives
\begin{align*}
 \foutric
 ={}& \outric + 2\:\frac{\psi^2-1}{\psi^2}\frac{\cosh^2}{\sinh^2}\g - \frac1{\rnu\:\psi^2}(\frac{\psi^2-1}{\sinh^2}-\frac{\psi'\cosh}{\psi\sinh})\g\\
		&- \frac{\psi^2-1}{\psi^2}(2\:\trzd\zFundtrf\zFundtrf+\frac1\rnu\partial[\Hradius]@\zFundtrf) - \frac{\psi'}{\rnu\psi^3}\zFundtrf.
\end{align*}
In particular, for $\psi=\cosh$
\[ \foutric = \outric + 2\g - \frac{\sinh^2}{\cosh^2}(2\:\trzd\zFundtrf\zFundtrf+\frac1\rnu\partial[\Hradius]@\zFundtrf + \frac{\zFundtrf}{\rnu\cosh\sinh}) \]
which implies the decay behavior of $\foutric|_{T\M^2}$ we aimed for. All in all this proves the decay behavior for $\foutric$.

By the above, we know
\[
 \foutsc
	= \outsc + 6\:\frac{\psi^2-1}{\psi^2}\frac{\cosh^2}{\sinh^2} - \frac4{\rnu\:\psi^2}(\frac{\psi^2-1}{\sinh^2}-\frac{\psi'\cosh}{\psi\sinh})
		 + \frac{\psi^2-1}{\psi^2}\trtr\zFundtrf\zFundtrf
\]
where we used $\tr(\partial[\Hradius]\zFundtrf)={-}\partial[\Hradius]@{\g^{\ii\ij}}\zFundtrf_{\ii\ij}={-}2\rnu\trtr\zFundtrf\zFundtrf$. Again for $\psi=\cosh$, this gives
\[ \foutsc = \outsc + 6 + \frac{\sinh^2}{\cosh^2}\trtr\zFundtrf\zFundtrf \ge \outsc + 6 \]
which gives the necessary decay estimate on $\foutsc$, too. By \cite{nerz2015GeometricCharac}, this proves all claims.
\end{proof}

\begin{proof}<HawkingMassSmallerTotalMass>
Fix any $\Hradius>\Hradius_1$ and choose a function $\psi\in\Ck^1(\interval0\infty)$ as in \refth{ReplacingTheEnd} with $\psi(s)=\cosh(s)$ for every $s>\frac{\Hradius_1+\Hradius}2$. By \refth{ReplacingTheEnd}, we can transform $\outg$ to an asymptotically Euclidean metric $\foutg$ such that the hyperbolic Hawking mass of $\M=\M<\Hradius>$ with respect to $\outg$ is the same as the Hawking mass of it with respect to $\foutg$. By the Gau\ss\ equation and \cite{herzlich2015computing}, we know
\[ \mass*_{\text{ADM}}(\outM,\foutg) \xleftarrow{\Hradius\to\infty} \mHaw(\M<\Hradius>,\foutg*) = \HmHaw(\M<\Hradius>,\outg*) \xrightarrow{\Hradius\to\infty} \mass*(\outM,\outg*) = \vert\mass\vert_{\R^{3,1}}. \]
Per definition, we know that $\M<\Hradius>$ has constant mean curvature ${-}\frac2\Hradius$ with respect to $\foutg*$. By \cite{bray1997mon}, $\mHaw(\M<\Hradius>,\foutg*)$ is therefore monotone increasing in $\Hradius$. All but the last claim.

If $\HmHaw(\M<\Hradius>)=\mass*$ for one $\Hradius$, then (by the above) $\mHaw(\M<s>,\foutg*)=\mass*_{\text{ADM}}(\outM,\foutg)$ for every $s\ge\Hradius$. In particular, the Hawking mass is constant along the inverse mean curvature flow starting at $\M<\Hradius>$ and therefore $(\outM,\foutg*)$ is outside of $\M<\Hradius>$ identical to the Schwarzschild solution, see~\cite{huisken2001inverse}. By \cite{brendle2013constant,nerz2015CMCfoliation}, this proves $\M<s>=\sphere^2_{\rradius(s)}(0)$ implying $\sc<s>\equiv\text{const}(s)$, $\fzFundtrf<s>\equiv0$, $\foutric(\nu,\nu)|_{\M<s>}\equiv\text{const}(s)$, $\foutric(\fnu<s>,\emptyarg)|_{T\M<s>}\equiv0$, and $\foutric|_{T\M<s>^2}=\text{const}(s)\:\fg<s>$ for every $s\ge\Hradius$. Revisiting the above equations for $\foutric$ \etc, this proves the last claim.
\end{proof}

\appendix
\section{Boosting a sphere}
As boosts are isometries of the hyperbolic space, it is quite obvious that if we have a one-parameter family $\outPhi[t]:\hyperbolicspace\to\hyperbolicspace$ of boosts of the hyperbolic space with $\outPhi[0]=\id_{\hyperbolicspace}$, then it maps $\houtg$-geodesic spheres to $\houtg$-geodesic spheres (of the same radius), \ie $\outPhi[t](\hsphere^2_\Hradius(\centerz[0]))=\hsphere^2(\centerz[t])$. In particular, $\hH[t]:=\hH(\outPhi[t](\hsphere^2_\Hradius(\centerz[0])))$ is a $t$-independent constant and therefore
\begin{align*}
 0 \equiv{}& \partial[t]@{\,\hH[t]} = \jacobiext*\houtg(\hnu[t],\partial[t]@{\,\outPhi[t]})
	= \laplace \houtg(\hnu[t],\partial[t]@{\,\outPhi[t]}) + (\outric(\nu,\nu)+\trtr{\zFund[t]}{\zFund[t]})\houtg(\hnu[t],\partial[t]@{\,\outPhi[t]}) \\
	={}& \laplace \houtg(\hnu[t],\partial[t]@{\,\outPhi[t]}) + \frac2{\sinh(\Hradius)^2}\houtg(\hnu[t],\partial[t]@{\,\outPhi[t]}),
\end{align*}
\ie $\houtg(\hnu[t],\partial[t]@{\,\outPhi[t]})$ is an eigenfunction of the (negative) Laplace operator with eigenvalue $2\,\sinh(\Hradius)^{{-}2}$.

In the proof of the main theorem, we need the exact reverse implication, \ie if $\outPhi[t]|_{\sphere^2}$ is a one parameter family of embeddings of $\sphere^2$ to the hyperbolic space such that at each time the lapse function is an eigenfunction of the (negative) Laplace operator with eigenvalue $8\pi\volume{\outPhi[t](\sphere^2)}^{{-}1}$ and $\outPhi[0](\sphere^2)$ is a geodesic sphere, then $\outPhi[t](\sphere^2)$ is always a sphere. Although this result is not new and the proof is quite straightforward, we give it for the sake of completeness. To adept to the setting of the main theorem, we allow the spheres to also be rescaled along $\outPhi[t]$.
\begin{lemma}[Deformations which are boostings]\labelth{Boosting_deformations}
Let $T>0$ be a constant and $\Phi:\interval*0*T\times\sphere^2\to\hyperbolicspace:(t,p)\mapsto\Phi(t,p)$ be a smooth deformation of a geodesic sphere which does not collapse. If the lapse function is the combination of rescalings and linearized boosts for all deformation indexes~$t$, then $\Phi(t,\sphere^2)$ is a geodesic sphere for all deformation indexes~$t$.

This means if $\Phi$ is smooth, $\Phi(0,\sphere^2)=\hsphere_r(\centerz)$ for some $r>0$ and $\centerz\in\hyperbolicspace$, $\Phi(\emptyarg,\sphere^2)$ does not collapse to a single point, and the lapse function $\hrnu<t>:=\houtg(\partial*_t\Phi,\hnu<t>)$ satisfies
\[ \forall\,t\in\interval*0*T\qquad \hlaplace<t>\:\rnu<t> = {-}\frac{8\pi}{\hvolume{\Phi(t,\sphere^2)}}(\hrnu<t>-\fint\rnu<t>\d\:\hmug<t>), \]
then
\[ \forall\,t\in\interval*0*T\quad\exists\,\centerz<t>\in\hyperbolicspace,\ r(t):=r+\int_0^t\fint\hrnu<t>\d\:\hmug<t>\d s\qquad\Phi(t,\sphere)=\hsphere_{r(t)}(\centerz<t>). \]
Here, $\hlaplace<t>$, $\hnu<t>$, and $\hmug<t>$ denote the Laplace operator, the outer unit normal, and the volume measure of $\Phi(t,\sphere^2)\hookrightarrow\hyperbolicspace$, respectively.
\end{lemma}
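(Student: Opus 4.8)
The statement is equivalent to the claim that every slice $\Sigma_t:=\Phi(t,\sphere^2)$ is totally umbilic in $\hyperbolicspace$, i.e. that the trace-free part $\mathring k_t$ of its second fundamental form (with respect to the hyperbolic metric) vanishes for all $t\in[0,T]$. Indeed, the Codazzi equations of the space form $\hyperbolicspace$ force the mean curvature of a totally umbilic surface to be constant, and a closed totally umbilic surface in $\hyperbolicspace$ can be neither a horosphere, an equidistant surface, nor a totally geodesic plane, hence it is a geodesic sphere. Its radius is then forced: for a family of geodesic spheres the lapse $u_t:=\houtg(\partial_t\Phi,\nu_t)$ splits into its mean $\fint u_t$, which equals $\partial_t(\text{radius})$, plus a mean-zero part which is the restriction of an infinitesimal hyperbolic translation and moves only the centre; integrating $\partial_t\rho(t)=\fint u_t$ gives $\rho(t)=r+\int_0^t\fint u_s\,d\mu_s\,ds=r(t)$, and $z(t)$ is just the centre of $\Sigma_t$. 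Since $\mathring k_0=0$, everything reduces to propagating $\mathring k_t\equiv0$.

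First I would reparametrize $\sphere^2$ so that $\partial_t\Phi=u_t\,\nu_t$ is purely normal --- possible because the slices do not collapse --- and invoke the evolution identity for the trace-free second fundamental form established in the proof of \refth{second_fundamental_form}, which in a general ambient manifold reads $\partial_t\mathring k_t=u_t\,\big(\overline{\mathrm{Ric}}+\tfrac12\overline{\mathrm{Ric}}(\nu_t,\nu_t)g_t-\tfrac12\overline S\,g_t-2\,\mathring k_t\odot\mathring k_t\big)^{\!\circ}+(\mathrm{Hess}_t\,u_t)^{\!\circ}$, with $g_t$ the induced metric. In $\hyperbolicspace$ one has $\overline{\mathrm{Ric}}=-2\,\houtg$ and $\overline S=-6$, so on $\mathrm T\Sigma_t$ the curvature bracket is $-2g_t-g_t+3g_t=0$; and in two dimensions $\mathring k_t\odot\mathring k_t$ is a multiple of $g_t$, so its trace-free part vanishes as well. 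Hence $\partial_t\mathring k_t=(\mathrm{Hess}_t\,u_t)^{\!\circ}$, the $\Sigma_t$-intrinsic trace-free Hessian of the lapse; if a tangential component $X_t$ is kept in $\partial_t\Phi$, the only extra term is $\mathfrak L_{X_t}\mathring k_t$, linear in $\mathring k_t$.

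Next I would exploit the eigenfunction hypothesis. Writing $u_t=\fint u_t+\phi_t$, the function $\phi_t$ is a Laplace eigenfunction on $\Sigma_t$ with eigenvalue $\lambda_t=8\pi\,\lvert\Sigma_t\rvert^{-1}$; the Bochner/commutation identity on the $2$-surface $\Sigma_t$ gives $\mathrm{div}_t\,(\mathrm{Hess}_t\,\phi_t)^{\!\circ}=(\kappa_t-\tfrac12\lambda_t)\,d\phi_t$ with $\kappa_t$ the Gauss curvature, and Gauss--Bonnet identifies $\tfrac12\lambda_t=4\pi\,\lvert\Sigma_t\rvert^{-1}=\fint_{\Sigma_t}\kappa_t$, so the coefficient is the mean-zero part $\kappa_t-\fint\kappa_t$. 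Since $\Sigma_t$ is a topological sphere it carries no nontrivial trace-free divergence-free symmetric $2$-tensor, so $\mathrm{div}_t$ is injective on trace-free symmetric $2$-tensors and elliptic estimates yield $\lVert(\mathrm{Hess}_t\,u_t)^{\!\circ}\rVert_{H^k(\Sigma_t)}\le C\,\lVert\kappa_t-\fint\kappa_t\rVert_{H^k(\Sigma_t)}$, with $C$ uniform over the precompact family $\{\Sigma_t\}_{t\in[0,T]}$. The Gauss equation $\kappa_t=-1+\tfrac14H_t^2-\tfrac12\lvert\mathring k_t\rvert^2$ then shows that $\kappa_t-\fint\kappa_t$ --- the constant $-1$ and all means dropping out --- is bounded by the ``non-roundness'' $\lVert\mathring k_t\rVert+\lVert H_t-\fint H_t\rVert$. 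Tracking $H_t$ in parallel, $\partial_tH_t=\Delta_t u_t+(\lvert k_t\rvert^2-2)u_t=-\lambda_t\phi_t+(\tfrac12H_t^2+\lvert\mathring k_t\rvert^2-2)u_t$, and Gauss--Bonnet again forces $\tfrac12\fint H_t^2-2-\lambda_t=\fint\lvert\mathring k_t\rvert^2$, so the two $\lambda_t$-contributions cancel in the mean-zero part and $H_t-\fint H_t$ also obeys an equation whose right-hand side is controlled by the non-roundness. Therefore $\mathcal E(t):=\lVert\mathring k_t\rVert_{H^k(\Sigma_t)}^2+\lVert H_t-\fint H_t\rVert_{H^k(\Sigma_t)}^2$ satisfies $\mathcal E'(t)\le C\,\mathcal E(t)$ as long as $\Sigma_t$ stays in a fixed neighbourhood of the geodesic spheres; since $\mathcal E(0)=0$, Gronwall gives $\mathcal E\equiv0$ on the maximal such interval, which --- the radii $r(t)$ remaining bounded on $[0,T]$ --- is all of $[0,T]$, so every $\Sigma_t$ is a geodesic sphere.

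The main obstacle, beyond the algebraic cancellations special to $\hyperbolicspace$ and to dimension two, is the uniform perturbative bound on $(\mathrm{Hess}_t\,u_t)^{\!\circ}$: that the trace-free Hessian of a Laplace eigenfunction of eigenvalue $8\pi\lvert\Sigma_t\rvert^{-1}$ on a nearly round $2$-sphere is as small as the surface's deviation from a geodesic sphere, with constants independent of $t$ --- this is what lets the Bochner identity, the absence of transverse-traceless tensors on $\sphere^2$, elliptic regularity, and the two uses of Gauss--Bonnet combine into a closed Gronwall estimate. An alternative, more geometric, organisation is available: by the computation preceding the Lemma, at every geodesic sphere the prescribed lapse --- mean part $\leftrightarrow$ infinitesimal rescaling, mean-zero eigenfunction part $\leftrightarrow$ infinitesimal boost --- is tangent to the $4$-dimensional manifold of geodesic spheres, so the deformation it drives cannot leave that manifold; but making ``cannot leave'' rigorous across the umbilic locus amounts to exactly the same Gronwall estimate, so the analytic content is unchanged.
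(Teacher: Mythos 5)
Your proposal is correct, but it takes a genuinely different route from the paper's. The paper never differentiates the trace-free second fundamental form: it computes the evolution of the mean curvature $H_t$, observes via Brendle's Alexandrov-type theorem that when $\Sigma_t$ is exactly a geodesic sphere of radius $r(t)$ the right-hand side reduces to the constant $2\sinh(r(t))^{-2}\fint u_t$, matches this with the derivative of $-2\cosh(r(t))/\sinh(r(t))$, and concludes that $H_t$ stays constant for all $t$ --- essentially a tangency-implies-invariance argument on the four-dimensional manifold of geodesic spheres --- followed by a short contradiction ruling out $r(t)\to 0$. Your proof instead propagates $\mathring k_t\equiv 0$ directly, using the evolution identity from the proof of \refth{second_fundamental_form}, the two-dimensional commutation identity $\operatorname{div}(\operatorname{Hess}\phi)^{\circ}=(\kappa-\tfrac{\lambda}{2})\nabla\phi$ for a Laplace eigenfunction, the vanishing of transverse-traceless tensors on $\sphere^2$, the two Gauss--Bonnet cancellations, and a Gronwall inequality. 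The trade-off is instructive: the paper's version is much shorter, but the passage from ``the velocity is correct at round configurations and $\Sigma_0$ is round'' to ``$\Sigma_t$ stays round'' is not literally an ODE-uniqueness step, since the displayed velocity was computed only at configurations where $\mathring k_t$ already vanishes; what is implicitly needed is precisely the Lipschitz-in-non-roundness bound that your perturbative estimate on $(\operatorname{Hess}\,u_t)^{\circ}$ supplies. You correctly observe this in your closing paragraph. So your argument is a detailed and more self-contained version of the same invariance claim, assembled from a Bochner identity rather than from Brendle's black box, at the cost of a longer elliptic/Gronwall chain that the paper leaves at the level of intuition.
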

\begin{proof}
Let us first assume that $r(t):=r+\int_0^t\fint\rnu<t>\d\:\mug<t>>0$ for every $t\in\interval*0*T$. Using the definition of the stability operator, we know
\begin{equation*}
 \partial[t]@{\:\hH<t>}
 = \frac12(\hH<t>^2 - 4 + 2\vert\hzFundtrf<t>\vert_{\hg<t>}^2)\hrnu<t> - \frac{8\pi}{\hvolume{\Phi(t,\sphere^2)}}\hrnu<t>.
\end{equation*}
Furthermore, $\Phi(t,\sphere^2)$ is a extrinsic round sphere, \ie $\zFundtrf\equiv0$, if and only if $\hH<t>\equiv{-}2\frac{\cosh(R)}{\sinh(R)}$ is constant for some $R>0$ and then $\hvolume{\Phi(t,\sphere^2)}=4\pi\sinh(R)^2$, see \cite{brendle2013constant}. Therefore, if $\hH<t>\equiv{-}2\,\frac{\cosh(r(t))}{\sinh(r(t))}$ for some $t\in\interval*0*T$, then
\begin{equation*}
 \partial[t]@{\:\hH<t>}
 \equiv 2\,\sinh(r(t))^{{-}2}\,\fint\rnu<t>\d\:\hmug<t>
 = \partial[t]({-}2\frac{\cosh(r(t))}{\sinh(r(t))}),
\end{equation*}
for this $t$. With $\hH<0>\equiv{-}2\,\frac{\cosh(r(0))}{\sinh(r(0))}$, this implies that $\hH<t>\equiv{-}2\,\frac{\cosh(r(t))}{\sinh(r(t))}$ for all $t\in\interval*0*T$. Again by \cite{brendle2013constant}, this proves the claim.

Now, assume that $T'\in\interval0*T$ is minimal with $r(T')=0$. By the above, $\Phi(t,\sphere^2)=\hsphere^2_{r(t)}(\centerz<t>)$ for some $\centerz<t>$ for every $t<T'$. Furthermore, $r(t)\to0$ for $t\to T'$  and therefore the diameter of $\Phi(t,\sphere^2)$ goes to $0$ as $t\to T'$ which contradicts the assumption that $\Phi$ is  non-collapsing.
\end{proof}%

\clearpage																							%
\bibliography{bib}																			%
\makeatletter%																					%
\def\bibindent{10em}%																		%
\let\old@biblabel\@biblabel%														%
\def\@biblabel#1{\old@biblabel{#1}\kern\bibindent}%			%
\makeatother%																						%
\vfill\bibliographystyle{alpha}\vfill%									%
\end{document}